\def\leq {\leqslant}
\def\le {\leqslant}
\def\ge {\geqslant}
\def\geq {\geqslant}
\newcommand{\sgn}{\mathop{\rm sgn}\nolimits}
\DeclareMathOperator{\sign}{sign}
\DeclareMathOperator{\sinc}{sinc}
\DeclareMathOperator{\dist}{dist}
\DeclareMathOperator{\supp}{supp}
\newcommand{\norm}[1]{ \left\lVert#1\right\rVert}
\providecommand{\U}[1]{\protect\rule{.1in}{.1in}}
\theoremstyle{plain}
\newtheorem{theorem}{Theorem}[section]
\newtheorem{lemma}[theorem]{Lemma}%[theorem]
\newtheorem{definition}[theorem]{Definition}
\newtheorem{remark}[theorem]{Remark}
\newtheorem{proposition}[theorem]{Proposition}
\numberwithin{equation}{section}
\newtheorem{corollary}[theorem]{Corollary}
\newcommand\R{\mathbb{R}}
\newcommand\N{\mathbb{N}}
\newcommand\Z{\mathbb{Z}}
\newcommand\g{{\gamma}}
\newcommand\be{{\beta}}
\newcommand\sa{{\sigma}}
\newcommand\la{{\lambda}}
\newcommand\iy{{\infty}}
\newcommand\TT{\mathcal{T}}
\newcommand{\vphi}{{\varphi}}
\newcommand{\bna}{\begin{eqnarray}}
\newcommand{\ena}{\end{eqnarray}}
\newcommand{\ba}{\begin{eqnarray*}}
\newcommand{\ea}{\end{eqnarray*}}
\newcommand{\beq}{\begin{equation}}
\newcommand{\eeq}{\end{equation}}
\def\R{{\Bbb R}}
\def\leq {\leqslant}
\def\le {\leqslant}
\def\ge {\geqslant}
\def\geq {\geqslant}
\def\R {\mathbb R}
\def\Z {\mathbb Z}
\def \N{\mathbb N}
\numberwithin{equation}{section}
\begin{document}
\title[Bernstein--Nikolskii inequalities]
{%On sharp constants in Bernstein--Nikolskii inequalities
Bernstein--Nikolskii inequalities: \\optimality with respect to the smoothness parameter}

\author{Michael I. Ganzburg}
\address{M. I. Ganzburg, 212 Woodburn Drive, Hampton, VA 23664 }
\email{michael.ganzburg@gmail.com}

\author{Miquel Saucedo}
\address{M. Saucedo,
 Centre de Recerca Matem\`{a}tica\\
Campus de Bellaterra, Edifici C
08193 Bellaterra (Barcelona), Spain;
}

\email{miquelsaucedo98@gmail.com}

\author{Sergey Tikhonov}
\address{S. Tikhonov,
ICREA, Pg. Llu\'{i}s Companys 23, 08010 Barcelona, Spain\\
Centre de Recerca Matem\`{a}tica,
Campus de Bellaterra, Edifici C
08193 Bellaterra (Barcelona), Spain,
 and Universitat Aut\`{o}noma de Barcelona,
 Edifici C
08193 Bellaterra (Barcelona), Spain}
\email{ stikhonov@crm.cat}

\subjclass[2010]{Primary  41A17, Secondary 42A05}
\keywords{Berstein--Nikolskii inequality, trigonometric polynomial, entire function of exponential type,
discrete Hardy space, $H_p$-atom}
\thanks{M. Saucedo is supported by  the Spanish Ministry of Universities through the FPU contract FPU21/04230.
 S. Tikhonov is supported by PID2023-150984NB-I00 and 2021 SGR 00087. This work is supported by
the CERCA Programme of the Generalitat de Catalunya, and the Severo Ochoa and Mar\'ia de Maeztu
Program for Centers and Units of Excellence in R\&D (CEX2020-001084-M)
}

\begin{abstract}
%Both Bernstein and Nikolskii inequalities are fundamental results in pure and applied harmonic analysis.
In this paper, we study the form of the constant $C$ in the Bernstein--Nikolskii inequalities
$\|f^{(s)}\|_q
\lesssim C(s, p, q)\left\|f\right\|_p,\,0<p<q \leq\infty$,
for trigonometric polynomials and entire functions of exponential type.
We obtain the optimal behavior of the constant with respect to the smoothness parameter $s$.
\end{abstract}

\maketitle

\footnotetext{Dedicated to Karlheinz Gr\"{o}chenig on the occasion of his 65th birthday.}

\section{Introduction}
\noindent

\subsection{Notation and Preliminaries}\label{S1.1}
Let us first define the $L_p$- and $\ell_p$-quasinorms
\ba
\begin{array}{lll}
\|f\|_p:=\left(\int_\R\vert f(x)\vert^pdx\right)^{1/p},
&\|f\|_p^*:=\left(\int_{-\pi}^\pi\vert f(x)\vert^pdx\right)^{1/p},
&p\in(0,\iy];\\
 \|a\|_{\ell_p(\Z)}:=\left(\sum_{k\in\Z}\left\vert a_k\right\vert^p\right)^{1/p},
 &\|\la\|_{\ell_p(\Z_+)}:=\left(\sum_{d=0}^\iy\left\vert \la_d\right\vert^p\right)^{1/p},
 &p\in(0,\iy].
\end{array}
\ea
The Fourier transform of an integrable function $\phi$ and its inverse Fourier transform are defined by
the standard formulae
\ba
\widehat{\phi}(\xi):=\int_\R\phi(x)e^{-ix\xi}dx,\qquad
\Check{\phi}(\xi):=\frac{1}{2\pi}\int_\R\phi(x)e^{ix\xi}dx.
\ea

Next, let $\TT_n$ be the space of all trigonometric polynomials
$T_n(x)=\sum_{k=-n}^nc_ke^{ikx}$ of degree at most $n$  with complex coefficients.
We also use  the Dirichlet kernel $D_N(x):=1+2\sum_{k=1}^N \cos k x$ of order $N\in\N$
on several occasions.

In addition, let $E_{\sa,p}$ be the Paley--Wiener space of all complex-valued entire functions
$f$ of exponential type
$\sa>0$ with $\|f\|_{p}<\iy$
(recall that an entire complex-valued function $f$ is of exponential type $\sa$ if for any $\varepsilon>0$ there exists
a constant $C_{\varepsilon,f}$ such that $|f(z)|\leq C_{\varepsilon,f} e^{(\sa+\varepsilon)|z|}$ for all complex $z$).
It is well known that $E_{\sa,p}\subset E_{\sa,q}$, if $0<p< q\le\iy$,
    and $E_{\sa,p}$
    is a Banach space for $p\in[1,\iy]$ and a quasi-Banach space for $p\in(0,1)$.
    Throughout the paper, we use the notation $E_p:=E_{1,p}$.

    In the paper, we often use without referencing
    the following fundamental result by Paley and Wiener:
    $f \in E_2$ if and only if there exists $\psi$ with
     $\norm{\psi}_2<\iy$ such that $\psi$ is supported in $[-1,1]$
     and $f(\xi)=\widehat{\psi}(\xi)$.

    The sinc function $\sinc x:=\sin x/x$,
    the floor function $\lfloor x\rfloor$,
     the ceiling function $\lceil x\rceil$,
      the gamma function $\Gamma(z)$,
     and the beta function
 \beq\label{E3.4a}
     B(z_1,z_2)
     :=\int_0 ^1 t^{z_1-1} (1-t)^{z_2-1} dt
      = \frac{\Gamma(z_1) \Gamma(z_2)}{\Gamma(z_1+z_2)},
     \qquad z_1>0,\quad z_2>0,
 \eeq
 are used as well.
 The following asymptotic formula for the quotient of the gamma functions is well known
 (see \cite[Sect. 1.18, Eqn. (5)]{EMOT1953}):
 \beq\label{E3.4b}
 \lim_{z\to \iy}z^{b-a}\frac{\Gamma(z+a)}{\Gamma(z+b)}=1,\qquad z>0,\quad a,b\in\R.
 \eeq

We also use the notations
$\lesssim,\,\gtrsim$, and $\asymp$ in the following sense:
$%\ba
\vphi(\tau,\gamma,\ldots)\lesssim \delta(\tau,\gamma,\ldots)$,
$\vphi(\tau,\gamma,\ldots)\gtrsim \delta(\tau,\gamma,\ldots),$
and
%\quad \vphi(\tau,\gamma,\ldots)\gtrsim \delta(\tau,\gamma,\ldots),\quad
 $\vphi(\tau,\gamma,\ldots)\asymp \delta(\tau,\gamma,\ldots)$
mean that there exist constants $C_1>0$ and/or $C_2>0$ independent of the essential parameters
$\tau,\,\gamma,\ldots$ such that
$\vphi(\tau,\gamma,\ldots)\leq C_1 \delta(\tau,\gamma,\ldots),\,
\vphi(\tau,\gamma,\ldots)\geq C_2 \delta(\tau,\gamma,\ldots)$,
and $C_2\delta(\tau,\gamma,\ldots)\le
\vphi(\tau,\gamma,\ldots)\le C_1\delta(\tau,\gamma,\ldots),$ respectively, for the relevant ranges of $\tau,\gamma,\ldots$.
%$$
%\vphi(\tau,\gamma,\ldots)\le C_1 \delta(\tau,\gamma,\ldots),\qquad
%\vphi(\tau,\gamma,\ldots)\ge C_2\delta(\tau,\gamma,\ldots),
%C_2\delta(\tau,\gamma,\ldots)\le
%\vphi(\tau,\gamma,\ldots)\le C_1\delta(\tau,\gamma,\ldots),
%$$
%\ba
%&&\vphi(\tau,\gamma,\ldots)\le C_1 \delta(\tau,\gamma,\ldots),\qquad
%\vphi(\tau,\gamma,\ldots)\ge C_2\delta(\tau,\gamma,\ldots),
%\nonumber\\
%&&C_2\delta(\tau,\gamma,\ldots)\le
%\vphi(\tau,\gamma,\ldots)\le C_1\delta(\tau,\gamma,\ldots),
%\ea
%respectively, for the relevant ranges of $\tau,\gamma,\ldots$.
The dependence of the constants on certain parameters is indicated
by using subscripts, e.g., $\lesssim_{p,q}$ or $\asymp_{p,q}$. The absence of the subscripts means that
$C_1$ and\slash{or} $C_2$ are absolute constants.

\subsection{Bernstein--Nikolskii Inequalities}\label{S1.2}
The following
Bernstein--Nikolskii inequalities:
\bna
&&\left\|T_n^{(s)}\right\|^*_q
\lesssim_{s,p,q} n^{s+1/p-1/q}\left\|T_n\right\|^*_p,\qquad T_n\in\TT_n;\label{E1.1}\\
&&\left\|f^{(s)}\right\|_q
\lesssim_{s,p,q} \sa^{s+1/p-1/q}\left\|f\right\|_p,\qquad f\in E_{\sa,p};\label{E1.2}
\ena
for $0<p\le q\le\iy$ and $s=0,\,1,\ldots$,
have been known for more than sixty years.

Both \eqref{E1.1} and \eqref{E1.2} are fundamental results in pure and applied harmonic analysis.
The applications include various topics in approximation theory,  function analysis, and PDE's.
%It is well known that the {\it optimal}  Bernstein--Nikolskii estimates  are crucial for numerous applications in sampling and recovery problems [],
% embeddings of function spaces [], linear and nonlinear approximation [],  to mention some.

Various estimates and asymptotics for the constants in \eqref{E1.1} and \eqref{E1.2}
were discussed in numerous publications (see, e.g., \cite{G2005, DT2005, SG2015,
LL2015, GT2017,
ca2019,
BH2020, G2021, BCOS2022}
and references therein).
In particular, the sharp constants in  \eqref{E1.1} and \eqref{E1.2} are known for
$q=\iy$ and $p=2$ (see \cite[Eqns. (1.5) and (1.13)]{GT2017}).
The first and third authors \cite{GT2017} discussed limit relations between the sharp constants
in  \eqref{E1.1} and \eqref{E1.2}.
Bang and Huy \cite[Theorem 2.1]{BH2020} studied the influence of a large $s$ on the constant in \eqref{E1.2}.
%Simonov and Glazyrina \cite[Theorem 1]{SG2015} found the sharp constant in  \eqref{E1.1}
%for $q=\iy$ and $p=1$ but
%the problem of finding its asymptotic behavior as $n\to\iy$ is still open.
Simonov and Glazyrina \cite[Theorem 1]{SG2015} studied the sharp constant in the closely related to \eqref{E1.1} Markov-Nikolskii inequality  for $q=\iy$ and $p=1$.
In the case $s=0$ and $p<q=\infty$, estimate  \eqref{E1.2}  was recently investigated in \cite{LL2015,ca2019, BCOS2022}.

Importantly,  the problem of finding the asymptotic behavior of  the constant in \eqref{E1.1} as $n\to\iy$ remains open.

In this paper, we obtain new estimates of the constants in \eqref{E1.1} and \eqref{E1.2}
that cannot be improved on the spaces $\TT_n$ and $E_{\sa,p}$
with respect to the parameter $s\in\N$.

\subsection{Main Results}\label{S1.3}
\begin{theorem}\label{T1.1} For every $n,s\in\mathbb{N}$ and $0< p<q\leq \infty$, %or $0<p<1\leq q=\infty$,
 the following relation holds:
 \beq\label{E1.3}
\sup_{\substack{T_n\in\mathcal{T}_n,\\ \norm{T_n}_p^*=1}}\norm{ T_n^{(s)}}_q^* \asymp_{p,q} n^s \left(1+ \left({n}/{s}\right)^{{1}/{p}-{1}/{q}}\right).
\eeq
\end{theorem}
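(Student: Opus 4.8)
The plan is to establish the two inequalities separately and to treat the two regimes $s\lesssim n$ and $s\gtrsim n$ differently, since the right‑hand side $n^s(1+(n/s)^{1/p-1/q})$ behaves like $n^{s+1/p-1/q}s^{1/q-1/p}$ when $s\le n$ and like $n^s$ when $s\ge n$. For the upper bound I would first dispose of the regime $s\ge n$: here $T_n^{(s)}$ is again a trigonometric polynomial of degree $n$, and iterating the classical Bernstein inequality in $L_q^*$ together with the embedding $\|\cdot\|_p^*\gtrsim_{p,q}\|\cdot\|_q^*$ on $\TT_n$ with its standard $n^{1/p-1/q}$ loss would give $\|T_n^{(s)}\|_q^*\lesssim n^s\|T_n\|_q^*\lesssim n^{s}n^{1/p-1/q}\|T_n\|_p^*$; but since $s\ge n$ we have $n^{1/p-1/q}\asymp_{p,q}(n/s)^{1/p-1/q}\cdot s^{1/p-1/q}\le$\dots — more carefully, one wants the bound $n^s\|T_n\|_p^*$ in this range, which follows by writing $T_n^{(s)}=(T_n^{(s-n)})^{(n)}$ where $T_n^{(s-n)}$ is still in $\TT_n$ and applying the $s=0$, $p<q$ Nikolskii inequality followed by $n$ Bernstein steps; the point is that in this range the factor $(n/s)^{1/p-1/q}\le1$ so the claimed bound is just $\asymp n^s$. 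For the main regime $s\le n$, I would use the representation $T_n^{(s)}(x)=(T_n*K)(x)$ where $K$ has Fourier coefficients $(ik)^s$ for $|k|\le n$; the sharp constant should come from estimating the multiplier, and the natural tool is the Nikolskii inequality $\|T_n^{(s)}\|_q^*\lesssim_{p,q} n^{1/p-1/q}\|T_n^{(s)}\|_p^*$ combined with a refined Bernstein‑type estimate $\|T_n^{(s)}\|_p^*\lesssim_p (n^s/s^{1/p})\cdot$ (something), or, more robustly, a direct construction via the Fejér‑type kernel. Actually the cleanest route is: prove $\|T_n^{(s)}\|_p^*\lesssim_p n^{s}\|T_n\|_p^*$ (iterated Bernstein, valid in all $L_p$, $p>0$) and separately the sharper gain when we pass to $L_q$.

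For the upper bound in the regime $s\le n$, here is the key idea I would push: split the frequency range $|k|\le n$ dyadically or, better, use the fact that $e^{ikx}$ for $|k|\ge cn$ carries most of the derivative's mass while the low frequencies $|k|\le s$ contribute a milder factor. Concretely, write $T_n=P+Q$ where $P$ collects frequencies $|k|\le n/2$ and $Q$ the rest; for $Q$ one has $\|Q^{(s)}\|_q^*\asymp n^s\|Q\|_q^*\lesssim_{p,q} n^s n^{1/p-1/q}\|Q\|_p^*$, which is already too big unless $s\asymp n$ — so this crude split does not suffice and I would instead invoke the known sharp asymptotics of the $L_p\to L_q$ norm of the derivative operator restricted to a single‑frequency block. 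The sharp tool is the Nikolskii constant for the specific operator $D^s$: one shows, using the extremal near the $\sinc$‑type / de la Vallée‑Poussin kernel, that $\sup_{\|T_n\|_p^*=1}\|T_n^{(s)}\|_q^*$ is comparable to $\|K_{n,s}\|_r^*$ where $1/r=1+1/q-1/p$ is the dual exponent and $K_{n,s}(x)=\sum_{|k|\le n}(ik)^s e^{ikx}$, by Young/Hausdorff–Young‑type convolution estimates when $p\ge1$, and by $H_p$‑atomic decomposition (as the keywords of the paper suggest) when $p<1$. The estimate $\|K_{n,s}\|_r^*\asymp n^s(n/s)^{1/p-1/q}$ for $s\le n$ is then a direct, if delicate, kernel computation: the kernel has a peak of height $\asymp n^{s+1}$ over an interval of length $\asymp 1/n$ but oscillates, and careful stationary‑phase / summation‑by‑parts bookkeeping pins down the $(n/s)^{1/p-1/q}$ factor.

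For the lower bound I would exhibit explicit extremal polynomials. In the regime $s\ge n$ take $T_n(x)=e^{inx}$ (or a Fejér kernel), for which $\|T_n^{(s)}\|_q^*/\|T_n\|_p^*\asymp_{p,q} n^s$ directly, matching $n^s\cdot1$. In the regime $s\le n$ the candidate is a modulated de la Vallée‑Poussin kernel or, following \cite{GT2017}, the function built from $\h\psi$ with $\psi$ supported near $\pm1$ at scale $s/n$: one takes $T_n(x)=\sum_{|k|\le n}\h\chi(k s/n)\,e^{ikx}$ for a fixed bump $\chi$, computes that $\|T_n\|_p^*\asymp (n/s)^{1-1/p}$ and $\|T_n^{(s)}\|_q^*\asymp n^{s}(n/s)^{1-1/q}$ (the $s$‑th derivative concentrates the mass and shifts the exponent), and divides. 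The main obstacle is precisely matching the lower‑bound construction to the upper bound with the correct power $1/p-1/q$ of $n/s$ for all $0<p<q\le\infty$ simultaneously, including the quasi‑Banach range $p<1$ where one cannot use duality and must argue through the $H_p$ machinery and direct pointwise estimates on the extremal kernel; I expect that to be the technically heaviest part of the argument, while the structural split into the two regimes and the reduction to a kernel norm are routine.
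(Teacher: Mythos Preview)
Your plan has the right architecture --- kernel/convolution estimate for the upper bound, $H_p$-atoms for $p<1$, explicit extremals for the lower bound --- and this matches the paper. But two pieces are broken or missing.

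The lower-bound extremal is misplaced. As written, $T_n(x)=\sum_{|k|\le n}\widehat{\chi}(ks/n)e^{ikx}$ has its Fourier mass near $k=0$ (your own estimate $\norm{T_n}_p^*\asymp(n/s)^{1-1/p}$ confirms this), so differentiating $s$ times multiplies the dominant coefficients by $|k|^s\lesssim(n/s)^s$, not $n^s$; the claim $\norm{T_n^{(s)}}_q^*\asymp n^s(n/s)^{1-1/q}$ is false for this polynomial. You need the bump near $k=n$. The paper (Lemma~\ref{L2.2}) takes a Jackson kernel $J_{r,N}$ with $N\asymp n/s$ --- the power $r$ chosen minimal with $rp>1$, so that $\norm{\cdot}_p^*<\infty$ even for $p<1$ --- and modulates it by $e^{i(n-rN)x}$, shifting the spectrum to $[n-2rN,n]$; then the derivative produces the $n^s$ factor and the computation closes. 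Your verbal hint ``$\psi$ supported near $\pm1$'' suggests you sensed this, but the formula you wrote does not implement it.

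For $p\in(0,1)$ the upper bound is where the real work lies, and ``$H_p$-atomic decomposition'' is not yet a plan. The paper's route is indirect: it first proves the entire-function inequality $\int_{-1}^1|\xi|^s|\widehat{f}(\xi)|\,d\xi\lesssim_p s^{-1/p}\norm{f}_p$ for $f\in E_p$ by passing through the isomorphism $E_p\cong H_p(\Z)$, reducing to a single atom, and computing directly (Lemmas~\ref{L3.6} and \ref{L3.5}); only then does it transfer back to $\TT_n$ by embedding $T_n$ into $E_p$ via a bump placed at each Fourier coefficient, with two separate constructions depending on whether $2n+1\ge s$ or $2n+1<s$ (Lemma~\ref{L3.4}). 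Neither the detour through $E_p$ nor this discretization step is in your outline, and they are not routine. Your regime-splitting attempt for $s\ge n$ via iterated Bernstein plus Nikolskii also fails as stated --- any ordering of those steps lands at $n^{s+1/p-1/q}$, not $n^s$ --- but this is harmless once the uniform kernel estimate $\norm{D_n^{(s)}}_r^*\lesssim_r n^s(1+(n/s)^{1-1/r})$ (Lemma~\ref{L3.1}, via Hardy--Littlewood for monotone coefficients and a Beta-function bound) is in hand for $p\ge1$ and the atom argument for $p\le1$: the paper never splits into regimes at all.
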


\begin{theorem}\label{T1.2} For every $s\in\mathbb{N}$ and $0< p<q\leq \infty$, %or $0<p<1\leq q=\infty$,
 the following relation holds:
 \beq\label{E1.4}
\sup_{\substack{f\in E_{\sa,p},\\ \norm{f}_p=1}}\norm{ f^{(s)}}_q \asymp_{p,q}  \sa^s(\sa/ s)^{{1}/{p}-{1}/{q}}.
\eeq
\end{theorem}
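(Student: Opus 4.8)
\emph{Overview and scaling.} The plan is to normalize the type to $\sa=1$ and then prove matching lower and upper bounds for $B(s,p,q):=\sup\{\norm{f^{(s)}}_q:\ f\in E_{1,p},\ \norm f_p=1\}$. If $f\in E_{\sa,p}$ and $g(x):=f(x/\sa)$, then $g\in E_{1,p}$, $\norm g_p=\sa^{1/p}\norm f_p$ and $\norm{g^{(s)}}_q=\sa^{1/q-s}\norm{f^{(s)}}_q$, so the supremum in \eqref{E1.4} over $E_{\sa,p}$ equals $\sa^{s+1/p-1/q}B(s,p,q)$; since $\sa^{s+1/p-1/q}s^{1/q-1/p}=\sa^s(\sa/s)^{1/p-1/q}$, the theorem reduces to proving $B(s,p,q)\asymp_{p,q}s^{1/q-1/p}$ for $s\in\N$, $0<p<q\le\iy$.

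\emph{Lower bound.} I would fix a nonnegative $\chi\in C_c^\iy(\R)$, $\chi\not\equiv0$, with $\supp\chi\subset[-\tfrac12,\tfrac12]$, and for $s\ge2$ take $f_s$ with $\widehat{f_s}(\xi)=\chi(s(\xi-\omega_s))$, $\omega_s=1-\tfrac1{2s}$, so that $\supp\widehat{f_s}\subset[1-\tfrac1s,1]\subset[-1,1]$, i.e. $f_s\in E_{1,p}$. A change of variable gives $f_s(x)=s^{-1}e^{i\omega_s x}\psi(x/s)$ with $\psi$ a fixed Schwartz function, hence $\norm{f_s}_p\asymp_p s^{1/p-1}$. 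On $\supp\widehat{f_s}$ one has $|(i\xi)^s|=\xi^s\asymp1$, and since $(\omega_s+u/s)^s\to e^{u-1/2}$ uniformly on $\supp\chi$, the same substitution applied to $\widehat{f_s^{(s)}}(\xi)=(i\xi)^s\chi(s(\xi-\omega_s))$ yields $f_s^{(s)}(x)=s^{-1}e^{i\omega_s x}\big(\Psi(x/s)+o(1)\big)$ with $\Psi$ a fixed Schwartz function, $\Psi(0)\ne0$, the $o(1)$ uniform in $x$; bounding the $L_q$-norm from below on $|x|\le s$ gives $\norm{f_s^{(s)}}_q\gtrsim_q s^{1/q-1}$ for large $s$, so $\norm{f_s^{(s)}}_q/\norm{f_s}_p\gtrsim_{p,q}s^{1/q-1/p}$. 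The finitely many small $s$ are covered by \eqref{E1.2}. (Dilating the extremal polynomial from Theorem~\ref{T1.1} is an alternative source of such near-extremizers.)

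\emph{Upper bound: reduction to $q=\iy$, and the case $p\ge1$.} For $p<q<\iy$ one has $\norm{f^{(s)}}_q^q\le\norm{f^{(s)}}_\iy^{\,q-p}\norm{f^{(s)}}_p^{\,p}$, and Bernstein's inequality (valid for all $0<p\le\iy$) gives $\norm{f^{(s)}}_p\le\norm f_p$; so it is enough to prove $\norm{f^{(s)}}_\iy\lesssim_p s^{-1/p}\norm f_p$ for $f\in E_{1,p}$, $0<p<\iy$. Let $K_s$ be defined by $\widehat{K_s}(\xi)=(i\xi)^s\mathbf 1_{[-1,1]}(\xi)$ (a constant multiple of the $s$-th derivative of $\sinc$); since $\supp\widehat f\subseteq[-1,1]$, the reproducing identity $f^{(s)}=f*K_s$ holds for every $f\in E_1$. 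Two integrations by parts in $K_s(x)=\tfrac1{2\pi}\int_{-1}^1(i\xi)^se^{ix\xi}\,d\xi$ give the kernel bound $|K_s(x)|\lesssim\min\{s^{-1},|x|^{-1}\}$, whence $\norm{K_s}_{p'}\asymp s^{-1/p}$ for $p\in[1,\iy)$. For $1\le p<\iy$, Hölder's inequality then yields $\norm{f^{(s)}}_\iy\le\norm f_p\,\norm{K_s}_{p'}\lesssim_p s^{-1/p}\norm f_p$, settling this range. (For $1\le p\le2$ one may instead pair $(i\xi)^s$ directly against $\widehat f$: by Hausdorff–Young, $\norm{f^{(s)}}_\iy\lesssim\norm{\xi^s\mathbf 1_{[-1,1]}}_p\,\norm{\widehat f}_{p'}\lesssim_p s^{-1/p}\norm f_p$.)

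\emph{The main obstacle: $0<p<1$.} Here $K_s\notin L_{p'}$ and Hölder is unavailable, and this is the technical heart of the theorem. The feature to exploit is that $\widehat{f^{(s)}}=(i\xi)^s\widehat f$ vanishes to order $s$ at the origin, so that for $s>1/p-1$ the function $f^{(s)}$ carries enough vanishing moments to be decomposed into $H_p$-atoms — the $\R$-analogue of the discrete Hardy space / $H_p$-atom machinery used for Theorem~\ref{T1.1}: one represents $f$ by a sampling (cardinal) series, transfers the Plancherel–Pólya bound $\sum_{n}|f(n)|^p\lesssim_p\norm f_p^p$, and estimates the contribution of each atom of $f^{(s)}$ against $K_s$ through the kernel bound $|K_s(x)|\lesssim\min\{s^{-1},|x|^{-1}\}$; summing the atomic estimates produces the gain $s^{-1/p}$. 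A shorter route to the whole theorem is to deduce Theorem~\ref{T1.2} from Theorem~\ref{T1.1}: periodizing $f\in E_{1,p}$ on $[-\pi R,\pi R]$ gives trigonometric polynomials of degree $\asymp R$ with asymptotically the same normalized quantities, so the sharp constant in \eqref{E1.2} is $\lim_{n\to\iy}n^{-(s+1/p-1/q)}$ times that in \eqref{E1.1} (cf.\ \cite{GT2017}); inserting Theorem~\ref{T1.1} gives $n^{-(s+1/p-1/q)}n^s\big(1+(n/s)^{1/p-1/q}\big)=n^{1/q-1/p}+s^{1/q-1/p}\to s^{1/q-1/p}$, which is \eqref{E1.4} after undoing the scaling. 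I expect everything except the $0<p<1$ upper bound — equivalently the continuous $H_p$ estimate, or the transfer from Theorem~\ref{T1.1} — to be routine.
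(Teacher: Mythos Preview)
Your proposal is correct and aligns with the paper's proof: the lower bound via a bump localized near $\xi=1$ at scale $1/s$ is exactly Lemma~\ref{L2.1}, and your ``shorter route'' for the upper bound---transferring the $\lesssim$ part of Theorem~\ref{T1.1} through the result of \cite{GT2017} (restated as Lemma~\ref{L5.1}; note it is only a $\liminf$ inequality, which is all that is needed)---is precisely the paper's argument. Your direct kernel bound $\|K_s\|_{p'}\lesssim_p s^{-1/p}$ for $p\ge1$ is a legitimate continuous analogue of Lemma~\ref{L3.1}/Corollary~\ref{C3.2}; your $H_p$-atom sketch for $p<1$ is slightly misaligned with what the paper actually does (it is $f$, not $f^{(s)}$, that is atomically decomposed via its samples, and one then estimates $\int_{-1}^1|\xi|^s|\widehat f(\xi)|\,d\xi$ atom by atom as in Lemma~\ref{L3.6}), but since you also supply the transfer route this does not affect correctness.
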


For a certain subset of $\TT_n$, relation \eqref{E1.3}
for $q=\iy$ and $p=1$ can be improved.

\begin{definition}\label{D1.3}
     We say that a finite sequence of numbers $c=(c_0, \dots, c_{n})$ is concave if  $c_0\geq c_1 \geq \dots \geq c_n \geq  c_{n+1}:=0$ and
 $c_n-c_{n+1} =:\Delta c_n \geq \Delta c_{n-1} \geq \dots \geq \Delta c_0=c_0-c_1$.
 \end{definition}

 The next theorem shows that by restricting ourselves just to concave sequences of coefficients, we come to the better behavior in $s$ and $n$ on the right-hand side of \eqref{E1.3}.
 Let $\TT_{n,\mbox{c}}\subset\TT_{n}$ be the set of all trigonometric polynomials $T_n(x)=c_0+\sum_{k=1}^n 2c_k \cos{kx}$
 with a concave sequence $c$.
\begin{theorem}\label{T1.4}
   For every $n, s\in\mathbb{N}$, %or $0<p<1\leq q=\infty$,
 the following relation holds:
 \beq\label{E1.5}
\sup_{\substack{T_n\in\mathcal{T}_{n,\mbox{c}},\\ \norm{T_n}_1^*=1}}\norm{ T_n^{(s)}}_\iy^* \asymp \left(n^s+\frac{n^{s+1}}{s+1} \right)\left(\frac1{\log{(s+2)}}+ \frac1{\log(n+2)}\right).
\eeq
\end{theorem}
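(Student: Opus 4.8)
The plan is to parametrise $\TT_{n,\mathrm c}$ by a double Abel (summation‑by‑parts) transform, reduce everything to two explicit kernel estimates, and then optimise. Observe that $T_n\in\TT_{n,\mathrm c}$ exactly when $T_n=\sum_{j=0}^n\mu_j G_j$ with all $\mu_j\ge0$, where $G_j:=\sum_{k=j}^n D_k$. Indeed, Definition~\ref{D1.3} says precisely that the forward differences $\Delta c_k=c_k-c_{k+1}$ (with $c_{n+1}=0$) form a nonnegative nondecreasing sequence on $\{0,\dots,n\}$; Abel's identity gives $T_n=\sum_{k=0}^n\Delta c_k D_k$, and a second summation by parts against the nondecreasing weights $\Delta c_k$ yields $T_n=\sum_{j=0}^n\mu_j G_j$ with $\mu_j=\Delta c_j-\Delta c_{j-1}\ge0$ (set $\Delta c_{-1}:=0$); conversely every such combination lies in $\TT_{n,\mathrm c}$. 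Using $\sum_{k=0}^N D_k=(N+1)F_N$ with the Fej\'er kernel $F_N=\frac1{N+1}\sum_{k=0}^N D_k$, one gets the closed form
\[
G_j(x)=\frac{\cos jx-\cos((n+1)x)}{2\sin^2(x/2)}=\frac{\sin(a_jx)\,\sin(b_jx)}{\sin^2(x/2)},\qquad a_j=\tfrac{n+1+j}{2},\ b_j=\tfrac{n+1-j}{2},
\]
whose cosine coefficients are $2\bigl(n+1-\max(j,k)\bigr)$ for $1\le k\le n$, so that $T_n$ has coefficients $c_k=\sum_j\mu_j\bigl(n+1-\max(j,k)\bigr)$.

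The two kernel estimates I would establish, with absolute constants, are
\[
\|G_j\|_1^*\asymp (n+1-j)\,\log\frac{e(n+1)}{\,n+1-j\,},\qquad
\|G_j^{(s)}\|_\infty^*\asymp \Sigma_j:=\sum_{k=1}^n k^s\bigl(n+1-\max(j,k)\bigr).
\]
For the first, split $\int_0^\pi$ into the ranges $x\lesssim 1/a_j$, $1/a_j\lesssim x\lesssim 1/b_j$, and $1/b_j\lesssim x\lesssim\pi$; on the last two use $|\sin(a_jx)\sin(b_jx)|\le|\sin(b_jx)|$ together with the equidistribution of $|\sin(a_jx)|$ (mean $2/\pi$), noting that the middle range contributes exactly the logarithm $\log(a_j/b_j)\asymp\log\frac{n+1}{n+1-j}$. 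For the second, the upper bound is the trivial $\|G_j^{(s)}\|_\infty^*\le\sum_k k^s\cdot 2(n+1-\max(j,k))$, and the matching lower bound comes from evaluating $G_j^{(s)}$ at $0$ when $s$ is even (where $|G_j^{(s)}(0)|=2\Sigma_j$) and at a point $x_0\asymp 1/n$ when $s$ is odd, using that the coefficient sequence $k\mapsto k^s(n+1-\max(j,k))$ is essentially unimodal and concentrated at a single dyadic scale.

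For the lower bound in \eqref{E1.5} I would take $T_n=G_{j^*}$ with $j^*:=\bigl\lceil n-n/(s+1)\bigr\rceil$ when $s\le n$, and $T_n=D_n$ (i.e. $j^*=n$) when $s>n$. Computing $\Sigma_{j^*}$ and $\|G_{j^*}\|_1^*$ from the formulae above—using $\sum_{k\le j}k^s\asymp j^{s+1}/(s+1)$, the fact that $k^s(n+1-k)$ peaks at $k\approx n\frac{s}{s+1}$ with half‑width $\asymp n/(s+1)$, and $\log\frac{e(n+1)}{n+1-j^*}\asymp\log(s+2)$—gives $\Sigma_{j^*}\asymp\bigl(n^s+\tfrac{n^{s+1}}{s+1}\bigr)\bigl(\tfrac1{\log(s+2)}+\tfrac1{\log(n+2)}\bigr)\,\|G_{j^*}\|_1^*$, which is the required lower bound. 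For the upper bound one writes, for $T_n=\sum_j\mu_j G_j\in\TT_{n,\mathrm c}$,
\[
\|T_n^{(s)}\|_\infty^*\le 2\sum_{k=1}^n c_k k^s=\sum_{j}2\mu_j\Sigma_j\le 2\Bigl(\max_i\frac{\Sigma_i}{\|G_i\|_1^*}\Bigr)\sum_j\mu_j\|G_j\|_1^*,
\]
and an elementary but case‑heavy optimisation shows $\max_i\Sigma_i/\|G_i\|_1^*\asymp\bigl(n^s+\tfrac{n^{s+1}}{s+1}\bigr)\bigl(\tfrac1{\log(s+2)}+\tfrac1{\log(n+2)}\bigr)$; the key point is that $(n+1-j)^{s+1}\ll n^{s+1}$ unless $n+1-j\lesssim n/(s+1)$, which forces the logarithm in $\|G_j\|_1^*$ to be $\gtrsim\log(s+2)$ exactly in the range where $\Sigma_j$ is largest. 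It therefore remains to show the no‑cancellation estimate $\bigl\|\sum_{j}\mu_j G_j\bigr\|_1^*\gtrsim\sum_{j}\mu_j\|G_j\|_1^*$ for $\mu_j\ge0$, which completes the reduction since $\sum_j\mu_j\|G_j\|_1^*\lesssim\|T_n\|_1^*$.

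The main obstacle is precisely this no‑cancellation estimate: the $G_j$ genuinely change sign, and not synchronously, so their $L_1$ masses cannot be added naively. I expect it to be proved by a scale‑by‑scale argument. Writing $\sum_j\mu_j G_j(x)=\bigl(R(x)-c_n\cos((n+1)x)\bigr)/(2\sin^2(x/2))$, where $R(x)=\sum_j\mu_j\cos jx$ has nonnegative coefficients (hence $\|R\|_\infty^*=R(0)=c_n=\sum_j\mu_j$), one decomposes $[0,\pi]$ dyadically and shows that on each annulus of scale $2^{-\ell}\gtrsim 1/n$ the rapid oscillation of $\cos((n+1)x)$ relative to $R$ forces $\int|R-c_n\cos((n+1)x)|$ to stay comparable to what it would be with $R$ replaced by any single summand $\mu_j\cos jx$, so that the contribution of each $G_j$ over its own range of scales $[1/a_j,1/b_j]$ survives in the sum. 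Carrying this out uniformly over all admissible $R$ (equivalently over all $\mu\ge0$) is the technical heart of the proof.
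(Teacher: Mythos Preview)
Your decomposition $T_n=\sum_{j}\mu_jG_j$ with $\mu_j\ge 0$ is, up to normalisation, identical to the paper's basis $V_l$ of Lemma~\ref{L4.2} (your $G_j$ has cosine coefficients $n\,v_j(k)$), and your kernel estimate $\|G_j\|_1^*\asymp(n+1-j)\log\frac{e(n+1)}{n+1-j}$ is exactly relation~\eqref{E4.4a}.  Once the no--cancellation inequality is in place, your upper--bound argument and your choice of extremiser $G_{j^*}$ coincide with the paper's estimates~\eqref{E5.3}--\eqref{E5.11} and~\eqref{E5.12}--\eqref{E5.13}.  So the architecture is right.

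The genuine gap is precisely the piece you flag as ``the technical heart'': the inequality $\bigl\|\sum_{j}\mu_jG_j\bigr\|_1^*\gtrsim\sum_{j}\mu_j\|G_j\|_1^*$.  Your sketched scale--by--scale argument is vague and would be substantially harder than necessary; in particular, on a dyadic annulus $[2^{-\ell-1},2^{-\ell}]$ the function $R(x)=\sum_j\mu_j\cos jx$ need not be slowly varying relative to $\cos((n+1)x)$ (nothing prevents the $\mu_j$ from being concentrated near $j=n$), so ``the rapid oscillation of $\cos((n+1)x)$ relative to $R$'' is not available in general, and the survival of each $G_j$'s contribution over its own range of scales does not follow from what you wrote.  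The paper bypasses this entirely by a one--line duality argument (Lemma~\ref{L4.5}): the polynomial
\[
Q_{2n+1}(x)=\frac{1}{2(n+1)}+2\sin((n+1)x)\sum_{k=1}^n\frac{\sin kx}{k}
\]
is uniformly bounded on $[-\pi,\pi)$ and has $\widehat{Q_{2n+1}}(k)=\frac{1}{n-k+1}$ for $0\le k\le n$, whence $\|T_n\|_1^*\gtrsim\bigl|\int T_nQ_{2n+1}\bigr|\gtrsim\sum_{k}\frac{c_k}{n-k+1}$.  Since $\sum_k\frac{c_k}{n-k+1}=\sum_j\mu_j\sum_k\frac{v_j(k)}{n-k+1}\asymp\sum_j\mu_j\|G_j\|_1^*$ by your own kernel estimate, this \emph{is} your no--cancellation inequality, obtained without any pointwise analysis of the $G_j$.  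With this in hand the rest of your outline goes through.  (Two small side remarks: your blanket use of $\sum_{k\le j}k^s\asymp j^{s+1}/(s+1)$ fails when $s\gtrsim j$---there the sum is $\asymp j^s$, which is in fact what you need for the case $s>n$; and for odd $s$ the paper avoids the evaluation at $x_0\asymp 1/n$ by simply applying Bernstein's inequality $\|T_n^{(s)}\|_\infty^*\ge n^{-1}\|T_n^{(s+1)}\|_\infty^*$ to reduce to the even case.)
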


\begin{remark}\label{R1.5}
 A weighted version of the $\lesssim$ part of Theorem \ref{T1.2} for $1\le p<q\le\iy$
 with power weights was established
by Bang and Huy in \cite[Theorem 2.1]{BH2020}.
\end{remark}

\begin{remark}\label{R1.66}
Analyzing the proofs, we note that
all theorems above are valid for the fractional case, namely, when $s\in(0,\iy)$. Here, the derivatives of a trigonometric polynomial are defined in the Weyl sense, i.e., $T^{(s)}(x):=\sum_{|k|\le n} (ik)^s c_k e^{ikx}$,
 where $(ik)^s:= |k|^s e^{i\frac{\pi}{2} s\sgn k}$. The similar definition for functions $f\in E_{\sigma,p}$ can be found in \cite{SKM1993}.
\end{remark}

The proofs  of the lower and upper estimates in \eqref{E1.3} and \eqref{E1.4} are presented in Sections
\ref{S2} and \ref{S3}, respectively.
The properties of concave sequences and trigonometric polynomials from $\TT_{n,\mbox{c}}$,
which are needed for the proof of Theorem \ref{T1.4}, are discussed in Section \ref{S4}.
Section \ref{S5} contains the proofs of all theorems.
In Section \ref{S6}, we discuss several properties of extremal trigonometric polynomials in
the  Bernstein--Nikolskii inequality for   $p=1$ and $q=\infty$.
We note that it suffices to prove Theorem \ref{T1.2} for $\sa=1$, so we discuss functions
from $E_p,\,p\in(0,\iy]$, in Sections \ref{S2}, \ref{S3}, \ref{S5}, and \ref{S7}.

For $p\in [1,\iy]$, the $\lesssim$ part of Theorem \ref{T1.1}
is proved by the standard method of estimation of
$\norm{D_n^{(s)}}_r^*$.
The proof of the $\lesssim$ part of Theorem \ref{T1.1} for $p\in (0,1)$ is more complicated.
It is based on an inequality for functions from $E_p$ that
is obtained by using certain properties of the discrete Hardy spaces, which are discussed
in the Appendix of Section \ref{S7}.
The proof of the $\lesssim$ part of Theorem \ref{T1.2} for $p\in (0,\iy]$ uses the
$\lesssim$ part of Theorem \ref{T1.1}
and also Theorem 1.4 from \cite{GT2017}.
The lower estimates in Theorems \ref{T1.1} and \ref{T1.2} are proved by the construction of special functions from
$E_{p}$ and $\TT_n$ in the spirit of \cite[Sect. 84]{A1965}.

Finally, it is worth mentioning that the following improvements of the  $\lesssim$ parts of relations
 \eqref{E1.3} and \eqref{E1.4} are valid in the case $p\in (0,1]$ and $q=\infty$:
 for any $T_n(x)=\sum_{k=-n}^nc_ke^{ikx}$ and  any $f\in E_p$,
\ba
\sum_{k=-n}^n \vert k\vert^s \vert c_k\vert
\lesssim_p  n^s \left( 1+ \left({n}/{s}\right)^{1/p}\right) \norm{T_n}_p^*, \qquad
\int_{-1}^1 \vert\xi\vert^{s} \vert\widehat{f}(\xi)\vert \,d\xi\lesssim_p   s^{-1/p} \norm{f}_p
\ea
 (see Lemmas  \ref{L3.4} and \ref{L3.5}, respectively).

\vspace{.2cm}
\section{Lower Bounds in Theorems \ref{T1.1} and \ref{T1.2}}\label{S2}
In this section, we prove the $\gtrsim$ parts of Theorems \ref{T1.1} and \ref{T1.2}.
\begin{lemma}\label{L2.1}
    For every $s\in\mathbb{N}$ and $0< p<q\leq \infty$, %or $0<p<1\leq q=\infty$,
the following relation holds:
\beq\label{E2.1}
\sup_{\substack{f\in E_p,\\ \norm{f}_p=1}}\norm{ f^{(s)}}_q \gtrsim_{p,q}  {s}^{{1}/{q}-{1}/{p}}.
\eeq
\end{lemma}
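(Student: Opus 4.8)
The plan is to prove \eqref{E2.1} by exhibiting, for each $s\in\N$, an explicit near-extremal function $f\in E_p$ and estimating $\|f\|_p$ and $\|f^{(s)}\|_q$ by hand. The guiding idea is this: $f^{(s)}$ has spectrum $(i\xi)^s\widehat f(\xi)$, and since $|\xi|\le 1$ on the spectrum of $f\in E_p$, the multiplier $|\xi|^s$ tends to annihilate the function; to prevent this one concentrates the spectrum of $f$ in a small neighbourhood of the endpoint $\xi=1$, of width $\asymp 1/s$ (this is the break-even width, because $(1-c/s)^s$ is the largest such power that does not decay to $0$). This is the kind of construction ``in the spirit of \cite[Sect.~84]{A1965}'' announced in the introduction.

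Concretely, fix once and for all a bump $\phi\in C_c^\infty(\R)$ with $\supp\phi\subset[1/2,1]$, $\phi\ge 0$, $\int_\R\phi=1$, and for $s\in\N$ put $\delta:=1/s$ and $f(x):=\int_\R\delta^{-1}\phi\!\left(\tfrac{t-1+\delta}{\delta}\right)e^{ixt}\,dt$, i.e. $f$ is the Fourier integral of a probability density supported in $[1-\delta,1]\subset[-1,1]$. Then $f$ is entire with $|f(z)|\le e^{|z|}$, so it has exponential type $\le 1$; the substitution $t=1-\delta+\delta v$ gives $f(x)=e^{ix(1-\delta)}\Phi(\delta x)$ with $\Phi(\xi):=\int_0^1\phi(v)e^{iv\xi}\,dv$. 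Since $\Phi$ is the Fourier transform of a smooth compactly supported function it lies in every $L_r(\R)$, $r>0$, so $f\in E_p$ and, by scaling, $\|f\|_p=\delta^{-1/p}\|\Phi\|_p=s^{1/p}\|\Phi\|_p\asymp_p s^{1/p}$. For the derivative, the same substitution yields $f^{(s)}(x)=i^s e^{ix(1-\delta)}\int_0^1 G_s(v)e^{i\delta xv}\,dv$ with $G_s(v):=(1-\delta(1-v))^s\phi(v)$; because $v\ge\tfrac12$ on $\supp\phi$ and $\delta=1/s$, the factor $(1-\delta(1-v))^s$ lies between two positive absolute constants there, hence $\int_0^1 G_s\asymp 1$. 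A first-order perturbation of $e^{i\delta xv}$ about $1$ then gives $|f^{(s)}(x)|\gtrsim 1$ on the interval $|x|\lesssim s$, so $\|f^{(s)}\|_q\gtrsim s^{1/q}$ (read as $1$ when $q=\iy$). Dividing,
\[
\sup_{\|g\|_p=1}\|g^{(s)}\|_q\ \ge\ \frac{\|f^{(s)}\|_q}{\|f\|_p}\ \gtrsim_{p,q}\ \frac{s^{1/q}}{s^{1/p}}\ =\ s^{1/q-1/p},
\]
which is \eqref{E2.1}; note the construction works uniformly for all $s\ge1$, so no small values of $s$ need separate treatment.

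The one point that requires care — and the only real obstacle — is the tension between the two length scales built into $f$. The spectral bump must be \emph{narrow}, of width $\delta\asymp 1/s$, so that the factor $(1-\delta(1-v))^s$ produced by the $s$‑th derivative stays bounded below on $\supp\phi$; but it must be \emph{not too narrow}, because $\|f\|_p$ grows like $\delta^{-1/p}$ and only a polynomial loss in $s$ is affordable. Choosing $\delta=1/s$ is exactly the balance point, and supporting $\phi$ away from the origin (say in $[1/2,1]$) is what makes $(1-\delta(1-v))^s$ uniformly bounded below in $s$. Once these choices are fixed, everything else — verifying the exponential type, the scaling of $\|f\|_p$, and the lower bound on $|f^{(s)}|$ near the origin — is routine.
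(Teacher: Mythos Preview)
Your proof is correct and is essentially the same construction as the paper's: both take the (inverse) Fourier transform of a smooth bump supported in an interval of length $\asymp 1/s$ near the endpoint $\xi=1$, so that $\|f\|_p\asymp_p s^{1/p}$ by scaling while $|f^{(s)}(x)|\gtrsim 1$ on an interval of length $\asymp s$, yielding $\|f^{(s)}\|_q\gtrsim s^{1/q}$. The only differences are cosmetic --- the paper uses a bump equal to $1$ near the origin and bounds $|f^{(s)}|$ via the positivity of $\cos(x\xi)$ on a small interval, whereas you use a bump supported in $[1/2,1]$ and a first-order Taylor estimate $|e^{i\theta}-1|\le|\theta|$; your choice even avoids the paper's implicit restriction to $s\ge 2$.
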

\begin{proof}
    Let $\phi$ be a non-negative bump function, that is, a smooth function supported in $[-1,1]$ such that $\phi(x)= 1$ for $|x|\leq {1}/{2}$. Let us set
    \ba
    \phi_s(x):= s \phi\left( s x-s+1\right),\qquad s\ge 2.
    \ea
    Since $\phi_s$ is a smooth function supported in $[1- {2}/{s},1]$, its Fourier transform $\widehat{\phi}_s$ is also smooth and it belongs to $E_p$ for any $p\in(0,\iy]$.
    Moreover,
    $
   \left|\widehat{\phi}_s(\xi)\right|= \left|\widehat{\phi}\left(s^{-1}\xi\right)\right|.
   $
   Therefore,
    \beq\label{E2.2}
    \norm{\widehat{\phi}_s}_p =\left(
    \int_{\mathbb{R}} \left|\widehat{\phi}\left(s^{-1}\xi\right)\right|^p d\xi\right)^{1/p}
    = \norm{\widehat{\phi}}_ps^{1/p}.
    \eeq
Furthermore, if $|\xi | \leq s$, then

              \ba
              \left\vert\left(\widehat{\phi}_s\right)^{(s)}(\xi)\right\vert
              &=&  \left\vert\int_{1-2/ s}^{1} s \phi\left ( s x-s+1\right) e^{-i x\xi}\, x ^s dx\right\vert=
              \\
              &=& \left\vert\int_{-1/s}^{1/s} s \phi\left ( sx\right) e^{-i x\xi}\, \left(x+1-{1}/{s}\right) ^s dx\right\vert\\
              &\geq& (1-1/s)^s \int_{0}^{1/s} s \phi\left ( sx\right)\cos x\xi  dx
                \gtrsim  \int_{0}^{1/s} s \phi\left ( sx\right)  dx
                > 1/2.
              \ea
Hence,
              \beq\label{E2.3}
              \norm{\left(\widehat{\phi}_s\right)^{(s)}}_q
              \geq \left(\int_{0}^s
              \left\vert\left(\widehat{\phi}_s\right)^{(s)}(\xi)\right\vert^qd\xi\right)^{1/q}
              \gtrsim s^{1/q}.
              \eeq
               Finally, \eqref{E2.1} follows from \eqref{E2.2} and \eqref{E2.3}.
\end{proof}

For trigonometric polynomials, we use a variation of the previous argument, with the Jackson kernel taking the place of the bump function.

\begin{lemma}\label{L2.2}
For every $n,s\in\mathbb{N}$ and $0< p<q\leq \infty$, %or $0<p<1\leq q=\infty$,
 the following relation holds:
 \beq\label{E2.4}
\sup_{\substack{T_n\in\mathcal{T}_n,\\ \norm{T_n}_p^*=1}}\norm{ T_n^{(s)}}_q^* \gtrsim_{p,q} n^s \left( 1+ \left({n}/{s}\right)^{{1}/{p}-{1}/{q}}\right).
\eeq
\end{lemma}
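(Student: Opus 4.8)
The plan is to split according to the size of $s$ relative to $n$ and, in the essential range $s\ll n$, to test against a frequency-translate of the Jackson kernel, in the spirit of Lemma~\ref{L2.1}. Write $\Phi(n,s):=\sup_{T_n\in\TT_n,\ \norm{T_n}_p^*=1}\norm{T_n^{(s)}}_q^*$. The right-hand side of \eqref{E2.4} is $\asymp_{p,q}n^s$ when $n\lesssim s$ and $\asymp_{p,q}n^s(n/s)^{1/p-1/q}$ when $s\lesssim n$, so it is enough to prove two lower bounds. First, for every $n,s$ the monomial $T_n(x)=e^{inx}$ gives $\norm{T_n^{(s)}}_q^*=n^s(2\pi)^{1/q}$ and $\norm{T_n}_p^*=(2\pi)^{1/p}$, hence $\Phi(n,s)\gtrsim_{p,q}n^s$; in particular, if $s\ge\varepsilon_0 n$ for a suitably small constant $\varepsilon_0$ (allowed to depend on $p$), then $n/s\le 1/\varepsilon_0$, so $1+(n/s)^{1/p-1/q}\asymp_{p,q}1$ and \eqref{E2.4} already follows from the monomial.

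It remains to treat $s<\varepsilon_0 n$ and prove $\Phi(n,s)\gtrsim_{p,q}n^s(n/s)^{1/p-1/q}$. I would fix $r=r(p)\in\N$ with $2rp>1$ and take $\mathcal J_{m,r}(x):=\gamma_{m,r}\bigl(\sin(mx/2)/\sin(x/2)\bigr)^{2r}$, the Jackson-type kernel of degree $r(m-1)$, normalized by $\frac1{2\pi}\int_{-\pi}^\pi\mathcal J_{m,r}=1$. Three standard facts are needed: $\mathcal J_{m,r}(0)\asymp_r m$; $\mathcal J_{m,r}(x)\asymp_r m^{1-2r}|x|^{-2r}$ for $1/m\le|x|\le\pi$, whence $\norm{\mathcal J_{m,r}}_p^*\asymp_p m^{1-1/p}$ since $2rp>1$; and, crucially, $\h{\mathcal J}_{m,r}(j)\ge0$ for all $j$ (the coefficient sequence is an $r$-fold convolution of the nonnegative Fej\'er coefficients $(m-|i|)_+$). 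Now choose $m$ with $r(m-1)\asymp n/s$ and $r(m-1)\le n/(2s)$, put $\ell:=n-r(m-1)$, and set
\[
T_n(x):=e^{i\ell x}\,\mathcal J_{m,r}(x)\in\TT_n .
\]
Since $|T_n(x)|=\mathcal J_{m,r}(x)$, this gives immediately $\norm{T_n}_p^*=\norm{\mathcal J_{m,r}}_p^*\asymp_p m^{1-1/p}\asymp_p (n/s)^{1-1/p}$.

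For the derivative one argues as in Lemma~\ref{L2.1}. All frequencies of $T_n$ lie in $[\ell-r(m-1),n]$, and because $r(m-1)\le n/(2s)$ the ratio of any two of them is $1+O(1/s)$; hence $(ik)^s=e^{i\pi s/2}k^s$ with $k^s\asymp n^s$ over the bulk of the spectrum, while $k^s\h{\mathcal J}_{m,r}(k-\ell)\ge0$. Factoring out the unimodular factors $e^{i\ell x}$ and $e^{i\pi s/2}$, for $|x|\le s/n$ one has $|jx|\le r(m-1)\,s/n\le\tfrac12<\tfrac\pi2$ for every frequency $j$ of $\mathcal J_{m,r}$, so
\[
|T_n^{(s)}(x)|=\Bigl|\sum_{j}(\ell+j)^s\h{\mathcal J}_{m,r}(j)e^{ijx}\Bigr|\ \ge\ \sum_{j}(\ell+j)^s\h{\mathcal J}_{m,r}(j)\cos(jx)\ \gtrsim\ n^s\sum_{j}\h{\mathcal J}_{m,r}(j)=n^s\,\mathcal J_{m,r}(0)\ \asymp_{p}\ \frac{n^{s+1}}{s}.
\]
Integrating over $|x|\le s/n\ (\subset[-\pi,\pi])$ yields $\norm{T_n^{(s)}}_q^*\gtrsim_{p,q}(n^{s+1}/s)(s/n)^{1/q}=n^{s+1-1/q}s^{1/q-1}$, and therefore
\[
\Phi(n,s)\ \ge\ \frac{\norm{T_n^{(s)}}_q^*}{\norm{T_n}_p^*}\ \gtrsim_{p,q}\ \frac{n^{s+1-1/q}s^{1/q-1}}{(n/s)^{1-1/p}}\ =\ n^s\,(n/s)^{1/p-1/q}\ \asymp_{p,q}\ n^s\bigl(1+(n/s)^{1/p-1/q}\bigr),
\]
the last step because $n/s>1/\varepsilon_0$. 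Together with the monomial bound this gives \eqref{E2.4}.

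The delicate point — and the one I expect to be the main obstacle — is the calibration $\deg\mathcal J_{m,r}\asymp n/s$: this is the periodic counterpart of localizing the bump $\phi_s$ of Lemma~\ref{L2.1} in a window of relative width $\asymp 1/s$. On the one hand it keeps all the multipliers $k^s$ comparable to $n^s$ across the spectral band, so that no exponential-in-$s$ factor is lost when passing from $\ell^s$ to $n^s$; on the other hand it is exactly what makes the window $|x|\le s/n$ admissible, i.e. one on which every phase $e^{ijx}$ stays in a half-plane, so the triangle inequality — rendered sharp by the positivity of $\h{\mathcal J}_{m,r}$ — is not wasteful. Once the degree is chosen correctly, the lower bound for $\norm{T_n^{(s)}}_q^*$ is routine and the upper bound for $\norm{T_n}_p^*$ is just the classical $L_p$-norm of the Jackson kernel. (The only borderline case, $s=1$, is the classical Bernstein--Nikolskii inequality; alternatively the argument above goes through once one replaces the crude bound $(\ell+j)^s\ge\ell^s$ by a truncation to the bulk $|j|\le\tfrac12\deg\mathcal J_{m,r}$ of the spectrum, where $(\ell+j)^s\asymp n^s$ and a fixed fraction of $\mathcal J_{m,r}(0)=\sum_j\h{\mathcal J}_{m,r}(j)$ resides.)
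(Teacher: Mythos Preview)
Your argument is correct and essentially the same as the paper's: both shift a Jackson-type kernel of degree $\asymp n/s$ to center frequency $\approx n$ and test on the window $|x|\lesssim s/n$, using nonnegativity of the Fourier coefficients to make the triangle inequality sharp. The paper works with the $r$-th power of the Dirichlet kernel $D_N$ (choosing $r$ with $rp>1$) rather than the $2r$-th power you take, and it truncates to the spectral bulk $|m|\le\lambda N$ from the outset---exactly your parenthetical fix for $s=1$---but these are cosmetic variations.
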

\begin{proof}
Note first that
\beq\label{E2.5}
\sup_{\substack{T_n\in\mathcal{T}_n,\\ \norm{T_n}_p^*=1}}\norm{ T_n^{(s)}}_q^*
\geq \norm{\left(e^{in\cdot}\right)^{(s)}}_q^*/\norm{e^{in\cdot}}_p^*\gtrsim_{p,q} n^s.
\eeq
 Next, let $r=r(p)\in \mathbb{N}$ be the smallest number such that $pr>1$.
 Then \eqref{E2.5} shows that if $n\leq 4rs$, \eqref{E2.4} holds.

Furthermore, assuming that $n>4rs$ for $s\ge 2$ and  setting
$N:=\lfloor {n/rs} \rfloor \geq 4$, we consider the Jackson kernel
\ba
J_{r,N}(x):= \left(\frac{\sin{(N+1/2)x}}{\sin{x/2}}\right)^{r}
\ea
with
\beq\label{E2.6}
\norm{J_{r,N}}_p^*= \norm{D_N}_{rp}^{*r} \asymp_{p} N^{r-1/p}.
\eeq
Note that (\ref{E2.6}) easily follows  from the Hardy--Littlewood equivalence  for monotone Fourier coefficients; namely, for $f(x)\sim\sum_{k=0}^\iy a_k \cos kx$ with a non-increasing $\{a_k\}_{k=0}^\iy$ we have $\|f\|_p^p\asymp_p \sum_{k=0}^\iy a_k^p (1+k)^{p-2}$ (see \cite[Ch. XII, Lemma 6.6]{Z2002}).

  The Fourier coefficients $\hat{J}_{r,N}(\cdot)$
   of $J_{r,N}$ are the convolution of the coefficients of $J_{1,N}$ with themselves $r$ times. Since $\hat{J}_{1,N}(m)=1$ for $|m|\leq N$ and $0$ otherwise, we deduce that $\hat{J}_{2,N}(m) \asymp N$ for $|m|\leq N$. Likewise, we obtain that $\hat{J}_{3,N}(m) \asymp N^2$ for $|m|\leq N/2$.  By induction on $r$, we further obtain that the coefficients $\hat{J}_{r,N}(\cdot)$ are non-negative and there is a $\lambda=\lambda(r)\in(0,1)$ such that for $N$ large enough,
   \beq\label{E2.7}
  \widehat{J}_{r,N}(m)\asymp_{r} N^{r-1},\qquad |m|\leq \lambda N.
  \eeq
  Note that \eqref{E2.7} for an even $r$ can be also proved by the following asymptotic relation
  (see, e.g., \cite[Ch. 2, Eqn. (3.38)]{DS2008}):
  $1-\pi \widehat{J}_{r,N}(m)/\norm{J_{r,N}}_1^*\asymp_r (m/N)^2$.

Next,  setting
\ba
    T(x):= J_{r,N}(x)e^{i(n-r N)x},
\ea
    we see that $T\in\TT_n$  and by \eqref{E2.6},
     \beq\label{E2.8}
    \norm{T}_p^* \asymp_{p} N^{r-1/p}\asymp_{p} \left({n}/{s}\right)^{r-1/p}.
\eeq
    Then it follows from \eqref{E2.7} that for $|x|\leq {\pi}/{(4rN)} \asymp_p s/n$ we have
     \ba
\vert T^{(s)}(x)\vert
&\geq &\left | \sum _{m=-rN}^{rN}\widehat{J}_{r,N}(m)(n-rN+m)^s\cos m x\right|\\
&\gtrsim&\sum _{\vert m\vert\leq \lambda N}\widehat{J}_{r,N}(m)(n-rN+m)^s\\
 &\gtrsim_p& (n-rN)^s N^{r}
 \geq (1-1/s)^s n^s N^r
  \gtrsim_p n^s \left({n}/{s} \right)^{r}.
\ea
  Hence,
\beq\label{E2.9}
\norm{T^{(s)}}_q^*
\geq \left(\int_{\vert x\vert\leq \pi/(4rN)}\left\vert T^{(s)}(x)\right\vert^q dx\right)^{1/q}
\gtrsim_{p,q} n ^s \left({n}/{s} \right)^{r-1/q},
\eeq
and combining \eqref{E2.8} with \eqref{E2.9}, we obtain
 \beq\label{E2.10}
    \sup_{\substack{T_n\in\mathcal{T}_n,\\ \norm{T_n}_p^*=1}}\norm{ T_n^{(s)}}_q^* \gtrsim_{p,q} n^s  \Big({n}/{s}\Big)^{{1}/{p}-{1}/{q}}.
    \eeq
    Finally, \eqref{E2.4} for $n>4rs$ and $s\ge 2$
     follows from \eqref{E2.10} and \eqref{E2.5}.
     This completes the proof of the lemma.
\end{proof}

\vspace{.2cm}
\section{Upper Bounds in Theorem \ref{T1.1}}\label{S3}
In this section, we prove the $\lesssim$ part of Theorem \ref{T1.1}.

\subsection{The case $p\in[1,\iy)$}\label{S3.1}

\begin{lemma} \label{L3.1} If $n,s\in\N$ and $r\in(1,\infty],$ then
\beq\label{E3.1}
  \norm{D_n^{(s)}}_r^*\lesssim _r n^s\left(1+\left({n}/{s}\right)^{1-{1}/{r}}\right).
\eeq
\end{lemma}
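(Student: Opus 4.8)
The plan is to derive \eqref{E3.1} from two pointwise estimates of $D_n^{(s)}$ and a one–variable integration. Writing $D_n(x)=\sum_{k=-n}^ne^{ikx}$, we have $D_n^{(s)}(x)=\sum_{k=-n}^n(ik)^se^{ikx}$; the term $k=0$ vanishes for $s\ge1$, and pairing $k$ with $-k$ gives $D_n^{(s)}(x)=2\Re\bigl[i^sS_n(x)\bigr]$, where $S_n(x):=\sum_{k=1}^nk^se^{ikx}$, so that $\bigl|D_n^{(s)}(x)\bigr|\le 2|S_n(x)|$. The crucial estimate is
\[
|S_n(x)|\le\frac{2\pi n^s}{|x|},\qquad 0<|x|\le\pi ,
\]
which I would obtain by summation by parts: with $A_m(x):=\sum_{k=1}^me^{ikx}$ one has $|A_m(x)|=|e^{imx}-1|/|e^{ix}-1|\le 1/|\sin(x/2)|\le \pi/|x|$ on $|x|\le\pi$ by Jordan's inequality, and Abel's transformation yields $S_n=n^sA_n-\sum_{k=1}^{n-1}\bigl((k+1)^s-k^s\bigr)A_k$. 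Since the increments $(k+1)^s-k^s$ are positive and telescope to $n^s-1$, this gives $|S_n(x)|\le\tfrac{\pi}{|x|}(n^s+n^s-1)$, hence $\bigl|D_n^{(s)}(x)\bigr|\le 4\pi n^s/|x|$. The second, elementary estimate is $\bigl|D_n^{(s)}(x)\bigr|\le\sum_{|k|\le n}|k|^s\le 2\bigl(n^s+\tfrac{n^{s+1}}{s+1}\bigr)$.

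With these bounds I would split into cases. If $r=\iy$, or if $n\le s$ and $r<\iy$, the elementary estimate already gives $\norm{D_n^{(s)}}_r^*\lesssim n^s+n^{s+1}/s\asymp n^s\bigl(1+(n/s)^{1-1/r}\bigr)$ (when $n\le s$ one uses $n^{s+1}/s\le n^s$, and for $r<\iy$ the factor $(2\pi)^{1/r}\le 2\pi$ is harmless). For $1<r<\iy$ and $n>s$, the elementary bound reads $\bigl|D_n^{(s)}(x)\bigr|\le M_2:=4n^{s+1}/s$, while the first bound reads $\bigl|D_n^{(s)}(x)\bigr|\le M_1/|x|$ with $M_1:=4\pi n^s$; the two cross at $|x|=M_1/M_2=\pi s/n<\pi$. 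Therefore
\[
\norm{D_n^{(s)}}_r^{*\,r}\le\int_{|x|\le\pi s/n}M_2^{\,r}\,dx+\int_{\pi s/n\le|x|\le\pi}\Bigl(\frac{M_1}{|x|}\Bigr)^{r}dx\lesssim_r M_2^{\,r}\cdot\frac sn+M_1^{\,r}\Bigl(\frac sn\Bigr)^{1-r},
\]
where the second integral is computed using $r>1$. A direct computation shows that both terms on the right are $\asymp_r n^{(s+1)r-1}s^{1-r}$, so taking the $r$-th root gives $\norm{D_n^{(s)}}_r^*\lesssim_r n^{s+1-1/r}s^{1/r-1}=n^s(n/s)^{1-1/r}$, which is dominated by the right–hand side of \eqref{E3.1}.

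The only real difficulty is the first pointwise estimate: a crude triangle inequality (equivalently, Bernstein's inequality $\norm{D_n^{(s)}}_r^*\le n^s\norm{D_n}_r^*$) costs a factor $\sum_{k\le n}k^s\asymp n^{s+1}/s$ rather than $n^s$, i.e. exactly the loss in the smoothness parameter we are trying to avoid. Summation by parts repairs this because it replaces $\sum_{k\le n}k^s$ by the telescoping sum $\sum_{k=1}^{n-1}\bigl((k+1)^s-k^s\bigr)=n^s-1$; everything after that is a routine split of the integral over $[-\pi,\pi]$ at $|x|=\pi s/n$, with the hypothesis $r>1$ being precisely what makes $\int_{\pi s/n}^{\pi}x^{-r}\,dx\asymp_r(s/n)^{1-r}$ finite and controlled.
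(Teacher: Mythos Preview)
Your argument is correct, but it proceeds quite differently from the paper's. The paper invokes the Hardy--Littlewood equivalence for monotone Fourier coefficients to convert $\norm{D_n^{(s)}}_r^*$ directly into the weighted discrete sum $\bigl(\sum_{k=1}^n k^{r-2}(n-k+1)^{rs}\bigr)^{1/r}$, and then estimates that sum by comparison with the beta integral $\int_0^1 t^{r-2}(1-t)^{rs}\,dt$ together with the asymptotic $\Gamma(rs+1)/\Gamma(rs+r)\asymp_r s^{1-r}$. You instead derive two pointwise bounds---a trivial one $\vert D_n^{(s)}\vert\lesssim n^{s+1}/s$ and an oscillatory one $\vert D_n^{(s)}(x)\vert\lesssim n^s/\vert x\vert$ obtained by Abel summation, the key point being that the telescoping $\sum_{k=1}^{n-1}\bigl((k+1)^s-k^s\bigr)=n^s-1$ saves exactly the factor that the crude triangle inequality would lose---and then split the integral at $\vert x\vert=\pi s/n$ where the two bounds cross. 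Your route is more elementary and self-contained (no Hardy--Littlewood lemma, no gamma-function asymptotics), and makes the mechanism of the saving in $s$ completely explicit. The paper's route avoids the case split on $n\le s$ versus $n>s$ and fits naturally with its later use of the same Hardy--Littlewood lemma for the Jackson kernel in the lower-bound argument.
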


\begin{proof}
 The inequality is trivial for $r=\infty$. If $r\in(1,\iy)$,
then using again the Hardy--Littlewood equivalence  for monotone Fourier coefficients
\cite[Ch. XII, Lemma 6.6]{Z2002}, we obtain
\beq\label{E3.2}
\norm{D_n^{(s)}}_r^*
\lesssim \norm{\sum_{k=1}^n k^s e^{ik\cdot}}_r^*
= \norm{\sum_{k=1}^n (n-k+1)^s e^{ik\cdot}}_r^*
\lesssim_r
 \left(\sum_{k=1}^n k^{r-2} (n-k+1)^{rs} \right)^{{1}/{r}}.
\eeq
Next, we have by elementary estimates and by \eqref{E3.4a} and \eqref{E3.4b},
\bna\label{E3.3}
&&\sum_{k=3}^{n} \left(\frac{k}{n}\right)^{r-2}
\left(1-\frac{k-1}{n}\right)^{rs}
    \lesssim_r n\sum_{k=3}^{n}  \int_{(k-2)/n}^{(k-1)/n}t^{r-2} (1-t)^{rs}dt\\
    &&< n\int_0 ^1 t^{r-2} (1-t)^{rs} dt
    \lesssim_r n {\Gamma(rs+1)}/{\Gamma(rs+r)}
    \lesssim_r ns^{1-r}.
    \nonumber
\ena
Combining now \eqref{E3.2} with \eqref{E3.3}, we arrive at
\ba
\norm{D_n^{(s)}}_r^*
\lesssim_r \left(n^{rs}+2^{r-2}(n-1)^{rs}+n^{r-1+rs}s^{1-r}\right)^{1/r}
\lesssim_r n^s+n^{s+1-1/r}s^{1/r-1}.
\ea
Thus \eqref{E3.1} is established.
\end{proof}

  %\begin{proposition}\label{up} For every $n,s\in\mathbb{N}$, $1 \leq p<q\leq \infty$,
%$$\norm{ T_n^{(s)}}_q \lesssim_{p,q} n^s \Big( 1+ \left(\frac{n}{s}\right)^{\frac{1}{p}-\frac{1}{q}}\Big) \norm{T_n}_p.$$
%\end{proposition}

As a corollary of Lemma \ref{L3.1}, we obtain the $\lesssim$ part of Theorem \ref{T1.1} for $p\in[1,\iy)$.
\begin{corollary}\label{C3.2} For $n,s\in\N$ and $1\leq p<q\leq\infty$,
\ba
\sup_{\substack{T_n\in\mathcal{T}_n,\\ \norm{T_n}_p^*=1}}\norm{ T_n^{(s)}}_q^*
\lesssim_{p,q} n^s \left(1+ \left({n}/{s}\right)^{{1}/{p}-{1}/{q}}\right).
\ea
\end{corollary}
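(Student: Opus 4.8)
The plan is to deduce Corollary~\ref{C3.2} from Lemma~\ref{L3.1} by the standard convolution representation of trigonometric polynomials via the Dirichlet kernel. First I would write any $T_n\in\TT_n$ as $T_n = T_n * D_n$ (normalized convolution on $[-\pi,\pi]$), so that $T_n^{(s)} = T_n * D_n^{(s)}$. Taking $L_q$-norms and applying Young's convolution inequality with exponents chosen so that $1 + 1/q = 1/p + 1/r$, i.e.\ $r = r(p,q)$ defined by $1/r = 1 - 1/p + 1/q$, one gets
\ba
\norm{T_n^{(s)}}_q^* \lesssim \norm{T_n}_p^* \,\norm{D_n^{(s)}}_r^*.
\ea
Since $1\le p<q\le\iy$, the exponent $r$ satisfies $1/r = 1 - (1/p - 1/q) < 1$, hence $r>1$ (and $r\le\iy$), which is exactly the range covered by Lemma~\ref{L3.1}. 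Substituting the bound $\norm{D_n^{(s)}}_r^*\lesssim_r n^s(1+(n/s)^{1-1/r})$ and noting $1 - 1/r = 1/p - 1/q$ yields
\ba
\norm{T_n^{(s)}}_q^* \lesssim_{p,q} n^s\Big(1+(n/s)^{1/p-1/q}\Big)\norm{T_n}_p^*,
\ea
which is the claimed estimate after dividing by $\norm{T_n}_p^*=1$.

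The key steps, in order, are: (1) record the reproducing property $T_n = \frac{1}{2\pi} T_n * D_n$ for $T_n\in\TT_n$ and differentiate under the convolution to get $T_n^{(s)} = \frac{1}{2\pi} T_n * D_n^{(s)}$; (2) apply Young's inequality on the circle with the Hölder-conjugate bookkeeping $1/p + 1/r = 1 + 1/q$; (3) verify that the resulting $r$ lies in $(1,\iy]$ precisely because $0 \le 1/p - 1/q < 1$ when $1\le p<q\le\iy$; (4) plug in Lemma~\ref{L3.1} and simplify the exponent via $1-1/r = 1/p-1/q$. The only subtlety is the boundary behaviour: when $p=1$ we have $1/p - 1/q = 1 - 1/q$, so $1/r = 1/q$, i.e.\ $r=q$, and Young's inequality is still valid (it is the case $\norm{f*g}_q \le \norm{f}_1\norm{g}_q$); when $q=\iy$ we get $1/r = 1-1/p$, so $r = p'$, and again Young applies. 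When $p=1$ and $q=\iy$ simultaneously one has $r=\iy$ and the estimate is the trivial pointwise bound $\norm{T_n^{(s)}}_\iy^* \le \norm{T_n}_1^* \norm{D_n^{(s)}}_\iy^*$ with $\norm{D_n^{(s)}}_\iy^* \asymp n^{s+1}$, consistent with Lemma~\ref{L3.1}.

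I do not anticipate a genuine obstacle here: this is a short deduction, and the main point is simply to check that the Young exponent $r$ stays in the admissible range $(1,\iy]$ of Lemma~\ref{L3.1}, which is guaranteed by the hypothesis $1 \le p < q \le \iy$ (strict inequality $p<q$ forces $1/r<1$, while $p\ge 1$ forces $1/r\ge 1/q$, hence $r\le\iy$, with $r=1$ excluded since $p<q$). One minor point worth stating explicitly is that Young's inequality in the quasi-Banach regime is not needed, since we are assuming $p\ge 1$ throughout this corollary; the harder range $p\in(0,1)$ is deferred to the subsequent subsection via discrete Hardy space techniques, as announced in the introduction.
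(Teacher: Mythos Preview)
Your proposal is correct and follows essentially the same approach as the paper's proof: write $T_n^{(s)}$ as the convolution of $T_n$ with $D_n^{(s)}$, apply Young's inequality with $1/r = 1 + 1/q - 1/p$, and invoke Lemma~\ref{L3.1}. The paper's proof is just a more compressed version of what you wrote; your additional verification that $r\in(1,\infty]$ and the discussion of the boundary cases $p=1$, $q=\infty$ are helpful clarifications but not different ideas.
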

\begin{proof} Using Young's inequality for periodic convolutions and Lemma \ref{L3.1}, we obtain
for any $T_n\in\TT_n$
\ba
    \norm{T_n^{(s)}}_q^*
    \lesssim \norm{D_n^{(s)}}_r^* \norm{T_n}_p^*
    \lesssim_{p,q} n^s\left(1+\left({n}/{s}\right)^{1-{1}/{r}}\right)\norm{T_n}_p^*,
\ea
where $0\le 1/r:=1/q-1/p+1<1$.
\end{proof}

\subsection{The case $p\in(0,1]$}\label{S3.2}
Here, we prove the $\lesssim$ part of Theorem \ref{T1.1} for $p\in(0,1]$.
Note that the case $p=1$ is also discussed in Corollary \ref{C3.2}.
Our main result is the following:
%We have actually proved
          \begin{lemma}\label{L3.5}
              If $s\in\N$ and $p\in(0,1]$, then for any $f\in E_p$,
              \beq\label{E3.6}
              \norm{f^{(s)}}_\iy
                 \lesssim \int_{-1}^1 \vert\xi\vert^{s} \vert\widehat{f}(\xi)\vert\,d\xi
                 \lesssim_p   s^{-1/p} \norm{f}_p.
          \eeq
          \end{lemma}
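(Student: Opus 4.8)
The first inequality in \eqref{E3.6} is immediate: since $f\in E_p\subset E_1$ has $\widehat f$ supported in $[-1,1]$, we have $f^{(s)}(x)=\frac{1}{2\pi}\int_{-1}^1(i\xi)^s\widehat f(\xi)e^{ix\xi}\,d\xi$ by Fourier inversion, so $\norm{f^{(s)}}_\iy\le\frac1{2\pi}\int_{-1}^1|\xi|^s|\widehat f(\xi)|\,d\xi$. The real content is the second inequality, and the plan is to reduce it, via an atomic decomposition, to a single estimate for atoms. First I would recall the structure of functions in $E_p$ for $p\le1$: by the Plancherel--Polya / Paley--Wiener theory together with the theory of the real Hardy space $H_p(\R)$ (which the authors develop in the Appendix, Section~\ref{S7}, in its discrete incarnation), a function $f\in E_p$ with $\widehat f$ supported in $[-1,1]$ lies in $H_p$, and admits an atomic decomposition $f=\sum_j \mu_j a_j$ with $\sum_j|\mu_j|^p\lesssim_p\norm{f}_p^p$, where each $a_j$ is an $H_p$-atom that is itself of exponential type $\le 1$ (one can arrange this by taking atoms adapted to the band-limited setting, i.e. of the form $a=\widehat\psi$ with $\psi$ supported in $[-1,1]$, normalized in $L_\iy$ with enough vanishing moments).

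Given such a decomposition, by $p$-subadditivity of the functional $g\mapsto\int_{-1}^1|\xi|^s|\widehat g(\xi)|\,d\xi$ raised to the $p$-th power (the triangle inequality in $L_1$ gives subadditivity, and $(\sum t_j)^p\le\sum t_j^p$ for $p\le1$), it suffices to prove the single-atom bound
\beq\label{E3.6atom}
\int_{-1}^1|\xi|^s|\widehat a(\xi)|\,d\xi\lesssim_p s^{-1/p}
\eeq
for every normalized band-limited $H_p$-atom $a$, with the $s^{-1/p}$ uniform over atoms; then summing gives $\big(\int_{-1}^1|\xi|^s|\widehat f(\xi)|\,d\xi\big)^p\le\sum_j|\mu_j|^p\big(\int_{-1}^1|\xi|^s|\widehat{a}_j(\xi)|\,d\xi\big)^p\lesssim_p s^{-1}\sum_j|\mu_j|^p\lesssim_p s^{-1}\norm{f}_p^p$, which is \eqref{E3.6}. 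To prove \eqref{E3.6atom} I would use the defining properties of an $H_p$-atom supported on an interval $I$: $\norm{a}_\iy\le|I|^{-1/p}$ and $\int x^\ell a(x)\,dx=0$ for $0\le\ell\le\lfloor 1/p-1\rfloor=:M$. The moment conditions translate into $\widehat a$ vanishing to order $M+1$ at the origin, i.e. $|\widehat a(\xi)|\lesssim |I|^{M+1}\norm{a}_1|\xi|^{M+1}\lesssim |I|^{M+1-1/p+1}|\xi|^{M+1}$ near $0$; combined with the trivial bound $|\widehat a(\xi)|\le\norm{a}_1\le|I|^{1-1/p}$, one gets $|\widehat a(\xi)|\lesssim |I|^{1-1/p}\min(1,(|I||\xi|)^{M+1})$. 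Plugging this into $\int_{-1}^1|\xi|^s|\widehat a(\xi)|\,d\xi$ and splitting at $|\xi|\sim 1/|I|$ leaves an elementary integral; when $|I|\ge1$ the first regime dominates and one gets a bound $\lesssim_p |I|^{1-1/p}\cdot\frac{1}{s+1}\lesssim_p s^{-1}\le s^{-1/p}$, while when $|I|\le1$ one integrates $|\xi|^{s+M+1}$ over $[-1/|I|,1/|I|]$ against the weight and finds the worst case $|I|\to1$ again giving the stated bound after using $M+1>1/p-1$, so $s+M+1>s+1/p-1$ and $\frac{1}{s+M+2}\lesssim_p\frac1{s}$, together with keeping the correct power of $|I|$.

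The main obstacle I anticipate is not the atomic estimate \eqref{E3.6atom}, which is a routine calculus exercise once the atom's properties are written out, but rather \emph{justifying that $f\in E_p$ admits an atomic decomposition into atoms that are themselves entire of exponential type $\le 1$} with the $\ell_p$-control of coefficients. A naive $H_p(\R)$ atomic decomposition produces compactly supported atoms, which are not band-limited, so $\widehat{a}_j$ need not vanish outside $[-1,1]$ and the truncation of the integral to $[-1,1]$ is not obviously harmless term by term. The resolution — and, I expect, the reason the authors route through the \emph{discrete} Hardy space $h_p(\Z)$ in Section~\ref{S7} — is to pass to the sampling sequence $\{f(\pi k)\}_{k\in\Z}$ (or $\{f(k)\}$ after rescaling), which by Plancherel--Polya lies in $\ell_p$ and in fact in $h_p(\Z)$, decompose \emph{that} sequence into discrete $h_p$-atoms, and reconstruct $f$ from its samples via the cardinal series $f(x)=\sum_k f(\pi k)\,\sinc(x-\pi k)$, turning discrete atoms into genuinely band-limited pieces of $f$. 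So the real work — which I would cite from the Appendix — is the equivalence $\norm{f}_p\asymp_p\norm{\{f(\pi k)\}}_{h_p(\Z)}$ for $f\in E_p$, $p\le1$, and the atomic characterization of $h_p(\Z)$; granting these, the argument above closes. I would write the proof by first invoking that equivalence and the discrete atomic decomposition, then reducing to a discrete analogue of \eqref{E3.6atom} for a single $h_p$-atom $a=(a_k)$ supported on an integer block of length $L$ with $\norm{a}_{\ell_\iy}\le L^{-1/p}$ and $\sum_k k^\ell a_k=0$ for $\ell\le M$, whose associated band-limited function $\sum_k a_k\sinc(\cdot-\pi k)$ has transform $\widehat{}$ equal to a trigonometric polynomial in $\xi$ on $[-1,1]$ vanishing to order $M+1$ at $0$, and then running the same split-the-integral calculation.
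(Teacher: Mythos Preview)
Your overall architecture is correct and matches the paper: pass to the discrete Hardy space $H_p(\Z)$ via the sampling isomorphism, decompose into $H_p$-atoms, and reduce to a single-atom estimate. The gap is in the single-atom estimate itself, and it is fatal as written.

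You claim that for a discrete atom $a=(a_k)$ the transform of $\sum_k a_k\sinc(\cdot-\pi k)$, namely $\pi\sum_k a_k e^{-ik\pi\xi}$ on $(-1,1)$, vanishes to order $M+1$ at $\xi=0$. That vanishing is useless here: the weight $|\xi|^s$ is already tiny at the origin and maximal at $\xi=\pm1$, so extra zeros at $0$ buy nothing. Your split-the-integral computation then yields at best $L^{1-1/p}/(s+1)\lesssim s^{-1}$, and your concluding step ``$s^{-1}\le s^{-1/p}$'' is false for every $p<1$ (for $p<1$ one has $1/p>1$, hence $s^{-1/p}<s^{-1}$). So the atom bound \eqref{E3.6atom} is \emph{not} established by your argument.

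What the paper does differently --- and this is the point you are missing --- is the twist by $(-1)^k$: it sets $f_a(x)=\sum_k(-1)^k a_k\sinc(x-\pi k)$, so that on $(-1,1)$ one has $\widehat{f_a}(\xi)=\pi\sum_k a_k e^{-ik\pi(\xi-1)}$. Now the moment conditions $\sum_k a_k k^j=0$ for $0\le j\le j_0:=\lfloor 1/p-1\rfloor$ force $\widehat{f_a}$ to vanish to order $j_0+1$ at $\xi=\pm1$, i.e.\ at the \emph{endpoints}, where $|\xi|^s\approx 1$. Taylor's theorem then gives $|\widehat{f_a}(\xi)|\lesssim (1-\xi)^{j_0+1}N^{j_0+2-1/p}$ for an atom of size $N$, and the relevant integral becomes a beta function:
\[
\int_0^1 \xi^s(1-\xi)^{j_0+1}\,d\xi \;=\; B(s+1,j_0+2)\;\asymp_p\; s^{-(j_0+2)}.
\]
Since $j_0+2\ge 1/p$, this combines with $N^{j_0+2-1/p}$ (and $N\le s$ in the nontrivial regime; when $N>s$ the trivial bound $N^{1-1/p}/(s+1)\le s^{1-1/p}/s=s^{-1/p}$ suffices) to give exactly $s^{-1/p}$. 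In short: the $(-1)^k$ is not cosmetic --- it relocates the cancellation to the only place where it can interact with the weight $|\xi|^s$, and without it you are stuck at $s^{-1}$.
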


\begin{remark}
    Note that for $p=1$, the right relation of \eqref{E3.6} can be easily obtained as follows:
    \ba
    \int_{-1}^1 |\xi|^s |\widehat{f}(\xi) | \,d \xi
    \leq \norm{f}_1 \int_{-1}^1 |\xi|^s  \,d \xi
    \lesssim s^{-1} \norm{f}_1.
    \ea
    However, the proof for $p\in (0,1)$ is more complicated,
and it requires certain properties of the discrete Hardy spaces.
\end{remark}
The counterpart of Lemma \ref{L3.5} for trigonometric polynomials is given in the next result.

\begin{lemma}\label{L3.4}
If $n,s\in\N$ and $p\in(0,1]$, then for any
$T_n(x)=\sum_{k=-n}^n c_k e^{ik x}\in \TT_n$,
\beq\label{E3.5}
\norm{T_n^{(s)}}_\iy^*
\leq\sum_{k=-n}^n \vert c_k\vert \vert k\vert^s
\lesssim_p  n^s \left( 1+ \left({n}/{s}\right)^{1/p}\right) \norm{T_n}_p^*.
\eeq
\end{lemma}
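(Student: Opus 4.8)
The first inequality $\norm{T_n^{(s)}}_\iy^*\le\sum_{|k|\le n}|c_k||k|^s$ is immediate from the triangle inequality applied to $T_n^{(s)}(x)=\sum_{|k|\le n}(ik)^s c_k e^{ikx}$. So the content is the estimate $\sum_{|k|\le n}|c_k||k|^s\lesssim_p n^s(1+(n/s)^{1/p})\norm{T_n}_p^*$. The plan is to split the sum according to whether $|k|$ is comparable to $n$ or not, and to use a discrete Hardy-space / atomic-decomposition argument for the low-frequency part, exactly as foreshadowed in the introduction (``certain properties of the discrete Hardy spaces, which are discussed in the Appendix of Section \ref{S7}''). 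For the high frequencies $s\le|k|\le n$ one has $|k|^s\le n^s$, and for these the bound should come from a clean inequality of the form $\sum_{|k|\le n}|c_k|\lesssim_p n^{1/p-1}\cdot n\cdot\norm{T_n}_p^*$? — more precisely, the expected tool is: for $T_n\in\TT_n$ and $p\in(0,1]$, $\sum_{|k|\le n}|c_k|\lesssim_p n^{1/p}\,\norm{T_n}_p^*$ (a Hausdorff--Young-type estimate in the reverse direction, valid because the spectrum has $O(n)$ terms). Granting this, the high-frequency part is at most $n^s\sum_{|k|\le n}|c_k|\lesssim_p n^{s+1/p}\norm{T_n}_p^*=n^s(n/s)^{1/p}\cdot s^{1/p}\cdot$\dots which is \emph{too big by a factor $s^{1/p}$}, so a naive split does not suffice; one must exploit the decay of $|k|^s$ for $|k|$ not close to $n$.

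Here is the refinement I would carry out. Introduce the multiplier representation $T_n^{(s)}(x)=(T_n*K_{n,s})(x)$ where $\widehat{K_{n,s}}(k)=(ik)^s$ for $|k|\le n$; equivalently work directly with the coefficient sum. Write $|k|^s=n^s(|k|/n)^s$ and note $(|k|/n)^s\le e^{-s(1-|k|/n)}\le e^{-s(n-|k|)/n}$ for $|k|\le n$. Group the frequencies into dyadic-type blocks $B_j=\{k: 2^{j-1}< n-|k|\le 2^j\}$, $j=0,1,\dots,\lceil\log_2 n\rceil$, plus the block $\{|k|=n\}$. On $B_j$ the weight $|k|^s$ is $\lesssim n^s e^{-s2^{j-1}/n}$. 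On each block the relevant polynomial $P_j:=\sum_{k\in B_j}c_k e^{ikx}$ has spectrum of size $\asymp 2^j$ sitting inside a band of width $2^j$ around $\pm n$; by the shift-invariance of the $L_p^*$ quasinorm and the reverse Hausdorff--Young inequality on $\TT_{2^j}$ (which is where the discrete $H_p$-space machinery enters, since for $p<1$ one cannot use $\ell_1\hookleftarrow \ell_p$ naively — one needs that $\TT_m$ with the normalized $L_p^*$ norm embeds into the discrete Hardy space, giving $\sum_{k\in B_j}|c_k|\lesssim_p (2^j)^{1/p}\norm{P_j}_p^*\lesssim_p (2^j)^{1/p}\norm{T_n}_p^*$, the last step using the projection onto a spectral band being bounded on $H_p$). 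Summing, $\sum_{|k|\le n}|c_k||k|^s\lesssim_p n^s\norm{T_n}_p^*\sum_{j\ge 0}(2^j)^{1/p}e^{-s2^{j-1}/n}$.

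It remains to estimate $\Sigma:=\sum_{j\ge 0}(2^j)^{1/p}e^{-s2^{j-1}/n}$. Split at the index $j_0$ where $2^{j_0}\asymp n/s$ (if $n\le s$ then $j_0\le 0$ and only the first few terms survive, giving $\Sigma\lesssim 1$, consistent with the ``$1$'' in the bound). For $j\le j_0$, $(2^j)^{1/p}$ is increasing and the exponential is $\asymp 1$, so $\sum_{j\le j_0}(2^j)^{1/p}\lesssim (2^{j_0})^{1/p}\asymp (n/s)^{1/p}$; for $j> j_0$ the exponential decay wins and the tail is $\lesssim (n/s)^{1/p}$ as well. Hence $\Sigma\lesssim 1+(n/s)^{1/p}$, and we obtain $\sum_{|k|\le n}|c_k||k|^s\lesssim_p n^s(1+(n/s)^{1/p})\norm{T_n}_p^*$, as desired. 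The main obstacle is the block-wise reverse Hausdorff--Young / spectral-projection bound $\sum_{k\in B_j}|c_k|\lesssim_p (2^j)^{1/p}\norm{T_n}_p^*$ for $p<1$: this is precisely where the discrete Hardy-space structure is needed (spectral restriction to an arithmetic-progression band must be shown bounded on the relevant $H_p$, and the $\ell_p$-to-$\ell_1$ passage replaced by the atomic estimate), and I expect the paper to establish it as a separate lemma in Section \ref{S7}; everything else is the elementary summation above.
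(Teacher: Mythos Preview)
Your dyadic strategy is natural, but the justification you give for the key block estimate
\[
\sum_{k\in B_j}|c_k|\ \lesssim_p\ (2^j)^{1/p}\,\|T_n\|_p^*
\]
does not work as written. You propose to pass through $\|P_j\|_p^*$ and then invoke ``spectral restriction to a band bounded on $H_p$''. There are two problems. First, for $p\le 1$ spectral projections onto intervals are \emph{not} uniformly bounded on $L_p(\T)$ nor on the analytic Hardy space $H_p(\T)$ (if they were, taking differences would give $|c_m|\lesssim_p\|T_n\|_p^*$ uniformly in $m$, contradicting the sharp Hardy--Littlewood growth $|a_m|\asymp m^{1/p-1}$ in $H_p$). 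Second, even if band projection were bounded on some $H_p$, the hypothesis only controls $\|T_n\|_{L_p}^*$, and for $p<1$ this does not dominate the real-variable $H_p(\T)$-norm of $T_n$. The paper's Section~\ref{S7} does \emph{not} supply such a projection lemma; it treats the discrete sequence space $H_p(\Z)$ in connection with $E_p$, not band projections on $\TT_n$.

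That said, your block estimate is \emph{true}, via a different mechanism: shift $T_n$ by $e^{\mp inx}$ so that the band near $\pm n$ becomes the low-frequency end of an analytic (resp.\ anti-analytic) polynomial $S$ of degree $2n$; then $\|S\|_{H_p(\T)}=\|S\|_{L_p(\T)}=\|T_n\|_p^*$, and the classical Hardy--Littlewood coefficient bound $|\widehat S(m)|\lesssim_p (m+1)^{1/p-1}\|S\|_{H_p}$ gives the pointwise estimate $|c_k|\lesssim_p(n-|k|+1)^{1/p-1}\|T_n\|_p^*$. Summing this against $|k|^s$ directly yields the lemma, without any dyadic decomposition or projection. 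So your route can be repaired, but the repair is a pointwise $H_p$ coefficient bound after a modulation, not a boundedness-of-projection statement.

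The paper takes a quite different path. It first proves the entire-function analogue (Lemma~\ref{L3.5}) using the isomorphism $E_p\cong H_p(\Z)$ of Eoff and the atomic decomposition of $H_p(\Z)$; this is where the Hardy-space machinery actually enters. Lemma~\ref{L3.4} is then obtained by \emph{transference}: from $T_n$ one builds $f\in E_p$ by placing disjoint smooth bumps of width $\sim 1/n$ (or $\sim 1/s$ when $s>2n+1$) at the rescaled frequencies, checks $\|f\|_p\lesssim_p n^{1/p}\|T_n\|_p^*$ (resp.\ $s^{1/p}\|T_n\|_p^*$) and $\int_{-1}^1|\xi|^s|\widehat f(\xi)|\,d\xi\gtrsim n^{-s}\sum|c_k||k|^s$, and applies Lemma~\ref{L3.5}. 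So the logical order is opposite to what you anticipated: the continuous case is proved first and the trigonometric case is deduced from it, rather than the trigonometric case being handled by a direct dyadic argument.
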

Combining together Lemmas \ref{L3.5} and \ref{L3.4} with Bernstein's inequalities for entire functions of exponential type $1$ and trigonometric polynomials, we obtain Theorems \ref{T1.2} and \ref{T1.1}
for  $0<p<q\le\iy$ and $p\in(0,1]$.
\begin{corollary}\label{C3.5a}
If $s\in\N$ and $0<p<q\le \iy,\,p\in(0,1]$,
              then, for any $f\in E_p$,
              \ba
              \norm{ f^{(s)}}_q \lesssim_{p}  s^{{1}/{p}-{1}/{q}}\norm{ f}_p.
              \ea
              \end{corollary}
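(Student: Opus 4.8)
\textbf{Proof plan for Corollary \ref{C3.5a}.}
The plan is to interpolate between the two endpoint estimates already available: on the one hand, Lemma \ref{L3.5} gives the bound at $q=\infty$, namely $\norm{f^{(s)}}_\iy\lesssim_p s^{-1/p}\norm{f}_p$ for $f\in E_p$, $p\in(0,1]$; on the other hand, Bernstein's inequality in $L_p$ for entire functions of exponential type $1$ gives $\norm{f^{(s)}}_p\lesssim \norm{f}_p$ with an absolute constant (the exponential type being $1$, each differentiation costs a bounded factor). The target exponent $s^{1/p-1/q}$ is exactly what one gets by writing $1/q = (1-\theta)/p + \theta\cdot 0$ with $\theta = 1 - p/q\in[0,1)$ and combining $\norm{f^{(s)}}_q$ from $\norm{f^{(s)}}_p^{1-\theta}$ and $\norm{f^{(s)}}_\iy^{\theta}$: one obtains $s^{0\cdot(1-\theta)}\cdot s^{-\theta/p}$ on the right — so this naive recipe would give $s^{-\theta/p}$, which has the wrong sign. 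The correct bookkeeping is to apply the two estimates to $f^{(s)}$ viewed as an element of $E_p$ (so Bernstein is not reused), and to keep the $s$-power from Lemma \ref{L3.5} with the positive weight coming from the normalization; more precisely, writing $g:=f^{(s)}\in E_{1,p}$, the log-convexity inequality $\norm{g}_q\le \norm{g}_\iy^{1-p/q}\norm{g}_p^{p/q}$ yields $\norm{f^{(s)}}_q\le \big(s^{-1/p}\norm{f}_p\big)^{1-p/q}\big(C\norm{f}_p\big)^{p/q}=C^{p/q}\,s^{-(1/p)(1-p/q)}\norm{f}_p=C^{p/q}s^{1/q-1/p}\norm{f}_p$.

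Since that computation reproduces the decay exponent $1/q-1/p$, not the claimed growth exponent $1/p-1/q$, the only way the stated corollary can be correct is that the intended meaning differs from the literal one: in context (cf. \eqref{E1.2}, \eqref{E1.4}), the right-hand sides of Theorems \ref{T1.1} and \ref{T1.2} grow like $\sa^{s}(\sa/s)^{1/p-1/q}$, and Corollary \ref{C3.5a} is the special case $\sa=1$ of the upper bound, so the factor $\sa^{s}(\sa/s)^{1/p-1/q}$ becomes $s^{-(1/p-1/q)}=s^{1/q-1/p}$. Thus the displayed exponent ``$s^{1/p-1/q}$'' in the corollary is a typographical inversion of ``$s^{1/q-1/p}$'' (equivalently $s^{-(1/p-1/q)}$), and the proof is exactly the interpolation argument above. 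I would therefore carry out the following steps in order.

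First, restate the two inputs for $f\in E_p=E_{1,p}$: (i) the right inequality of \eqref{E3.6}, $\int_{-1}^1|\xi|^s|\h f(\xi)|\,d\xi\lesssim_p s^{-1/p}\norm{f}_p$, together with the left inequality $\norm{f^{(s)}}_\iy\le\int_{-1}^1|\xi|^s|\h f(\xi)|\,d\xi$, giving $\norm{f^{(s)}}_\iy\lesssim_p s^{-1/p}\norm{f}_p$; and (ii) Bernstein's inequality in $L_p$, $\norm{f^{(s)}}_p\lesssim_p\norm{f}_p$ (valid for all $p\in(0,\iy]$ for functions of exponential type $1$; for $p<1$ this is the Bernstein inequality in the quasi-Banach setting, which holds with a constant depending only on $p$). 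Second, observe that $f^{(s)}\in E_p$ as well (differentiation preserves exponential type and, by (ii), preserves $L_p$-membership). Third, apply the elementary log-convexity of $L_r$-quasinorms on $\R$: for $0<p<q\le\iy$ and any $g\in L_p\cap L_\iy$, $\norm{g}_q\le\norm{g}_\iy^{1-p/q}\norm{g}_p^{p/q}$. Fourth, substitute $g=f^{(s)}$ and plug in (i) and (ii):
\ba
\norm{f^{(s)}}_q\;\le\;\norm{f^{(s)}}_\iy^{\,1-p/q}\,\norm{f^{(s)}}_p^{\,p/q}
\;\lesssim_p\; \big(s^{-1/p}\norm{f}_p\big)^{1-p/q}\,\norm{f}_p^{\,p/q}
\;=\;s^{-(1/p)(1-p/q)}\,\norm{f}_p
\;=\;s^{1/q-1/p}\,\norm{f}_p,
\ea
which is the asserted estimate (with the exponent read as $s^{1/q-1/p}=s^{-(1/p-1/q)}$, consistent with \eqref{E1.4} at $\sa=1$). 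The constant depends only on $p$ because both inputs do and the interpolation step is constant-free. Finally, note that the case $q=\infty$ reduces directly to (i) with no interpolation needed, so the argument is complete for the full range $0<p<q\le\iy$, $p\in(0,1]$.

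The only real subtlety — and the step I would flag as the main point to get right — is the $s$-dependence of the Bernstein constant in input (ii): one must be sure that $\norm{f^{(s)}}_p\lesssim_p\norm{f}_p$ with a constant \emph{independent of $s$}. For $p\ge 1$ this is the classical iterated Bernstein inequality $\norm{f'}_p\le\norm{f}_p$ for type $1$, so $\norm{f^{(s)}}_p\le\norm{f}_p$. For $p\in(0,1)$ the sharp form is not $1$ but a constant $c_p>1$, giving $\norm{f^{(s)}}_p\le c_p^{\,s}\norm{f}_p$, which would ruin the estimate; so in that regime one should instead route through Theorem \ref{T1.1} (its $\lesssim$ part for $p\in(0,1]$, i.e. Lemma \ref{L3.4}) rather than a crude iterated Bernstein bound — or, more cleanly, avoid Bernstein in $L_p$ altogether and derive the $q<\infty$ case by the same atomic/Hardy-space machinery used for Lemma \ref{L3.5}, replacing the $L_\infty$ norm of $f^{(s)}$ by its $L_q$ norm and the pointwise bound $\sum|c_k||k|^s$ by an $\ell_q$-type estimate on the Fourier side. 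In the write-up I would present the interpolation proof for $q=\infty$ trivially and for $p\le 1<q$ state explicitly which known sharp Bernstein constant is being invoked, citing the quasi-Banach Bernstein inequality with its $p$-dependent (but $s$-independent, type-$1$) constant.
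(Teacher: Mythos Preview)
Your approach is exactly the paper's: log-convexity $\norm{f^{(s)}}_q\le\norm{f^{(s)}}_\iy^{1-p/q}\norm{f^{(s)}}_p^{p/q}$, followed by Lemma \ref{L3.5} for the $L_\iy$ factor and Bernstein's inequality (i.e., \eqref{E1.2} with $p=q$, $\sa=1$) for the $L_p$ factor. You are also right that the displayed exponent in the corollary should read $s^{1/q-1/p}$, consistent with \eqref{E1.4} at $\sa=1$; the paper's own computation in the proof gives $s^{1/q-1/p}$.

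The one place your discussion goes astray is the last paragraph. The sharp constant in Bernstein's inequality $\norm{f'}_p\le\sa\norm{f}_p$ for entire functions of exponential type is equal to $1$ for \emph{all} $p\in(0,\iy]$, not just $p\ge1$; this follows from Arestov's theorem that $\norm{T_n'}_p^*\le n\norm{T_n}_p^*$ holds with sharp constant $n$ for every $p>0$, transferred to $E_{\sa,p}$ by the limit relation in Lemma \ref{L5.1} (or by the standard Plancherel--P\'{o}lya discretization). Hence iteration gives $\norm{f^{(s)}}_p\le\norm{f}_p$ with no $s$-dependence whatsoever, and your proposed detours through Lemma \ref{L3.4} or a direct $L_q$ atomic argument are unnecessary. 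The paper simply invokes ``\eqref{E1.2} for $p=q$'' without elaborating on this point, so your caution is understandable, but the worry that $c_p>1$ would produce an unwanted $c_p^{\,s}$ factor is unfounded.
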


\begin{proof}
Using Bernstein's inequality (see (1.2) for $p = q$) and relation
\eqref{E3.6}, we obtain
\ba
\norm{ f^{(s)}}_q
\leq \norm{ f^{(s)}}_\iy^{1-p/q}\norm{ f^{(s)}}_p^{p/q}
\lesssim_{p}  s^{{1}/{q}-{1}/{p}}\norm{ f}_p.
\ea
\end{proof}

\begin{corollary}\label{C3.3} If $n,s\in\N$ and $0<p<q\leq\infty,\,p\in(0,1]$,
then for any $T_n\in\TT_n$,
\ba
\norm{ T_n^{(s)}}_q^*
\lesssim_{p} n^s \left(1+ \left({n}/{s}\right)^{{1}/{p}-{1}/{q}}\right)\norm{ T_n}_p.
\ea
\end{corollary}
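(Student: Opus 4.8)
The plan is to deduce Corollary \ref{C3.3} directly from Lemma \ref{L3.4} by combining it with Bernstein's inequality for trigonometric polynomials, in exactly the same spirit as the proof of Corollary \ref{C3.5a}. The starting point is the elementary interpolation inequality $\norm{T_n^{(s)}}_q^* \le (\norm{T_n^{(s)}}_\iy^*)^{1-p/q}(\norm{T_n^{(s)}}_p^*)^{p/q}$, valid for $0<p<q\le\iy$, which reduces matters to estimating the two endpoint norms separately.

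\begin{proof}
By the interpolation inequality
\ba
\norm{T_n^{(s)}}_q^*
\le \left(\norm{T_n^{(s)}}_\iy^*\right)^{1-p/q}\left(\norm{T_n^{(s)}}_p^*\right)^{p/q},
\ea
it suffices to bound the two factors on the right. For the $L_\iy$-factor, Lemma \ref{L3.4} gives
\ba
\norm{T_n^{(s)}}_\iy^*\lesssim_p n^s\left(1+\left(n/s\right)^{1/p}\right)\norm{T_n}_p^*.
\ea
For the $L_p$-factor, Bernstein's inequality in $L_p$ for $p\in(0,1]$ (the inequality \eqref{E1.1} with $q=p$, which is classical in this range) yields $\norm{T_n^{(s)}}_p^*\lesssim_p n^s\norm{T_n}_p^*$. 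Substituting both estimates,
\ba
\norm{T_n^{(s)}}_q^*
&\lesssim_p \left(n^s\left(1+(n/s)^{1/p}\right)\right)^{1-p/q}\left(n^s\right)^{p/q}\norm{T_n}_p^*\\
&= n^s\left(1+(n/s)^{1/p}\right)^{1-p/q}\norm{T_n}_p^*.
\ea
Since $s\le n$ forces $(n/s)^{1/p}\ge1$, we have $1+(n/s)^{1/p}\asymp (n/s)^{1/p}$, so $\left(1+(n/s)^{1/p}\right)^{1-p/q}\asymp (n/s)^{(1/p)(1-p/q)}=(n/s)^{1/p-1/q}$; while for $s>n$ the factor $1+(n/s)^{1/p}\asymp1$. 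In either case $\left(1+(n/s)^{1/p}\right)^{1-p/q}\lesssim 1+(n/s)^{1/p-1/q}$, and therefore
\ba
\norm{T_n^{(s)}}_q^*\lesssim_p n^s\left(1+(n/s)^{1/p-1/q}\right)\norm{T_n}_p^*,
\ea
as claimed.
\end{proof}

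There is essentially no obstacle here: the content lies entirely in Lemma \ref{L3.4}, whose proof (via discrete Hardy spaces) is the genuinely hard part and is deferred to Section \ref{S7}. The only minor points to get right in the argument above are that $\norm{\cdot}_p^*$ is merely a quasi-norm when $p<1$, so one should use the submultiplicativity built into the interpolation inequality rather than Hölder directly, and that one should invoke the $L_p$-Bernstein inequality for $0<p<1$ (which holds with an absolute constant, independent of $s$, for the $s$-th derivative up to the factor $n^s$). Combining the polynomial power $n^s$ coming from both factors with the mixed term from Lemma \ref{L3.4} produces exactly the exponent $1/p-1/q$ in the bracket, matching the lower bound from Lemma \ref{L2.2} and hence completing Theorem \ref{T1.1} in the range $p\in(0,1]$.
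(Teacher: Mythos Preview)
Your proof is correct and follows exactly the approach the paper indicates (the paper merely says ``The proof is similar to that of Corollary \ref{C3.5a}''): interpolate between $L_\infty$ and $L_p$, apply Lemma \ref{L3.4} to the first factor and Bernstein's inequality \eqref{E1.1} with $q=p$ to the second, and then simplify the resulting power. Your added arithmetic verifying $\left(1+(n/s)^{1/p}\right)^{1-p/q}\lesssim 1+(n/s)^{1/p-1/q}$ by splitting into the cases $s\le n$ and $s>n$ is a helpful detail the paper omits.
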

The proof is similar to that of Corollary \ref{C3.5a}.

We first prove  Lemma \ref{L3.5} for special functions from $E_p$.

          \begin{lemma}\label{L3.6}
              Let $s\in\N$ and $p\in(0,1]$. In addition, let $a$ be an $H_p$-atom as defined in
               Definition \ref{D7.2a}, and let $f_a$ be defined by \eqref{E7.1a}. Then
            \beq\label{E3.7}
                 \int_{-1}^1 \vert\xi\vert^{s} \vert\widehat{f_a}(\xi)\vert\,d\xi \lesssim_p   s^{-1/p} \norm{f_a}_p.
          \eeq
          \end{lemma}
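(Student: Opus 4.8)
The plan is to reduce \eqref{E3.7} to a one–dimensional integral estimate and then play the vanishing moments of the atom against the fact that the weight $|\xi|^s$ is concentrated near $\xi=\pm1$. By \eqref{E7.1a} and $\widehat{\sinc}(\xi)=\pi\chi_{[-1,1]}(\xi)$, the transform $\widehat{f_a}$ is supported in $[-1,1]$ and there equals, up to the constant $\pi$ and a harmless shift of the argument, the trigonometric sum $A(\xi):=\sum_k a_k e^{-i\pi k\xi}$; thus the left side of \eqref{E3.7} is $\pi\int_{-1}^1|\xi|^s|A(\xi)|\,d\xi$. Write $L$ for the length of the $\Z$-interval carrying $a$ and $k_0$ for its centre. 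The size condition in Definition \ref{D7.2a} gives $\sum_k|a_k|\lesssim_p L^{1-1/p}$, and — this is the only place the atom structure enters — the moment conditions of Definition \ref{D7.2a} (which are exactly the ones that make $f_a\in E_p$) force the $2$-periodic function $A$ to vanish to some order $m\ge\lfloor 1/p\rfloor$ at $\xi=\pm1$.

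The heart of the argument is the pointwise bound
\[
|A(\xi)|\ \lesssim_p\ L^{1-1/p}\,\min\bigl\{\,(L(1-|\xi|))^{m},\ 1\,\bigr\},\qquad \xi\in[-1,1].
\]
The bound by $L^{1-1/p}$ is just $|A(\xi)|\le\sum_k|a_k|$. For the other bound I first factor out the oscillation, $A(\xi)=e^{-i\pi k_0\xi}\widetilde A(\xi)$ with $\widetilde A(\xi)=\sum_k a_k e^{-i\pi(k-k_0)\xi}$: by the moment conditions and the binomial formula $\widetilde A$ still vanishes to order $m$ at $\xi=\pm1$ and has period $2$, while now $\sup_{\R}|\widetilde A^{(m)}|\le\pi^m L^m\sum_k|a_k|\lesssim_p L^{m+1-1/p}$, with a constant \emph{independent of $k_0$}. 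Taylor's formula with remainder, applied at whichever of the zeros $\pm1$ is nearer to $\xi$, then gives $|A(\xi)|=|\widetilde A(\xi)|\lesssim_p L^{m+1-1/p}(1-|\xi|)^{m}$.

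Integrating the pointwise bound, and using $\int_{-1}^1|\xi|^s(1-|\xi|)^m\,d\xi=2B(s+1,m+1)\lesssim_m s^{-m-1}$ (elementary, or via \eqref{E3.4a}, \eqref{E3.4b}) together with $\int_{-1}^1|\xi|^s\,d\xi\lesssim s^{-1}$, I obtain
\[
\int_{-1}^1|\xi|^s\,|\widehat{f_a}(\xi)|\,d\xi\ \lesssim_p\ L^{1-1/p}\,\min\bigl\{\,L^{m}s^{-m-1},\ s^{-1}\,\bigr\}.
\]
A two–line case analysis then closes the proof. If $L\le s$, keep the first term: $L^{1-1/p}L^{m}s^{-m-1}=L^{m+1-1/p}s^{-m-1}\le s^{m+1-1/p}s^{-m-1}=s^{-1/p}$, using $m+1-1/p\ge0$. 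If $L>s$, keep the second: $L^{1-1/p}s^{-1}=L^{-(1/p-1)}s^{-1}\le s^{-(1/p-1)}s^{-1}=s^{-1/p}$, using $1/p-1\ge0$. Hence $\int_{-1}^1|\xi|^s|\widehat{f_a}(\xi)|\,d\xi\lesssim_p s^{-1/p}$, and since $\norm{f_a}_p\asymp_p1$ for a normalized $H_p$-atom (part of the discrete Hardy space machinery of Section \ref{S7}), this is \eqref{E3.7}.

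The step I expect to be the real obstacle is the one packaged into the first two paragraphs: reading off from Definition \ref{D7.2a} that $\widehat{f_a}$ vanishes to the correct order precisely at $\xi=\pm1$ (and not, say, at $\xi=0$, where the weight is harmless anyway) — this is why the relevant moment conditions in the discrete Hardy space must be the ``twisted'' ones, which is also exactly what guarantees $f_a\in E_p$ — and controlling $\sup_{\R}|\widetilde A^{(m)}|$ by a constant that does not see the position $k_0$ of the atom on the line. Everything past the pointwise bound is routine computation plus bookkeeping of $m=\lfloor1/p\rfloor$.
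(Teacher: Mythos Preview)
Your approach is exactly the paper's: compute $\widehat{f_a}$, use the atom's moment conditions together with Taylor's formula at $\xi=\pm1$ to get the pointwise bound $\min\{L^{m+1-1/p}(1-|\xi|)^m,\,L^{1-1/p}\}$, integrate against $|\xi|^s$ via the beta function, and split into the cases $L\le s$ and $L>s$.

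The only slip is the one you yourself flag. As written, your $A(\xi)=\sum_k a_k e^{-i\pi k\xi}$ vanishes to order $m$ at $\xi=0$, \emph{not} at $\xi=\pm1$: just compute $A^{(j)}(0)=(-i\pi)^j\sum_k a_k k^j=0$. The moment conditions in Definition~\ref{D7.2a} are the \emph{standard} ones $\sum_k a_k k^j=0$, not ``twisted''. What moves the zero from $0$ to $\pm1$ is the factor $(-1)^k$ built into \eqref{E7.1a}: one has $\widehat{f_a}(\xi)=\pi\,\chi_{(-1,1)}(\xi)\,A(\xi+1)$, so your ``harmless shift'' is in fact the whole point and cannot be dropped from the integral. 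Replace $A$ by $B(\xi):=A(\xi+1)=\sum_k(-1)^k a_k e^{-i\pi k\xi}$ (this is the paper's $g$, after centring at the midpoint $M$); then $B^{(j)}(\pm1)=(-i\pi)^j\sum_k a_k k^j=0$ for $0\le j\le m-1$, the centring controls $\sup|B^{(m)}|$ exactly as you describe, and your argument goes through verbatim.

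One cosmetic remark: for an $H_p$-atom one has only $\|f_a\|_p\lesssim_p1$, not $\asymp_p1$ (scale any atom by a small constant). The paper shares this imprecision in the statement; what is actually proved, and what is used in the proof of Lemma~\ref{L3.5}, is the clean bound $\int_{-1}^1|\xi|^s|\widehat{f_a}(\xi)|\,d\xi\lesssim_p s^{-1/p}$.
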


          \begin{proof}
Let the interval $I:=[-N+M,N+M],\,N\in\N,\,M\in\Z$, contain the support of $a$. It follows from the properties of the $H_p$-atoms
(see Definition \ref{D7.2a})
 that $\norm{a}_\infty \lesssim N^{-1/p}$
              and $\sum_{k=-N}^N a_{k+M} (k+M)^j=0$ for any integer $j$ such that  $0\leq j \leq p^{-1}-1$.
              It is easy to show by induction that
              $\sum_{k=-N}^N a_{k+M} k^j=0$ for $0\leq j \leq p^{-1}-1$.

              In addition, $a\in H_p(\Z)$ by Theorem \ref{T7.3}.
Therefore,
              $f_a(x)= \sum_{k=-N+M}^{N+M} (-1)^k a_k \sinc({x-k\pi})$ belongs to $E_p$
              by Theorem \ref{T7.2}
              and $\left\|f_a\right\|_1<\iy$ by \eqref{E1.2}. Then we have
              \ba
              \widehat{f_a}(\xi)
               =(-1)^M e^{-iM\pi \xi}\sum_{k=-N}^N (-1)^{k} a_{k+M} e^{-ik\pi \xi}
               \left\{\begin{array}{lll}
               \pi, &\vert\xi\vert<1,\\
               \pi/2, &\vert\xi\vert=1,\\
               0, &\vert\xi\vert>1.
               \end{array}\right.
               \ea
              Furthermore, setting
              \ba
              g(\xi):=\pi \sum_{k=-N}^N (-1)^{k} a_{k+M} e^{-ik\pi\xi},\qquad j_0:=\lfloor p^{-1} -1\rfloor,
              \ea
we observe that the following properties hold:
              \begin{enumerate}
                  \item[1)]  $|g(\xi)|=|\widehat{f}_a(\xi)|,\,\xi\in(-1,1);$
                  \item[2)]  $|g(\xi)| \lesssim N^{1-1/p},\,\xi\in\R;$
                  \item[3)] $g^{(j)}(1)=(-i)^j\pi^{j+1}\sum_{k=-N}^N a_{k+M} k^j=0$
                  for $0\leq j \leq j_0$;
                  \item[4)]
                  $\left\vert g^{(j)}(\xi)\right\vert \lesssim N^{j+1-1/p}$
                  for $\xi\in\R$ and $j\geq 0$;
                  \item[5)] $|g(\xi)|\lesssim (1-\xi)^{j_0+1} N^{j_0+2-1/p}
                  ,\,\xi\in[-1,1].$
                   \end{enumerate}

                  Note that the last property follows from property 4) and from
                  Taylor's theorem for $g$ at $\xi=1$ with the Lagrange form of the remainder
                  since the first $j_0$ derivatives of $g$ vanish at $\xi=1$
                  by property 3).
             To conclude the proof of the lemma, we show that
             \beq\label{E3.8}
             \int_0 ^1 \xi^{s} |{g}(\xi)| d \xi \lesssim_p s^{-1/p}.
             \eeq
              The estimate
              \beq\label{E3.9}
               \int_{-1} ^0 |\xi|^{s} |{g}(\xi)| d \xi\lesssim_p s^{-1/p}.
              \eeq
              can be established similarly.

             If $N>s$, then using property 2), we deduce that
             \ba
                 \int_0 ^1 \xi^{s} |{g}(\xi)| d \xi\lesssim_p   N^{1-1/p} s^{-1} \leq s^{-1/p}.
                \ea
            If $N\leq s$, then property 5) and relations \eqref{E3.4a} and \eqref{E3.4b} yield
                 \ba
                 \int _{0}^1 \xi^s |{g}(\xi)| d \xi
                 \lesssim N^{j_0+2-1/p}\int_{0} ^1  \xi^s (1-\xi)^{j_0+1}  d \xi
                 &\lesssim_p&N^{j_0+2-1/p} \Gamma(s+1)/\Gamma\left(s+j_0+3\right)\\
                 &\lesssim_p& N^{j_0+2-1/p} s^{-2-j_0} \leq s^{-1/p}.
                 \ea
                 Hence, \eqref{E3.8} and analogously \eqref{E3.9} follow. Finally, combining \eqref{E3.8} and \eqref{E3.9} with property 1), we arrive at \eqref{E3.7}.
                 \end{proof}

Having \eqref{E3.6} for functions $f_a$ in hand,
we use the atomic structure of the discrete Hardy spaces given in Theorem \ref{T7.3} to extend Lemma \ref{L3.6} to any function $f$.

\begin{proof}[Proof of Lemma \ref{L3.5}]
By Theorem \ref{T7.2}, for $f\in E_p$ there exists a unique $a\in H_p(\Z)$ such that
$f=f_a$ with $\norm{f}_p \asymp_p \norm{a}_{H_p(\Z)}$. Furthermore, by Theorem      \ref{T7.3},  $a$ has an atomic decomposition $a=\sum_{d=0}^\infty \lambda _d a(d)$, where $a(0),\,a(1),\ldots,$ are $H_p$-atoms,
 with $\norm{a}_{H_p(\Z)} \asymp_p \norm{\lambda}_{\ell_p(\Z_+)}$.

              Then, $f_a=\sum_{d=0}^\infty \lambda_d f_{a(d)}$ and since $\left\|f_a\right\|_1<\iy$ by \eqref{E1.2},
              we also have
              $\widehat{f_a}=\sum_{d=0}^\infty \lambda_d \widehat{f}_{a(d)}$.
              Therefore, using Lemma \ref{L3.6}, we conclude that
              \ba
              \int_{-1}^1 |\xi|^s |\widehat{f}_a(\xi)| d\xi
              &\leq& \int_{-1}^1 |\xi|^s \sum_{d=0}^\infty |\lambda _d| |\widehat{f}_{a(d)}(\xi)| d\xi
              =  \sum_{d=0}^\infty |\lambda_{d}| \int_{-1}^1 |\xi|^s   |\widehat{f}_{a(d)}(\xi)|d\xi
              \\
              &\lesssim_p& s^{-1/p} \sum_{d=0}^\infty |\lambda _d|
              \leq s^{-1/p} \norm{\lambda}_{\ell_p(\Z_+)}
              \lesssim_p s^{-1/p}  \norm{f}_p.
              \ea
\end{proof}
We now use Lemma \ref{L3.5} to obtain Lemma \ref{L3.4} by a discretization argument.
        \begin{proof}[Proof of Lemma \ref{L3.4}]

           Let $T_n(x)=\sum_{k=-n}^n c_k e^{ik x}$ and let $\phi$ be a non-negative bump function supported in $[-1,1]$ such that $\phi(\tau)= 1$ for $|\tau|\leq {1}/{2}$.

           If $2n+1\geq s$, let us define a function $f$ by the following formula for its Fourier transform:
        \beq\label{E3.13}
           \widehat{f}(\xi):=(2n+1)\sum_{k=-n}^n c_{k} \phi\left((2n+1)\xi - 2k\right).
            \eeq
            Then  $\phi\left((2n+1)\xi - 2k\right)$
            is supported in $I_k:=\left[\frac{2k-1}{2n+1},\frac{2k+1}{2n+1}\right], \,-n\le k\le n$,
            $\widehat{f}$ is supported in $[-1,1]$, and moreover,
            \beq\label{E3.14}
            \widehat{f}(\xi)=(2n+1)c_{k} \phi\left((2n+1)\xi - 2k\right),  \qquad \xi\in I_k,\quad
            -n\le k\le n.
            \eeq
            Next, applying the inverse Fourier transform to both sides of
            \eqref{E3.13}, we observe that the function
\ba
f(x)=\Check{\phi}\left(\frac{x}{2n+1}\right)\sum_{k=-n}^n c_{k} e^{\frac{2ikx}{2n+1}}= \Check{\phi}\left(\frac{x}{2n+1}\right)T_n\left(\frac{2x}{2n+1}\right)
\ea
 belongs to $E_p$ and
\beq\label{E3.15}
      \norm{f}_p\lesssim_p n^{{1}/{p}} \norm{T_n}_p^*.
      \eeq
      Indeed,
      \ba
      \norm{f}_p
      =(n+1/2)^{1/p}\norm{\Phi_p T_n}_p^*
      \leq (n+1/2)^{1/p}\norm{\Phi_p}_\iy^* \norm{T_n}_p^*,
      \ea
      where
      $\Phi_p(y):=\left(\sum_{k\in\Z}\left\vert\Check{\phi}(y/2+k\pi)\right\vert^p\right)^{1/p},\,y\in[-\pi,\pi)$.

     Furthermore, using \eqref{E3.14} and
     taking into account that $(n+1/2)^s\lesssim n^s$ for $2n+1\geq s$, we obtain
     \bna\label{E3.16}
     \int_{-1}^1 \vert\xi\vert^s \vert\widehat{f}(\xi)\vert\,d\xi
     &=&(2n+1)\int_{-1}^1|\xi|^s  \left\vert\sum_{k=-n}^n c_k   \phi\left((2n+1)\xi -2k\right)\right\vert d \xi
     \\
     &=& (2n+1)\sum_{k=-n}^n |c_k|\int_{I_k}  |\xi|^s   \phi\left((2n+1)\xi -2k\right) d \xi
     \nonumber\\
     &\geq& (2n+1)\sum_{k=-n}^n |c_k|
     \frac{\left\vert (k+1/4)^{s+1}- (k-1/4)^{s+1}\right\vert}{(s+1)(n+1/2)^{s+1}}
     \nonumber\\
     &\gtrsim& n^{-s}\sum_{k=-n}^n |c_k||k|^s.
     \nonumber
     \ena
      Therefore, using Lemma \ref{L3.5} and relations \eqref{E3.15} and \eqref{E3.16}, we conclude that
 \beq\label{E3.17}
\sum_{k=-n}^n |c_k||k|^s \lesssim_p n^{s+1/p} s^{-1/p} \norm{T_n}_p^*,\qquad 2n+1\geq s.
\eeq

If $2n+1<s$ and $s\ge 2$, we proceed by using a modification of the previous argument.
     Let
       \ba
       \widehat{f}(\xi):=s\sum_{k=-n}^n c_k \phi\left(s\xi - (s-1){k}/{n}\right).
       \ea
    Then
    $\phi\left(s\xi - (s-1){k}/{n}\right)$ is supported
    in $I_k:=\left[-\frac{1}{s}+\frac{(s-1)k}{ns},\frac{1}{s}+\frac{(s-1)k}{ns}\right]$, $-n\le k\le n$,
    $\widehat{f}$ is supported in $[-1,1]$, and moreover,
            \beq\label{E3.17a}
            \widehat{f}(\xi)=s c_k \phi\left(s\xi - (s-1){k}/{n}\right),\qquad \xi\in I_k,\quad -n\le k\le n,
            \eeq
    since the family $\left\{ I_k\right\}_{k=-n}^n$  is pairwise disjoint
    by the assumption $2n+1<s$.
    Next, the function
      \ba
      f(x)=\Check{\phi}(x/s)\sum_{k=-n}^n c_k e^{(1-s^{-1})\frac{ikx}{n}}
          = \Check{\phi}\left(\frac{x}{s}\right)T_n\left(\frac{\left(1-s^{-1}\right)x}{n}\right)
      \ea
     belongs to $E_p$ and
     \beq\label{E3.18}
    \norm{f}_p\lesssim_p s^{{1}/{p}} \norm{T_n}_p^*.
    \eeq
      Indeed, note that  the polynomial $T_n\left((2 \pi/\alpha) x\right)$ is $\alpha$-periodic with $\alpha:={2\pi n}/{(1-s^{-1})}$, and since $\Check{\phi}$ is rapidly decaying, it satisfies the estimate
      $\left|\Check{\phi}(x/s)\right|\lesssim_p {(1+|x/s|)^{-2/p}}$.
     Then we have
     \bna\label{E3.18a}
     \norm{f}_p
     &\lesssim_p& \left(\int_\R (1+|x/s|)^{-2}\left|T_n((2\pi/\alpha)x)\right|^p\right)^{1/p}\\
     \nonumber
     &\leq&
     \left(\sum_{j=0}^\iy
     \frac1{(1+j)^{2}}
     \int_{js}^{(j+1)s}\left(\left|T_n((2\pi/\alpha)x)\right|^p
     +\left|T_n(-(2\pi/\alpha)x)\right|^p\right)dx
     \right)^{1/p}\\
     \nonumber
     &\leq&
     \left(\sum_{j=0}^\iy
     \frac1{(1+j)^{2}}
     \int_{\alpha \lfloor js/\alpha\rfloor}
     ^{\alpha \lceil (j+1)s/\alpha\rceil}\left(\left|T_n((2\pi/\alpha)x)\right|^p
     +\left|T_n(-(2\pi/\alpha)x)\right|^p\right)dx
     \right)^{1/p}\\
     \nonumber
     &=& \left(\sum_{j=0}^\iy
     \frac{\alpha}{(1+j)^{2}}
     \int_{ \lfloor js/\alpha\rfloor}
     ^{ \lceil (j+1)s/\alpha\rceil}\left(\left|T_n(2\pi x)\right|^p
     +\left|T_n(-2\pi x)\right|^p\right)dx
     \right)^{1/p}\\
     \nonumber
     &\lesssim_p&
   \left(
   \sum_{j=0}^\iy
     \frac{1}{(1+j)^{2}}\right)^{1/p}
     \left(
  (2\alpha + s)
   \int_{0}^{1}\left|T_n( 2\pi x)\right|^pdx
     \right)^{1/p}.
     \ena
Since $\alpha< \pi s$, we see  that \eqref{E3.18a} implies \eqref{E3.18}.
    Finally, using \eqref{E3.17a}, we obtain
       \bna\label{E3.19}
       \int_{-1}^1 \vert\xi\vert^s \vert\widehat{f}(\xi)\vert\,d\xi
       &=& s\int_{-1}^1  |\xi|^s  \left|\sum_{k=-n}^n c_k \phi\left(s(\xi - (1-s^{-1}){k}/{n})\right) \right| d \xi
       \\
       &=& s\sum_{k=-n}^n |c_k|\int_{I_k}  |\xi|^s  \phi\left(s(\xi - (1-s^{-1}){k}/{n})\right) d \xi
       \nonumber\\
       &\geq& \frac{s}{s+1}\sum_{k=-n}^n |c_k|
       \left\vert\left(\frac{(s-1)k}{s n}+\frac{1}{2s}\right)^{s+1}
       -\left(\frac{(s-1)k}{s n}-\frac{1}{2s}\right)^{s+1}\right\vert
       \nonumber\\
     &\geq& \sum_{k=-n}^n |c_k| \left ((1-s^{-1}){|k|}/{n}\right)^s
     \gtrsim n^{-s}\sum_{k=-n}^n |c_k||k|^s.
     \nonumber
     \ena
     Therefore, using Lemma \ref{L3.5} and relations \eqref{E3.18} and \eqref{E3.19}, we conclude that
     \beq\label{E3.20}
     \sum_{k=-n}^n |c_k||k|^s \lesssim_p n^{s} \norm{T_n}_p^*,\qquad 2n+1< s.
     \eeq
     Thus, the right relation of
     \eqref{E3.5} follows from \eqref{E3.17} and \eqref{E3.20}.
     Lemma \ref{L3.4} and Corollary \ref{C3.3} are established.
        \end{proof}

\vspace{.2cm}
\section{Properties of Concave Sequences and Polynomials from $\TT_{n,\mbox{c}}$ }\label{S4}

Throughout this section, $c=(c_0, \ldots, c_{n})$ is a concave sequence (see Definition \ref{D1.3}).
The proof of Theorem \ref{T1.4} is based on several technical lemmas and on  Proposition \ref{P4.4}, all of which are of independent interest.

%First, we discuss two properties of concave sequences and a technical lemma.
Let us define the following  sequences:

\bna \label{E4.3}
   &&v_l(k):=\min\left\{1- \frac{k-1}{n}, 1- \frac{l-1}{n}\right\},
\qquad l =0,\ldots,n,\quad k =0,\ldots,n; \\
  &&V_l:=(v_l(0),\ldots,v_l(n)),\qquad l =0,\ldots,n.
  \nonumber
\ena
The sequences $V_l,\, l =0,\ldots,n$, are concave. Moreover,
it turns out that every concave sequence can be represented as a linear combination of
$V_l,\, l =0,\ldots,n$, with nonnegative coefficients.

\begin{lemma}\label{L4.2}
    If $c$ is concave, then there exist numbers $h_l\geq 0, \, l=0,\ldots,n$, such that
    $c=\sum_{l=0}^n h_l V_l$.
\end{lemma}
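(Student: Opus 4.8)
The plan is to pass from the sequences to their difference sequences, where both the target $c$ and the generators $V_l$ become transparent. Throughout, write $\Delta x_k := x_k - x_{k+1}$ for $k = 0, \ldots, n$ with the convention $x_{n+1} := 0$ (as already in Definition \ref{D1.3}, and extended to the $V_l$'s). Then $x_k = \sum_{j=k}^n \Delta x_j$, so the linear map $x \mapsto (\Delta x_0, \ldots, \Delta x_n)$ is a bijection of $\R^{n+1}$; consequently $c = \sum_{l=0}^n h_l V_l$ holds if and only if $\Delta c_k = \sum_{l=0}^n h_l\, \Delta v_l(k)$ for every $k = 0, \ldots, n$.

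The computation at the heart of the argument is the identity
\beq\label{Edv}
\Delta v_l(k) = \begin{cases} 1/n, & l \le k, \\ 0, & l > k, \end{cases} \qquad k, l = 0, \ldots, n,
\eeq
which follows at once from $v_l(k) = 1 - (\max\{k,l\} - 1)/n$: if $k < l$ then $v_l(k) = v_l(k+1) = 1 - (l-1)/n$; if $l \le k \le n-1$ then $v_l(k) - v_l(k+1) = 1/n$; and for $k = n$ one has $v_l(n) - v_l(n+1) = 1/n$, using $v_l(n+1) = 0$ and $l \le n$. Substituting \eqref{Edv}, the required condition becomes the triangular system $\Delta c_k = \frac1n \sum_{l=0}^k h_l$ for $k = 0, \ldots, n$; solving it from the top produces the unique candidate
\[
h_0 = n\, \Delta c_0, \qquad h_k = n\bigl(\Delta c_k - \Delta c_{k-1}\bigr), \quad k = 1, \ldots, n.
\]

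It then remains to observe that these $h_l$ are nonnegative, and this is precisely the content of the concavity hypothesis: $h_0 = n(c_0 - c_1) \ge 0$ since $c$ is non-increasing, while $h_k = n(\Delta c_k - \Delta c_{k-1}) \ge 0$ for $k \ge 1$ by the monotonicity $\Delta c_0 \le \cdots \le \Delta c_n$ in Definition \ref{D1.3}. Since $c$ and $\sum_{l=0}^n h_l V_l$ now have the same difference sequence and both vanish at index $n+1$, the bijectivity noted above yields $c = \sum_{l=0}^n h_l V_l$, which completes the proof. I expect no serious obstacle here; the only point requiring care is the boundary case $k = n$ in \eqref{Edv}, which is why the convention $v_l(n+1) = 0$ must be in force, and a quick sanity check (e.g. $n = 2$) confirms the formulas.
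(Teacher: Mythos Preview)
Your proof is correct and takes essentially the same route as the paper: both arguments are second-order Abel summation, producing the identical coefficients $h_0=n\,\Delta c_0$ and $h_l=n(\Delta c_l-\Delta c_{l-1})=-n\,\Delta^2 c_{l-1}$ for $l\ge 1$. The only cosmetic difference is that the paper expands $c_k=\sum_{j\ge k}\Delta c_j$ directly and rearranges the double sum, while you pass through the bijection $x\mapsto(\Delta x_0,\ldots,\Delta x_n)$ and solve the resulting triangular system; the computations are equivalent.
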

\begin{proof}
    Since
    $c_k=  \sum_{j=k}^n \Delta c_j$
    with $c_{n+1}:=0$ and
    $\Delta c_j= \Delta c_0+\sum_{l=1}^ {j} \left(-\Delta^2 c_{l-1}\right)$,
    we have
    $ c_k=\sum_{j=k}^n \left(\Delta c_0 +\sum_{l=1}^{j} \left(-\Delta^2 c_{l-1}\right)\right),\,
    0\leq k,j\leq n$. Hence for  $0\leq k\leq n$,
    \ba
    c_k= \Delta c_0 \,(n-k+1)   +\sum_{l=1}^{n} \left(-\Delta^2 c_{l-1}\right) \min\{n-k+1,n-l+1\}
    :=n \sum_{l=0}^n h_l v_l(k),
    \ea
   since $n-k+1 = \min\{n-k+1, n+1\}=nv_0(k)$.
\end{proof}

The next three lemmas contain  technical estimates for  concave sequences.
\begin{lemma}\label{L4.2a}
The following relations are valid:
\beq\label{E4.4a}
       \sum_{k=0}^n \frac{v_l(k)}{n-k+1}
         \asymp \left(1- \frac{l-1}{n}\right) \left(\left|\log\left(1-\frac{l-1}{n}\right)\right|+1\right),\qquad
        0\leq l\leq n;
       \eeq
       \beq\label{E4.4b}
\min_{0\leq l\leq n-1} \left(1-\frac{l}{n}\right)\left(\left\vert\log
      \left(1-\frac{l}{n}\right)\right\vert+1\right)
      =\frac{\log n+1}{n}.
\eeq
\end{lemma}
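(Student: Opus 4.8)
## Plan for the proof of Lemma \ref{L4.2a}

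The two relations are essentially elementary computations, and I would treat them separately.

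For \eqref{E4.4a}, write $t:=1-\frac{l-1}{n}$, so that $t\in\big[\tfrac1n,1+\tfrac1n\big]$, and split the sum at $k=l-1$. For $k\le l-1$ one has $v_l(k)=1-\frac{l-1}{n}=t$, so that part of the sum equals $t\sum_{k=0}^{l-1}\frac{1}{n-k+1}=t\sum_{j=n-l+2}^{n+1}\frac1j$, which is $\asymp t\log\!\frac{n+1}{n-l+2}\asymp t\,|\log t|$ up to a bounded additive term. For $k\ge l$ one has $v_l(k)=1-\frac{k-1}{n}=\frac{n-k+1}{n}$, so that part of the sum is $\frac1n\sum_{k=l}^{n}1=\frac{n-l+1}{n}\asymp t$ (with an additive $O(1/n)$, which is harmless since $t\ge \tfrac1n$). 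Adding the two contributions and noting $|\log t|+1\asymp 1$ when $t$ is bounded away from $0$, and that the $\log$ term dominates the additive constants precisely when $t$ is small, gives the stated two-sided bound; I would write the comparison of $\sum 1/j$ with $\int dx/x$ explicitly to get clean constants, and handle the endpoint cases $l=0$ and $l=n$ (where $t=1+\frac1n$ and $t=\frac1n$ respectively) by direct inspection.

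For \eqref{E4.4b}, set $\psi(l):=\big(1-\tfrac{l}{n}\big)\big(|\log(1-\tfrac{l}{n})|+1\big)$ for $0\le l\le n-1$, i.e.\ $\psi(l)=u(1-\log u)$ with $u:=1-\tfrac{l}{n}\in(0,1]$. Since the function $g(u)=u(1-\log u)$ has derivative $g'(u)=-\log u\ge 0$ on $(0,1]$, $g$ is nondecreasing there, so $\psi$ is minimized at the largest admissible $l$, namely $l=n-1$, where $u=\tfrac1n$ and $\psi(n-1)=\tfrac1n(1+\log n)=\frac{\log n+1}{n}$. This is an exact identity, not merely an asymptotic one, so no constants are involved.

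The only mild obstacle is the bookkeeping in \eqref{E4.4a}: keeping track of the additive $O(1)$ and $O(1/n)$ terms and checking they are absorbed by the main term $t(|\log t|+1)$ uniformly in $l$, including the regime $l$ close to $n$ where $t\asymp 1/n$ and the $\log$ is the dominant factor. Everything else is a one-line monotonicity argument or a standard harmonic-sum estimate.
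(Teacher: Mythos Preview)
Your proposal is correct and follows essentially the same route as the paper: for \eqref{E4.4a} the paper also splits the sum at $k=l$, obtaining the closed form $\left(1-\tfrac{l-1}{n}\right)\!\left(\sum_{m=n-l+1}^{n+1}\tfrac1m+1\right)-\tfrac1n$ and then compares the harmonic sum with $|\log(1-\tfrac{l-1}{n})|$, while for \eqref{E4.4b} it uses exactly the monotonicity of $y(1-\log y)$ on $(0,1]$. One small simplification: your second piece $\tfrac{n-l+1}{n}$ is \emph{exactly} $t$, not merely $t+O(1/n)$, so the bookkeeping you flag as the ``only mild obstacle'' is in fact trivial.
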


\begin{proof}
First, we note that
 the function $y(\log (1/y)+1)$ is increasing on $(0,1)$, which yields \eqref{E4.4b}.
Next, by the definition of $v_l(k)$,
       \beq\label{E4.4c}
       \sum_{k=0}^n \frac{v_l(k)}{n-k+1}
        =\left(1- \frac{l-1}{n}\right) \left(\sum_{m=n-l+1}^{n+1}\frac{1}{m}+1\right)-\frac{1}{n},
        \qquad
        0\leq l\leq n.
       \eeq
 Hence, \eqref{E4.4a} is valid for $l=0$.
       In addition,
       \beq\label{E4.4d}
       \sum_{m=n-l+1}^{n+1}\frac{1}{m}+1
       \asymp
       \left|\log\left(1-\frac{l-1}{n}\right)\right|+1,
       \qquad
        1\leq l\leq n.
       \eeq
       Thus, \eqref{E4.4a} follows from relations \eqref{E4.4b}--\eqref{E4.4d}.
\end{proof}
\begin{lemma}\label{L4.2b}
For $n,s\in\N$ we have % be large. Then,
    \beq\label{E4.5a}
    \sup_{\tau\in[0,1]} H_{n,s}(\tau)
    :=
    \sup_{\tau\in[0,1]} \frac{ \tau^{s+1}}{1- \log\left(1+n^{-1} -\tau\right)} \lesssim \frac1{\log (s +2)}+ \frac1{\log (n +2)}.
    \eeq
\end{lemma}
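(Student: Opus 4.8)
The plan is to remove $\tau$ by the substitution $w:=1-\tau\in[0,1]$, under which $1+n^{-1}-\tau$ becomes $w+n^{-1}$, and then to reduce the whole claim to a single lower bound. Since $1-w\le e^{-w}$, for every $\tau\in[0,1]$,
\[
H_{n,s}(\tau)=\frac{(1-w)^{s+1}}{1-\log\bigl(w+n^{-1}\bigr)}\le\frac{e^{-(s+1)w}}{1-\log\bigl(w+n^{-1}\bigr)}=\frac{1}{g(w)},
\]
where, writing $a:=1/(s+1)\in(0,1/2]$ and $b:=1/n\in(0,1]$, I set $g(w):=e^{w/a}\bigl(1-\log(w+b)\bigr)$ on $[0,1]$; the denominator is harmless, as $w+b\le 2$ gives $1-\log(w+b)\ge 1-\log 2>0$. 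Thus it suffices to prove
\[
\min_{w\in[0,1]}g(w)\gtrsim\Bigl(\tfrac{1}{\log(s+2)}+\tfrac{1}{\log(n+2)}\Bigr)^{-1}.
\]

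Next I would locate $\min g$. Differentiating, $g'(w)=\frac{e^{w/a}}{a(w+b)}\bigl(\psi(w+b)-a\bigr)$, where $\psi(u):=u(1-\log u)$ increases on $(0,1)$ up to $\psi(1)=1$ and decreases on $(1,\infty)$. Since $a\le 1/2<1$, the equation $\psi(u)=a$ has exactly one root $u_-\in(0,1)$, and its other root exceeds $1+b$, because $\psi(1+b)\ge\psi(2)=2(1-\log 2)>1/2\ge a$ and $\psi$ decreases past $1$. Consequently, on $w+b\in[b,1+b]$ one has $\psi(w+b)\ge a$ throughout when $u_-\le b$, so $g$ is nondecreasing on $[0,1]$ and $\min g=g(0)=1-\log b=1+\log n$; while if $u_->b$, then $g$ decreases on $[0,u_--b]$ and increases on $[u_--b,1]$, so $\min g=g(u_--b)$. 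The only extra fact needed is the a priori bound $u_-<a$, immediate from $u_-=a/(1-\log u_-)$ and $1-\log u_->1$.

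It then remains to estimate $\min g$ in the two cases. If $u_-\le b$, then $\min g=1+\log n$, and since $\log(n+2)\lesssim 1+\log n$ we get $1/g(w)\le 1/(1+\log n)\lesssim 1/\log(n+2)$, which is dominated by the desired right-hand side. If $u_->b$, then $b<u_-<a$ forces $n>s+1$, and, using $\psi(u_-)=a$ once more,
\[
\min g=g(u_--b)=\frac{a}{u_-}\,e^{(u_--b)/a}\ge\frac{a}{u_-}=1-\log u_->1-\log a=1+\log(s+1)>\log(s+2),
\]
so $1/g(w)<1/\log(s+2)$, again within the claimed bound. Combining the two cases with the first display gives the lemma, the only auxiliary facts being the elementary estimates $\log(n+2)\lesssim 1+\log n$ and $\log(s+2)<1+\log(s+1)$, valid for all $n,s\ge 1$.

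The part requiring the most care is not any individual estimate but the setup: choosing the substitution and the comparison function $\psi$ so that the value of $g$ at an interior critical point simplifies, and then exploiting the two observations that make the bounds collapse — the a priori inequality $u_-<a$ (which, via $a/u_-=1-\log u_-$, instantly yields $\min g\ge 1+\log(s+1)$ whenever the minimizer is interior) and the automatic dichotomy that an interior minimizer can occur only when $n>s+1$, in which regime the $1/\log(n+2)$ term is negligible and $1/\log(s+2)$ alone carries the estimate.
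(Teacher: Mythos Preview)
Your proof is correct and takes a genuinely different route from the paper's. The paper splits $[0,1]$ at the explicit cutoff $\tau=1-\gamma(s)$ with $\gamma(s)=(s+2)^{-1}\log\log(s+2)$: on the left piece it discards the denominator and uses $\tau^{s+1}\le(1-\gamma(s))^{s+1}\lesssim 1/\log(s+2)$, and on the right piece it discards the numerator and bounds the denominator below via $1+n^{-1}-\tau\le n^{-1}+\gamma(s)$. Your argument instead passes to $w=1-\tau$, uses $(1-w)^{s+1}\le e^{-(s+1)w}$, and then minimizes the resulting smooth function $g(w)=e^{w/a}\bigl(1-\log(w+b)\bigr)$ exactly via the auxiliary $\psi(u)=u(1-\log u)$; the identity $\psi(u_-)=a$ collapses the interior minimum to $a/u_-\cdot e^{(u_--b)/a}$ and the observation $u_-<a$ finishes. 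Your approach is more systematic and avoids the somewhat ad hoc choice of $\gamma(s)$, at the price of a calculus computation; the paper's splitting is quicker but requires guessing the right cutoff. Both give the same qualitative bound with absolute implied constants.
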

\begin{proof}
Let us set $\g(s):=(s+2)^{-1} \log \log (s+2)$.
     First, we have
    % if $l \geq  1-s^{-1}$,
    %$$1-\log(1+n^{-1} -l) \geq 1-\log(n^{-1}+ s^{-1} ) \approx \log(n)+ \log(s).$$
    \beq\label{E4.5b}
     H_{n,s}(\tau)
     \lesssim \tau^{s+1}
%     \lesssim \frac1{\log s}
     \leq \frac1{\log (s+2)},
    \qquad \tau\in \left[0, 1- \g(s)\right.\left.\right).
    \eeq
    Next, for $\tau\in \left[1- \g(s),1\right]$ we obtain
    \ba
    1-\log(1+n^{-1} -\tau)
    \geq 1-\log\left(s^{-1}\log \log (s+2) +n^{-1}\right)
    &\geq& 1-\log\left((s+2)^{-1/2} +n^{-1/2}\right)\\
      &\gtrsim& \log (\min\{n+2,s+2\}).
    \ea
    Hence
    \beq\label{E4.5c}
    H_{n,s}(\tau)
    \lesssim \frac{ 1}{1- \log(1+n^{-1} -\tau)}
  \lesssim \frac1{\log (s+2)}+ \frac1{\log (n+2)},
  \qquad \tau\in \left[1- \g(s),1\right].
  \eeq
Thus, the upper estimate in \eqref{E4.5a} follows from \eqref{E4.5b} and \eqref{E4.5c}.
\end{proof}
The next lemma discusses a relation between concave sequences.

\begin{lemma}\label{L4.3}
If $c$ is concave, then
\beq\label{E4.4}
         \frac{1}{n}\sum_{k=0}^n c_k + \int_{n^{-1}}^{\pi} x^{-1} c_{n-{\lfloor x^{-1} + 1 \rfloor}} dx
         \lesssim \sum_{k=0}^n \frac{c_k}{n-k+1}.
 \eeq

\end{lemma}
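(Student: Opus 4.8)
The plan is to reduce to the generating sequences $V_l$ from Lemma \ref{L4.2} and then treat each $V_l$ separately. Denote the three quantities appearing in \eqref{E4.4} by $A(c):=\tfrac1n\sum_{k=0}^n c_k$, $B(c):=\int_{n^{-1}}^{\pi}x^{-1}c_{n-\lfloor x^{-1}+1\rfloor}\,dx$, and $C(c):=\sum_{k=0}^n\tfrac{c_k}{n-k+1}$. Each of these is a linear functional of the vector $(c_0,\dots,c_n)$: for $B$ this is because for every fixed $x\ne n^{-1}$ the integrand is a single coordinate $c_{n-\lfloor x^{-1}+1\rfloor}$, so $B(c)$ is a finite nonnegative linear combination of $c_0,\dots,c_{n-1}$. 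Hence, writing $c=\sum_{l=0}^n h_lV_l$ with $h_l\ge0$ by Lemma \ref{L4.2}, it suffices to prove $A(V_l)+B(V_l)\lesssim C(V_l)$ with an absolute constant for each $l$, and then to sum against the weights $h_l$.

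Fix $l$ and set $t:=1-\tfrac{l-1}{n}$, so $t\in[\tfrac1n,\,1+\tfrac1n]$. The substitution $k=n-j$ together with the identity $v_l(n-j)=\min\{\tfrac{j+1}{n},t\}$ recasts the right-hand side in the transparent form
\[
C(V_l)=\sum_{j=0}^n\min\Big\{\tfrac1n,\;\tfrac{t}{j+1}\Big\}.
\]
From this I would read off two facts. First, the $j=0$ term already gives $C(V_l)\ge\min\{\tfrac1n,t\}=\tfrac1n$. Second, since $nt\ge1$, the indices $j$ with $j+1\le nt$ --- on which the minimum equals $\tfrac1n$ --- number at least $\lfloor nt\rfloor\ge\tfrac{nt}{2}$, hence $C(V_l)\ge\tfrac t2$. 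Because $v_l(k)\le t$ for every $k$, the left-hand term is controlled at once: $A(V_l)\le\tfrac{n+1}{n}\,t\le2t\le4\,C(V_l)$.

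For $B(V_l)$ the plan is to split $[n^{-1},\pi]$, up to the single point $n^{-1}$, into $(1,\pi]$ and the intervals $\bigl(\tfrac1{m+1},\tfrac1m\bigr]$, $m=1,\dots,n-1$, on which $\lfloor x^{-1}+1\rfloor=m+1$ is constant and hence $v_l\bigl(n-(m+1)\bigr)=\min\{\tfrac{m+2}{n},t\}$, while on $(1,\pi]$ one has $\lfloor x^{-1}+1\rfloor=1$ and $v_l(n-1)=\min\{\tfrac2n,t\}$. Using $\int_1^\pi x^{-1}\,dx<2$ and $\int_{1/(m+1)}^{1/m}x^{-1}\,dx=\log\bigl(1+\tfrac1m\bigr)\le\tfrac1m$, this yields
\[
B(V_l)\le2\min\Big\{\tfrac2n,t\Big\}+\sum_{m=1}^{n-1}\tfrac1m\min\Big\{\tfrac{m+2}{n},t\Big\}.
\]
Reindexing $j=m+1$ and using $\tfrac1{j-1}\le\tfrac3{j+1}$ for $j\ge2$, the sum is at most $3\sum_{j=0}^n\tfrac1{j+1}\min\{\tfrac{j+1}{n},t\}=3\,C(V_l)$, while $2\min\{\tfrac2n,t\}\le\tfrac4n\le4\,C(V_l)$. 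Combining these bounds gives $A(V_l)+B(V_l)\le11\,C(V_l)$, which is the required estimate.

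The only step needing real care is the bookkeeping around the floor function in $B$: verifying that $\lfloor x^{-1}+1\rfloor$ equals $m+1$ exactly on $\bigl(\tfrac1{m+1},\tfrac1m\bigr]$, that together with $(1,\pi]$ these intervals exhaust $[n^{-1},\pi]$ up to the null point $x=n^{-1}$ (where the index formally becomes $-1$ and may be discarded), and that the reindexing $j=m+1$ lines up term-by-term with the series defining $C(V_l)$. Everything else is elementary, and this route avoids logarithms entirely; alternatively, one could invoke the $\asymp$-estimate of Lemma \ref{L4.2a} for $C(V_l)$ and bound $A(V_l)$ and $B(V_l)$ directly by $t\,(|\log t|+1)$.
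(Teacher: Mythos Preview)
Your proof is correct and shares the paper's opening move: reduce to the basis sequences $V_l$ via Lemma~\ref{L4.2} and handle each $l$ separately. Where you diverge is in the treatment of each $V_l$. The paper invokes the logarithmic asymptotic of Lemma~\ref{L4.2a}, namely $C(V_l)\asymp t\bigl(|\log t|+1\bigr)$ with $t=1-(l-1)/n$, and then bounds $B(V_l)$ by the pointwise estimate \eqref{E4.4h} and a split of the integration range at $x=(nt)^{-1}$, arriving at $B(V_l)\lesssim t(|\log t|+1)$. You instead rewrite $C(V_l)=\sum_{j=0}^n\min\{n^{-1},t/(j+1)\}$ and exploit only the crude lower bounds $C(V_l)\ge n^{-1}$ and $C(V_l)\ge t/2$, then decompose $B(V_l)$ along the level sets of $\lfloor x^{-1}+1\rfloor$ and match the resulting sum term-by-term with $C(V_l)$. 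This buys you a logarithm-free argument that does not touch Lemma~\ref{L4.2a}, at the cost of slightly more bookkeeping with the floor function; the paper's route is shorter once \eqref{E4.4a} is available but depends on that extra lemma. Both yield absolute constants.
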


\begin{proof}
By  Lemma \ref{L4.2}, it suffices to establish \eqref{E4.4} for all $c_k=v_l(k),\,0\le k\le n$, defined
by \eqref{E4.3},
        with a fixed $l,\,0\le l\le n$.
        In addition, taking into account \eqref{E4.4a}, we see that inequality \eqref{E4.4}
        is a consequence of the following
        inequality:
        \beq\label{E4.4e}
        \frac1{n}{\sum_{k=0}^n v_l(k)}
 +\int_{n^{-1}}^{\pi} x^{-1} v_l({n-{\lfloor x^{-1} + 1 \rfloor})} dx
 \lesssim  \left(1- \frac{l-1}{n}\right) \left(\left|\log\left(1-\frac{l-1}{n}\right)\right|+1\right).
    \eeq

   To prove \eqref{E4.4e}, we first note that the estimate
    \beq\label{E4.4f}
\frac1{n}{\sum_{k=0}^n v_l(k)} \leq 2\left(1- \frac{l-1}{n}\right),\qquad 0\le l\le n,
\eeq
immediately follows from \eqref{E4.3}.
       Next, we show that for $0\le l\le n$,

        \beq\label{E4.4g}
       I_n(l):=\int_{n^{-1}}^{\pi} x^{-1} v_l({n-{\lfloor x^{-1} + 1 \rfloor})} dx
 \lesssim  \left(1- \frac{l-1}{n}\right) \left(\left|\log\left(1-\frac{l-1}{n}\right)\right|+1\right).
    \eeq
    To prove it, we need the following simple estimates for $x\in(0,\pi]$:

         \beq\label{E4.4h}
         v_l({n-{\lfloor x^{-1} + 1 \rfloor}}) \leq
         \min\left\{\frac{2}{n} + \frac{1}{n x},1- \frac{l-1}{n}\right\}
          \leq \min\left\{\frac{2\pi+1}{n x},1- \frac{l-1}{n}\right\}.
         \eeq
         If $l=0$, then using \eqref{E4.4h}, we see that $I_n(0)\lesssim 1$, that is, \eqref{E4.4g} holds.
         Furthermore, using again \eqref{E4.4h} for $1\le l\le n$, we have
\ba
   I_n(l)
     &=& \int_{n^{-1}}^{n^{-1} \left(1-\frac{l-1}{n}\right)^{-1}} x^{-1} v_l({n-{\lfloor x^{-1} + 1 \rfloor}}) dx
     \\
     &+& \int_{{n^{-1} \left(1-\frac{l-1}{n}\right)^{-1}}}^{\pi} x^{-1} v_l({n-{\lfloor x^{-1} + 1 \rfloor}}) dx
    \\
    &\lesssim&
       \left(1- \frac{l-1}{n}\right) \int_{n^{-1}}^{n^{-1} \left(1-\frac{l-1}{n}\right)^{-1}} x^{-1}  dx
       + \frac1n\int_{{n^{-1} \left(1-\frac{l-1}{n}\right)^{-1}}}^{\pi} x^{-2}  dx
       \\
       &\lesssim&  \left(1- \frac{l-1}{n}\right) \left(\left|\log(1-\frac{l-1}{n})\right|+1\right).
\ea

Therefore, \eqref{E4.4g} holds for $0\le l\le n$, and \eqref{E4.4e} follows from \eqref{E4.4f} and \eqref{E4.4g},
completing the proof.
\end{proof}

Our next result % we obtain certain estimates for trigonometric polynomials. In particular, the following result
 describes the integral norm of a trigonometric polynomial with concave coefficients. %  Putting everything together, we conclude the following:

\begin{proposition}\label{P4.4}
Let $c$ be concave and $T_n(x)=c_0+2\sum_{k=1}^n c_k \cos{kx}$. Then we have
\beq\label{E4.6}
    \norm{T_n}_1^* \asymp \sum_{k=0}^n \frac{c_k}{n-k+1}.
    \eeq
\end{proposition}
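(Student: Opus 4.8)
The plan is to establish the two halves of \eqref{E4.6} separately.

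\textbf{Upper bound.} By Lemma \ref{L4.2} write $c=\sum_{l=0}^n h_l V_l$ with $h_l\ge 0$, so $T_n=\sum_{l=0}^n h_l U_l$, where $U_l$ is the cosine polynomial with coefficient sequence $V_l$. Comparing Fourier coefficients shows $nU_l=\sum_{j=l}^n D_j$, which by the identity $\sum_{j=l}^n\sin\big((j+\tfrac12)x\big)=\big(\cos lx-\cos((n+1)x)\big)/(2\sin(x/2))$ telescopes to
\[
U_l(x)=\frac{\cos lx-\cos\big((n+1)x\big)}{2n\sin^2(x/2)}.
\]
From $|\cos lx-\cos((n+1)x)|\le\min\{2,\ (n+1-l)|x|,\ \tfrac12((n+1)^2-l^2)x^2\}$ and $\sin(x/2)\asymp|x|$ on $[-\pi,\pi]$ one gets $|U_l(x)|\lesssim\frac1n\min\{(n-l+1)n,\ \frac{n-l+1}{|x|},\ \frac1{x^2}\}$, and integrating (the three terms dominate on $|x|\le\frac1n$, on $\frac1n\le|x|\le\frac1{n-l+1}$, and on $|x|\ge\frac1{n-l+1}$, respectively) gives $\norm{U_l}_1^*\lesssim\big(1-\tfrac{l-1}n\big)\big(|\log(1-\tfrac{l-1}n)|+1\big)$, which by \eqref{E4.4a} is $\asymp\sum_{k=0}^n\frac{v_l(k)}{n-k+1}$. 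Since $h_l\ge 0$ and $c_k=\sum_l h_l v_l(k)$,
\[
\norm{T_n}_1^*\le\sum_{l=0}^n h_l\norm{U_l}_1^*\lesssim\sum_{l=0}^n h_l\sum_{k=0}^n\frac{v_l(k)}{n-k+1}=\sum_{k=0}^n\frac{c_k}{n-k+1}.
\]

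\textbf{Reduction of the lower bound.} Lemma \ref{L4.3}, together with the elementary computation obtained by splitting $[n^{-1},\pi]$ into the intervals $[\tfrac1{j+1},\tfrac1j]$ and using the monotonicity of $c$ (which shows $\int_{n^{-1}}^\pi x^{-1}c_{n-\lfloor x^{-1}+1\rfloor}\,dx\asymp c_{n-1}+\sum_{j\ge1}\tfrac{c_{n-1-j}}{j}\asymp\sum_{k=0}^n\frac{c_k}{n-k+1}$), yields
\[
\sum_{k=0}^n\frac{c_k}{n-k+1}\ \asymp\ \frac1n\sum_{k=0}^n c_k\ +\ \int_{n^{-1}}^\pi x^{-1}c_{n-\lfloor x^{-1}+1\rfloor}\,dx .
\]
Hence it suffices to bound $\norm{T_n}_1^*$ below by each of the two summands. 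For the first, testing against the Dirichlet kernel gives, for every $N\le n$, $\norm{T_n}_1^*\ge\frac{1}{2N+1}\big|\int_{-\pi}^\pi T_n D_N\big|=\frac{2\pi}{2N+1}\big(c_0+2\sum_{k=1}^N c_k\big)\gtrsim\frac1N\sum_{k=0}^N c_k$, and $N=n$ does it.

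\textbf{The main obstacle.} The remaining inequality is the multi-scale estimate
\[
\norm{T_n}_1^*\ \gtrsim\ \int_{n^{-1}}^\pi x^{-1}c_{n-\lfloor x^{-1}+1\rfloor}\,dx\ \asymp\ \sum_{2^j\le n}c_{\,n-2^j},
\]
i.e.\ it must capture the contribution of the top coefficients at every dyadic distance from $n$. For a fixed $j$, applying a smooth frequency projection onto $\{k:n-k\asymp 2^j\}$ (an $L_1$-bounded Fourier multiplier, uniformly in $j$), using that $c_k\asymp c_{n-2^j}$ on this band by concavity, and testing the resulting polynomial against a single exponential of frequency $\asymp n$, already gives $\norm{T_n}_1^*\gtrsim c_{n-2^j}$ for each $j$. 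But this only produces the maximum over $j$, not the sum, since Littlewood--Paley theory is unavailable in $L_1$; obtaining the full sum $\sum_{2^j\le n}c_{n-2^j}$ is the genuine difficulty, and it must exploit the concavity of $c$ (equivalently, that $\{\Delta c_k\}$ is nonnegative and nondecreasing) decisively — presumably by working directly with $\norm{T_n}_1^*\asymp\int_{-\pi}^\pi|x|^{-1}|S(x)|\,dx$, $S(x)=\sum_{k=0}^n\Delta c_k\sin\big((k+\tfrac12)x\big)$, so that the dyadic scales reinforce instead of cancelling. I expect this summation step to be the crux of the whole argument.
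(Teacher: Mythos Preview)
Your upper bound is correct and proceeds by a different route from the paper. The paper first obtains the pointwise estimate $|T_n(x)|\lesssim x^{-1}c_{n-\lfloor x^{-1}+1\rfloor}$ on $(1/n,\pi)$ by two Abel summations (Lemma \ref{L4.6}) and then invokes Lemma \ref{L4.3} to convert the resulting integral into $\sum_k c_k/(n-k+1)$; you instead decompose via Lemma \ref{L4.2} first and integrate the explicit closed form $U_l(x)=\frac{\cos lx-\cos((n+1)x)}{2n\sin^2(x/2)}$ for each building block. Both arguments land on the same estimate; yours is arguably more transparent here, while the paper's has the mild advantage that the pointwise bound \eqref{E4.8} is recorded for independent use.

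The lower bound, however, is left open in your proposal: you isolate the step $\norm{T_n}_1^*\gtrsim\sum_{2^j\le n}c_{n-2^j}$ and call it the ``crux of the whole argument.'' In fact there is no obstacle here. The paper's Lemma \ref{L4.5} gives the full lower bound in one stroke, for \emph{arbitrary} (not just concave) coefficients, by testing $T_n$ against the single polynomial
\[
Q_{2n+1}(x)=\frac{1}{2(n+1)}+2\sin\bigl((n+1)x\bigr)\sum_{k=1}^n\frac{\sin kx}{k}
=\frac{1}{2(n+1)}+\sum_{k=1}^n\frac{\cos kx}{n-k+1}-\sum_{k=1}^n\frac{\cos(k+n+1)x}{k},
\]
which is uniformly bounded on $[-\pi,\pi)$ because the partial sums $\sum_{k=1}^n\frac{\sin kx}{k}$ are. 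Its Fourier coefficients on the band $|k|\le n$ are, up to a constant, exactly $1/(n-|k|+1)$, so
\[
\norm{T_n}_1^*\gtrsim\Bigl|\int_{-\pi}^{\pi}T_nQ_{2n+1}\Bigr|\gtrsim\Bigl|\sum_{k=0}^n\frac{c_k}{n-k+1}\Bigr|.
\]
All dyadic scales are captured simultaneously by a single bounded dual function; there is no summation problem and no need for Littlewood--Paley theory. Your test against $D_N/(2N+1)$ only recovered the Ces\`aro average because $D_N$ has flat coefficients; the correct dual object already carries the weights $1/(n-k+1)$. Note also that concavity plays no role in the lower half of \eqref{E4.6}: it is used only for the upper bound.
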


    The proof is based on two lemmas. The first one is a Hardy-type estimate (cf., e.g,  \cite[Theorem 8.7]{Z2002}).
 % In the next two lemmas we discuss certain properties of trigonometric polynomials.
\begin{lemma}\label{L4.5}
    For any $T_n(x)=c_0+2\sum_{k=1}^n c_k \cos{kx}$, we have
    \beq\label{E4.7}
    \norm{T_n}_1^* \gtrsim \left\vert\sum_{k=0}^n \frac{c_k}{n-k+1}\right\vert.
    \eeq
    \end{lemma}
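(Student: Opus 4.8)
The plan is to express the linear functional $\sum_{k=0}^n \frac{c_k}{n-k+1}$ as an integral against $T_n$ of an $L_\infty$ kernel, so that $\left|\sum_{k=0}^n \frac{c_k}{n-k+1}\right| \le \|T_n\|_1^* \cdot \|K\|_\infty^*$ for a suitable kernel $K$. Concretely, I would look for a bounded function $K$ on $[-\pi,\pi]$ whose Fourier coefficients are (a constant multiple of) $\frac{1}{n-|k|+1}$ for $|k|\le n$, since then $\frac{1}{2\pi}\int_{-\pi}^\pi T_n(x)\overline{K(x)}\,dx = c_0\hat K(0) + 2\sum_{k=1}^n c_k \hat K(k) \asymp \sum_{k=0}^n \frac{c_k}{n-k+1}$, and Hölder's inequality on $(L_1^*, L_\infty^*)$ finishes the job. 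The natural candidate is (a translate/modulation of) the function $\log$ of something, because $\sum_{k\ge 1}\frac{\cos kx}{k} = -\log\bigl(2\sin\frac{x}{2}\bigr)$ is the prototype of an integrable symbol with coefficients $\sim 1/k$; here I want coefficients $\sim \frac{1}{n-k+1}$ instead of $\frac{1}{k}$, which amounts to reindexing $k \mapsto n+1-k$, i.e. multiplying by $e^{i(n+1)x}$ and reflecting.

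Here is how I would carry it out. First, recall the classical identity
\[
\sum_{m=1}^{M}\frac{\cos m x}{m} = -\log\Bigl(2\Bigl|\sin\frac{x}{2}\Bigr|\Bigr) + R_M(x),
\]
where the remainder $R_M$ is uniformly bounded (this is essentially the statement that the partial sums of $\sum \frac{\cos mx}{m}$ are uniformly bounded away from $0$ — Dirichlet-test / summation-by-parts, e.g. \cite[Ch. I]{Z2002}). Equivalently, set $\Psi_M(x):=\sum_{m=1}^M \frac{e^{imx}}{m}$; then $\|\Psi_M\|_\infty^* \lesssim 1$ uniformly in $M$. Second, define the kernel
\[
K_n(x):= e^{-i(n+1)x}\Psi_{n+1}(x) + e^{i(n+1)x}\overline{\Psi_{n+1}(x)} + \frac{2}{n+1},
\]
a real-valued trigonometric polynomial with $\|K_n\|_\infty^* \lesssim 1$ and with $\hat K_n(k) = \frac{1}{n+1-k}$ for $0\le k\le n$ and $\hat K_n(-k)=\hat K_n(k)$, $\hat K_n(k)=0$ for $|k|>n$ — so that $\hat K_n(k) = \frac{1}{n-k+1}$ exactly on $0\le k\le n$, up to harmless adjustment of a few indices near $k=0$. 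Third, compute
\[
\frac{1}{2\pi}\int_{-\pi}^{\pi} T_n(x) K_n(x)\,dx
= c_0\,\hat K_n(0) + 2\sum_{k=1}^n c_k\,\hat K_n(k)
\asymp \sum_{k=0}^n \frac{c_k}{n-k+1},
\]
using $T_n(x)=c_0+2\sum_{k=1}^n c_k\cos kx$ and Parseval. Fourth, by Hölder,
\[
\Bigl|\sum_{k=0}^n \frac{c_k}{n-k+1}\Bigr| \lesssim \Bigl|\frac{1}{2\pi}\int_{-\pi}^{\pi} T_n(x)K_n(x)\,dx\Bigr| \le \frac{1}{2\pi}\,\|T_n\|_1^*\,\|K_n\|_\infty^* \lesssim \|T_n\|_1^*,
\]
which is \eqref{E4.7}.

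The main obstacle is the uniform-in-$M$ bound $\|\Psi_M\|_\infty^* \lesssim 1$ and the bookkeeping needed to match $\hat K_n(k)$ with $\frac{1}{n-k+1}$ exactly on the range $0\le k\le n$ rather than, say, on $1\le k\le n+1$ — the shift by $e^{\pm i(n+1)x}$ sends the singularity of $\log|2\sin(x/2)|$ to $x=0$ and turns the tail $\sum_{m>M}$ into an error supported on frequencies $|k|>n$, but one must check that the finitely many coefficients near the endpoint $k=0$ are handled correctly (a constant-correction term $\frac{2}{n+1}$ and possibly dropping the $k=0,1$ terms absorbs this). Everything else is routine: the identity for $\sum \frac{\cos mx}{m}$, Parseval, and Hölder. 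As a final remark, this is exactly a Hardy-type inequality in disguise — the dual formulation says that the map $T_n\mapsto \sum_k \frac{\hat T_n(k)}{|k|+1}$ is bounded on $L_1^*$ when restricted to polynomials, which is why the cited \cite[Theorem 8.7]{Z2002} is the right comparison point.
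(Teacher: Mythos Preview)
Your overall strategy---build a kernel $K$ with $\widehat K(k)\asymp \frac{1}{n-k+1}$ for $|k|\le n$ and $\|K\|_\infty^*\lesssim 1$, then apply duality---is exactly what the paper does. But your specific construction has a genuine gap: the claim $\|\Psi_M\|_\infty^*\lesssim 1$ is false. At $x=0$ one has $\Psi_M(0)=\sum_{m=1}^M m^{-1}\asymp\log M$, so the partial sums of $\sum e^{imx}/m$ are \emph{not} uniformly bounded (only the imaginary part $\sum_{m=1}^M \frac{\sin mx}{m}$ is). Consequently your kernel
\[
K_n(x)=\tfrac{4}{n+1}+2\sum_{k=1}^n\frac{\cos kx}{n-k+1}
\]
satisfies $K_n(0)\asymp\log n$, and the H\"older step collapses.

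The fix is exactly what the paper does: one must allow the kernel to carry frequencies \emph{above} $n$, which are invisible to $T_n$ in the integral but are needed to kill the logarithmic growth. The paper takes
\[
Q_{2n+1}(x)=\frac{1}{2(n+1)}+2\sin(n+1)x\sum_{k=1}^n\frac{\sin kx}{k}
=\frac{1}{2(n+1)}+\sum_{k=1}^n\frac{\cos kx}{n-k+1}-\sum_{k=1}^n\frac{\cos(k+n+1)x}{k},
\]
which is uniformly bounded because $\sum_{k=1}^n\frac{\sin kx}{k}$ is (\cite[Ch.~I]{Z2002}). The extra block $-\sum_{k=1}^n\frac{\cos(k+n+1)x}{k}$ lives at frequencies $n+2,\ldots,2n+1$, contributes nothing to $\int T_n Q_{2n+1}$, but at $x=0$ supplies the $-\log n$ that cancels the $+\log n$ coming from the low-frequency part. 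Your modulation by $e^{\pm i(n+1)x}$ does not produce any such high-frequency block, so the cancellation never happens.
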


    \begin{proof}
        %The lower bound is a consequence of an inequality due to Hardy.
        %The upper bound simply follows from
   % \end{proof}
%\begin{lemma}
%    Let $d \searrow$, then
%    $$|\sum_{k=0}^N d_k e^{ikx}|\lesssim \sum_{k=0}^{1/x+1} d_k.$$
%\end{lemma}
%\begin{proof}
     %$$|\sum_{k=0}^N d_k e^{ikx}|\lesssim \sum_{k=0}^{1/x} d_k + |\sum_{k=1/x}^N d_k e^{ikx}|  \lesssim %\sum_{k=0}^{1/x+1} d_k,$$
    % true for every $d \searrow$.

     %Note that the lower bound follows as well from \cite[Th. 1]{nikol1} as $c_n$ are bounded. The upper bound in turn follows from \cite[Th. 1]{efimov}, as in our case $|\Delta^2 c_k|=-\Delta^2 c_k$ and
     %\begin{align*}
         %\sum_{k=0}^{N-1}\frac{(k+1)(N-k)}{N+1}|\Delta^2c_k|=\frac{2}{N+1}\sum_{k=0}^N c_k\leq  \sum_{k=0}^N %\frac{c_k}{N-k+1}.
     %\end{align*}
     For $c_0=1$ and positive coefficients a weaker version of inequality \eqref{E4.7} was proved by Nikolskii \cite[Theorem 1]{N1948}. The proof of \eqref{E4.7} is similar to that one.
     Since the polynomial
     \ba
     Q_{2n+1}(x)&:=&\frac{1}{2(n+1)}+2\sin\,(n+1)x\sum_{k=1}^n\frac{\sin kx}{k}\\
     &=&\frac{1}{2(n+1)}+\sum_{k=1}^n\frac{\cos kx}{n-k+1}-\sum_{k=1}^{n}\frac{\cos\, (k+n+1)x}{k}
     \ea
     is uniformly bounded on $[-\pi,\pi)$ (cf. \cite[Ch. VIII, Eqns. (1.4) and (1.6)]{Z2002}), we obtain
     \ba
     \norm{T_n}_1^* \gtrsim \left\vert\int_{-\pi}^\pi T_n(x)Q_{2n+1}(x)dx\right\vert
     \gtrsim \left\vert\sum_{k=0}^n \frac{c_k}{n-k+1}\right\vert.
     \ea
\end{proof}

 \begin{lemma}\label{L4.6}
 If $c$ is concave and $T_n(x)=c_0+2\sum_{k=1}^n c_k \cos{kx}$, then

 \bna
    &&|T_n(x)|\lesssim x^{-1} c_{n-{\lfloor x^{-1} + 1 \rfloor}}\qquad x\in (1/n,\pi),\label{E4.8}\\
     &&\norm{T_n}_1^* \lesssim \frac{1}{n}\sum_{k=0}^n c_k + \int_{n^{-1}}^{\pi} x^{-1} c_{n-{\lfloor x^{-1} + 1 \rfloor}} dx.\label{E4.9}
 \ena
    \end{lemma}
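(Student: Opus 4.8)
The plan is to prove the pointwise bound \eqref{E4.8} first and then integrate it to get \eqref{E4.9}. For \eqref{E4.8}, fix $x\in(1/n,\pi)$ and use Abel summation (summation by parts) on $T_n(x)=c_0+2\sum_{k=1}^n c_k\cos kx$. Writing $S_k(x):=\sum_{j=0}^k\cos jx$ for the partial sums of the cosine series (whose absolute values are $\lesssim 1/x$ uniformly on $(0,\pi)$, by the standard Dirichlet-kernel estimate $|S_k(x)|\le\pi/x$ or similar), one Abel summation gives $T_n(x)=\sum_{k=0}^{n}(\Delta c_k)\,S_k(x)$ up to harmless boundary terms, where $\Delta c_k=c_k-c_{k+1}\ge 0$. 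This alone yields $|T_n(x)|\lesssim x^{-1}\sum_{k=0}^n\Delta c_k=x^{-1}c_0$, which is not yet sharp enough near $x\approx\pi$. To get the localized factor $c_{n-\lfloor x^{-1}+1\rfloor}$, I would exploit concavity a second time: since $\{\Delta c_k\}$ is itself non-decreasing in $k$ (that is what concavity gives us), the tail sum $\sum_{k=m}^n\Delta c_k=c_m$ with $m=n-\lfloor x^{-1}+1\rfloor$ controls the "low-frequency" part, while the "high-frequency" part $k\le m$ contributes at most $\sum_{k\le m}\Delta c_k\cdot|S_k(x)|\lesssim x^{-1} c_{m+1}\cdot(\text{number of terms})$ — here one must be careful. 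The cleaner route is a second Abel summation: $T_n(x)=\sum_{k=0}^n(-\Delta^2 c_{k-1})\,\widetilde S_k(x)$ where $\widetilde S_k(x)=\sum_{j=0}^k S_j(x)$ is a double partial sum satisfying $|\widetilde S_k(x)|\lesssim (k+1)/x$ and also $|\widetilde S_k(x)|\lesssim 1/x^2$, and $-\Delta^2 c_{k-1}\ge 0$. Splitting the sum at $k\asymp 1/x$ and using the first bound for small $k$ and the second for large $k$, both pieces are $\lesssim x^{-1}\sum_{k\ge m}(-\Delta^2 c_{k-1})(\text{stuff})$, and summing the second differences telescopes back to a single coefficient $c_m$ with $m\asymp n-1/x$; tracking the floor gives exactly $c_{n-\lfloor x^{-1}+1\rfloor}$.

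Alternatively — and this is probably the slickest implementation — I would use Lemma \ref{L4.2} to reduce \eqref{E4.8} to the single-generator case $c_k=v_l(k)$. For $c=V_l$ the polynomial $T_n$ has an explicit closed form (it is essentially a Fej\'er-type kernel shifted in frequency, or a difference of two Fej\'er kernels), and the bound $|T_n(x)|\lesssim x^{-1} v_l(n-\lfloor x^{-1}+1\rfloor)$ can be verified directly from the classical estimate for the Fej\'er kernel $|K_N(x)|\lesssim \min\{N, (Nx^2)^{-1}\}$ together with the observation that $v_l(n-\lfloor x^{-1}+1\rfloor)\asymp\min\{1/(nx),\,1-(l-1)/n\}$. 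Since all the $h_l$ in $c=\sum_l h_l V_l$ are non-negative, the bound for general concave $c$ follows by summing, and the right-hand sides add up correctly because $v_l(n-\lfloor x^{-1}+1\rfloor)$ is, for fixed $x$, concave in the coefficient index in the same way $c$ is — more precisely, $\sum_l h_l v_l(m)=c_m$ by the very representation of Lemma \ref{L4.2}. This is the cleanest way to get \eqref{E4.8} without fighting the Abel-summation bookkeeping.

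Once \eqref{E4.8} is in hand, \eqref{E4.9} is immediate: split $\norm{T_n}_1^*=\int_{-\pi}^\pi|T_n(x)|\,dx=2\int_0^\pi|T_n(x)|\,dx$ into the two ranges $(0,1/n]$ and $(1/n,\pi)$. On $(0,1/n]$ we have the trivial bound $|T_n(x)|\le\sum_{k=-n}^n|c_k|\le c_0+2\sum_{k=1}^n c_k\lesssim\sum_{k=0}^n c_k$, so $\int_0^{1/n}|T_n|\lesssim n^{-1}\sum_{k=0}^n c_k$; on $(1/n,\pi)$ we plug in \eqref{E4.8} and get exactly $\int_{n^{-1}}^\pi x^{-1}c_{n-\lfloor x^{-1}+1\rfloor}\,dx$. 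Adding the two contributions gives \eqref{E4.9}.

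The main obstacle is the sharp pointwise estimate \eqref{E4.8}: the naive Abel summation only gives $|T_n(x)|\lesssim c_0/x$, and extracting the genuinely local coefficient $c_{n-\lfloor x^{-1}+1\rfloor}$ — which is what makes Proposition \ref{P4.4} tight — requires either a careful second summation by parts with the telescoping of second differences, or the reduction via Lemma \ref{L4.2} to explicit Fej\'er-type kernels. I expect the Lemma \ref{L4.2} route to be the one that goes through with the least pain, since it converts an inequality about arbitrary concave sequences into a finite family of concrete kernel estimates.
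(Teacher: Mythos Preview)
Your derivation of \eqref{E4.9} from \eqref{E4.8} is correct and matches the paper. For \eqref{E4.8}, however, both routes you sketch have gaps as stated.

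In the double-Abel route, the splitting point is wrong: the bounds $|\widetilde S_k|\lesssim (k+1)/x$ and $|\widetilde S_k|\lesssim 1/x^2$ do cross at $k\asymp 1/x$, but that crossover is irrelevant. Telescoping $\sum_{k}(-\Delta^2 c_{k-1})$ over any range yields a difference of \emph{first} differences $\Delta c_\cdot$, never a coefficient $c_m$; and if you apply the triangle inequality after the second Abel step the boundary term $\Delta c_n\,\widetilde S_n\asymp c_n/x^2$ alone can exceed the target $x^{-1}c_{n-\lfloor x^{-1}+1\rfloor}$ (take $c_k\equiv 1$). The paper avoids this by a device you did not mention: after one Abel step it passes to $\sum_k \Delta c_k\,e^{ikx}$ and \emph{reverses the index}, setting $b_k:=\Delta c_{n-k}$, which is \emph{non-increasing} by concavity. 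Then the standard monotone-coefficient argument applies: split at $M=\lfloor x^{-1}+1\rfloor$, bound the head trivially by $\sum_{k\le M}b_k$, Abel the tail to get $x^{-1}b_M\le\sum_{k\le M}b_k$, and finally observe $\sum_{k\le M}b_k=\sum_{j\ge n-M}\Delta c_j=c_{n-M}$. The index reversal is the missing idea; with it, your ``split at $1/x$'' becomes correct---in the new index.

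Your Lemma~\ref{L4.2} route is genuinely different from the paper's and can be made to work, but not ``directly from the classical estimate for the Fej\'er kernel''. One computes $T_{n,l}=\frac{n+1}{n}F_n-\frac{l}{n}F_{l-1}$, a \emph{difference} of non-negative Fej\'er kernels; for $l$ near $n$ and $1/n<x<1/(n-l+1)$ each term is $\asymp 1/(nx^2)$ while the target $x^{-1}v_l(n-M)\asymp (n-l+1)/(nx)$ is much smaller, so the triangle inequality fails. The fix is the product formula
\[
T_{n,l}(x)=\frac{\sin\!\big(\tfrac{(n+l+1)x}{2}\big)\,\sin\!\big(\tfrac{(n-l+1)x}{2}\big)}{n\sin^2(x/2)},
\]
which gives $|T_{n,l}(x)|\lesssim \min\{1,(n-l+1)x\}/(nx^2)$ and matches $x^{-1}v_l(n-M)$ in both regimes. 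With this amendment your alternative proof goes through and is arguably more transparent than the paper's; but as written your plan would stall at precisely the cancellation you need.
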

    \begin{proof}
We prove the lemma by the standard argument (see, e.g., \cite[Ch. V, Sect. 1]{Z2002}).
Since $T_n$ is a polynomial with non-increasing coefficients, the Abel transformation shows that
for $x\in (1/n,\pi)$,
%        $$|T_n(x)|\lesssim x^{-1} \left|\sum_{k=0}^n \Delta c_k \sin{(k+1/2)x}\right| \leq x^{-1} \left|\sum_{k=0}^n \Delta c_k e^{ikx}\right| = x^{-1} \left|\sum_{k=0}^n \Delta c_{n-k} e^{-ikx}\right|.$$

\bna\label{E4.9.1}
|T_n(x)|
&\lesssim& x^{-1} \left|\sum_{k=0}^n \Delta c_k \sin{(k+1/2)x}\right|
\\
&\leq& x^{-1} \left|\sum_{k=0}^n \Delta c_k e^{ikx}\right|
+x^{-1}\left|\sum_{k=0}^n \Delta c_k e^{-ikx}\right|=: I_1(x)+I_2(x),
\nonumber
\ena
where $c_{n+1}:=0$.
Hence, setting $b_k:=\Delta c_{n-k},\,1\leq k\leq n,$ we obtain
\ba
         I_1(x)=x^{-1}\left|\sum_{k=0}^n b_k e^{-ikx}\right|
         \lesssim x^{-1} \sum_{k=0}^{\lfloor x^{-1} + 1 \rfloor} b_{k}
         +x^{-1} \left|\sum_{k=\lfloor x^{-1} + 1 \rfloor}^n b_{k}e^{-ikx}\right|.
\ea

Now, applying again the Abel transformation to the
%second sum on the right-hand side of \eqref{E4.9a}
sum $\sum_{k=\lfloor x^{-1} + 1 \rfloor}^n$ above
and using the fact that $b_k$ is non-increasing with $k$, we arrive at
\beq\label{E4.9.2}
         I_1(x)\lesssim
%          x^{-1} \sum_{k=0}^{\lfloor x^{-1} + 1 \rfloor} b_{k}
 %        +x^{-1} \left|\sum_{k=\lfloor x^{-1} + 1 \rfloor}^n b_{k}e^{-ikx}\right|
  %        \lesssim
          x^{-1} \sum_{k=0}^{\lfloor x^{-1} + 1 \rfloor} b_{k}
         +
         x^{-2} b_{\lfloor x^{-1} + 1 \rfloor} %\left|\sum_{k=\lfloor x^{-1} + 1 \rfloor}^n b_{k}e^{-ikx}\right|
         \lesssim    x^{-1} \sum_{k=0}^{\lfloor x^{-1} + 1 \rfloor} b_{k} =
           x^{-1} c_{n-{\lfloor x^{-1} + 1 \rfloor}}.
           \eeq
%         $$|T_n(x)|\lesssim x^{-1} \sum_{k=0}^{\min(n,x^{-1}+1)} \Delta c_{n-k} \leq x^{-1} c_{n-{\lfloor x^{-1} + 1 \rfloor}}.$$
The similar estimate holds for $I_2(x)$. Thus, \eqref{E4.8} follows from \eqref{E4.9.1}, \eqref{E4.9.2}, and the corresponding estimate for $I_2(x)$.
To prove \eqref{E4.9}, we split the integral
$\int_0^\pi|T_n(x)|dx\leq (1/n)\|T_n\|_\iy^* +\int_{1/n}^\pi|T_n(x)|dx$
and use \eqref{E4.8}.
\end{proof}

%    \begin{corollary}Let $c$ be concave. Then,
 %       $$\norm{T_n}_1 \lesssim \frac{1}{n}\sum_{k=1}^n c_k + \int_{n^{-1}}^{\pi} x^{-1} c_{n-{\lfloor x^{-1} + 1 \rfloor}} dx.  $$
  %  \end{corollary}

\begin{proof}[Proof of Proposition \ref{P4.4}.]
The $\lesssim$ part of relation \eqref{E4.6} is a consequence of \eqref{E4.4} and \eqref{E4.9},
while the $\gtrsim$ part immediately follows from \eqref{E4.7}.
\end{proof}

\begin{remark}\label{R4.6a}
A weaker version of the $\lesssim$ part of Proposition \ref{P4.4} is a special case of the following Efimov's result
\cite[Theorem 1]{E1960} with a lengthy proof:
for any $T_n(x)=\sum_{k=0}^n c_k \cos{kx}$,
\ba
\norm{T_n}_1^*
\lesssim \left|c_0\right|+\frac{1}{n+1}\sum_{k=0}^{n-1}{(k+1)(n-k)}\left\vert \Delta^2c_k\right\vert
+\sum_{k=0}^{n}\frac{|c_k|}{n-k+1}.
\ea
Note that for concave coefficients, $\left\vert \Delta^2c_k\right\vert=-\Delta^2c_k$ and
\ba
\frac{1}{n+1}\sum_{k=0}^{n-1}{(k+1)(n-k)}\left\vert \Delta^2c_k\right\vert
=\frac{2}{n+1}\sum_{k=0}^nc_k\leq \sum_{k=0}^{n}\frac{c_k}{n-k+1}.
\ea
\end{remark}

\vspace{.2cm}
\section{Proofs of Theorems }\label{S5}
 Theorem \ref{T1.1} immediately follows from Lemma \ref{L2.2} and Corollaries \ref{C3.2} and \ref{C3.3}.
We now prove Theorem \ref{T1.2}.
 \begin{proof}[Proof of Theorem \ref{T1.2}]
 The $\gtrsim$ part  follows from Lemma \ref{L2.1}.
 To prove the $\lesssim$ part  of the theorem, we need the following result:

 \begin{lemma}\label{L5.1}
 (Ganzburg and Tikhonov \cite[Theorem 1.4]{GT2017})
 For $0<p<q\leq \infty$ and $s=0,\,1,\ldots$,
 the following relation holds:
 \ba
 \sup_{\substack{f\in E_p,\\ \norm{f}_p=1}}\norm{ f^{(s)}}_q
\leq
\liminf_{n\to \iy}n^{-s-1/p+1/q}
\sup_{\substack{T_n\in\mathcal{T}_n,\\ \norm{T_n}_p^*=1}}\norm{ T_n^{(s)}}_q^*.
\ea
 \end{lemma}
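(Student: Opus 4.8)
The plan is to exhibit every $f\in E_p$ (normalized by $\norm{f}_p=1$) as a limit of suitably rescaled trigonometric polynomials of degree at most $n$, so that the best constant $M_n:=\sup\{\norm{T_n^{(s)}}_q^*:T_n\in\mathcal{T}_n,\ \norm{T_n}_p^*=1\}$ in the discrete Bernstein--Nikolskii inequality controls $\norm{f^{(s)}}_q$ in the limit. The only facts about $\mathcal{T}_n$ that enter are the homogeneity of the functional and the defining bound $\norm{T_n^{(s)}}_q^*\le M_n\norm{T_n}_p^*$ for $T_n\in\mathcal{T}_n$.

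First I would regularize $f$ so that the periodization below makes sense even when $f\notin L_1$. Fix $\epsilon\in(0,1)$, put $K_\epsilon(x):=\sinc^2(\epsilon x/2)$ (so $\widehat{K_\epsilon}$ is a triangle on $[-\epsilon,\epsilon]$ and $0\le K_\epsilon\le1$), and set $F_\epsilon:=fK_\epsilon$. Then $\widehat{F_\epsilon}$ is supported in $[-(1+\epsilon),1+\epsilon]$, hence $F_\epsilon\in E_{1+\epsilon,p}$; moreover $F_\epsilon$ and $F_\epsilon^{(s)}$ decay at least quadratically and so lie in $L_1(\R)$ — here one uses $f^{(j)}\in E_p\subset E_{1,\iy}$ for $0\le j\le s$ (Bernstein's inequality, \eqref{E1.2} with $p=q$, together with the embedding $E_p\subset E_q$). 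Expanding $F_\epsilon^{(s)}$ by the Leibniz rule and applying dominated convergence (the derivatives $K_\epsilon^{(m)}$ of positive order carry a factor $\epsilon^m$, and $f^{(j)}\in L_q$ by $E_p\subset E_q$) yields $F_\epsilon\to f$ in $L_p$ and $F_\epsilon^{(s)}\to f^{(s)}$ in $L_q$ as $\epsilon\to0$; for $q=\iy$ one splits $\R$ into a bounded part, where $K_\epsilon\to1$ uniformly, and a tail, where $f^{(s)}$ is small.

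Next comes the periodization. With $\mu=\mu_n:=n/(1+\epsilon)$, set $G_n^\epsilon(y):=\sum_{j\in\Z}F_\epsilon(y+2\pi\mu j)$ and $H_n^\epsilon(x):=G_n^\epsilon(\mu x)$. Since $F_\epsilon\in L_1$ with quadratic decay, Poisson summation applies and gives $H_n^\epsilon(x)=\sum_k\tfrac1{2\pi\mu}\widehat{F_\epsilon}(k/\mu)e^{ikx}$, whose coefficients vanish for $|k|>\mu(1+\epsilon)=n$; thus $H_n^\epsilon\in\mathcal{T}_n$, and the same computation applied to $F_\epsilon^{(s)}$ shows $(G_n^\epsilon)^{(s)}$ is the $2\pi\mu$-periodization of $F_\epsilon^{(s)}$. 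The substitution $y=\mu x$ gives $\norm{H_n^\epsilon}_p^*=\mu^{-1/p}\norm{G_n^\epsilon}_{L_p(-\pi\mu,\pi\mu)}$ and $\norm{(H_n^\epsilon)^{(s)}}_q^*=\mu^{s-1/q}\norm{(G_n^\epsilon)^{(s)}}_{L_q(-\pi\mu,\pi\mu)}$ (read with $1/q=0$ when $q=\iy$), and feeding these into $\norm{(H_n^\epsilon)^{(s)}}_q^*\le M_n\norm{H_n^\epsilon}_p^*$ yields
\[
\norm{(G_n^\epsilon)^{(s)}}_{L_q(-\pi\mu,\pi\mu)}\le (1+\epsilon)^{s+1/p-1/q}\,n^{-s-1/p+1/q}M_n\,\norm{G_n^\epsilon}_{L_p(-\pi\mu,\pi\mu)}.
\]
Letting $n\to\iy$ with $\epsilon$ fixed, the tails $\sum_{j\ne0}F_\epsilon(\cdot+2\pi\mu_n j)$ tend to $0$ on each period (trivially from $f\in L_p$ and the quasi-triangle inequality when $p\le1$; via Minkowski and the quadratic decay of $F_\epsilon$ when $p>1$), so $G_n^\epsilon\to F_\epsilon$ and $(G_n^\epsilon)^{(s)}\to F_\epsilon^{(s)}$ pointwise a.e., $\norm{G_n^\epsilon}_{L_p(-\pi\mu_n,\pi\mu_n)}\to\norm{F_\epsilon}_p$, and Fatou's lemma gives $\norm{F_\epsilon^{(s)}}_q\le\liminf_n\norm{(G_n^\epsilon)^{(s)}}_{L_q(-\pi\mu_n,\pi\mu_n)}$. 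Hence $\norm{F_\epsilon^{(s)}}_q\le(1+\epsilon)^{s+1/p-1/q}\bigl(\liminf_n n^{-s-1/p+1/q}M_n\bigr)\norm{F_\epsilon}_p$, and letting $\epsilon\to0$ and invoking the first step completes the proof.

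The argument is conceptually routine, and the genuinely delicate points are analytic. The main obstacle is making the Poisson summation step fully rigorous, i.e.\ guaranteeing that $G_n^\epsilon$ is literally a trigonometric polynomial of degree at most $n$ rather than a formal series; this is exactly why the $L_1$-regularization of the first step cannot be omitted when $p>1$, where a function in $E_p$ need not be integrable. A secondary nuisance is the bookkeeping in the two passages to the limit — keeping quasi-norm estimates throughout when $p<1$, and treating the endpoint $q=\iy$ separately.
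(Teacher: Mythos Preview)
The paper does not supply its own proof of this lemma; it is simply quoted as \cite[Theorem~1.4]{GT2017} and then used as a black box to pass from the trigonometric estimate (Theorem~\ref{T1.1}) to the entire-function estimate (Theorem~\ref{T1.2}). So there is nothing in the present paper to compare your argument against.

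That said, your sketch is correct and follows the standard route (and, in substance, the one taken in \cite{GT2017}): regularize $f$ by a Fej\'{e}r-type factor to force $L_1$-decay, periodize via Poisson summation to produce a genuine $T_n\in\mathcal{T}_n$, apply the polynomial Bernstein--Nikolskii bound, rescale, and pass to the limit first in $n$ (Fatou/tail estimates) and then in the regularization parameter. The two points you flag---rigorous Poisson summation when $p>1$ and the quasi-norm bookkeeping when $p<1$---are exactly the places where care is needed, and your handling of both is sound: the product with $\sinc^2(\epsilon\cdot/2)$ gives $1/x^2$ decay for $F_\epsilon$ and all its derivatives (each derivative of $\sinc^2$ is a finite sum of products of two $\sinc$-derivatives, each $\asymp 1/x$), so termwise differentiation of the periodization is justified; and for $p\le1$ the subadditivity $|\sum|^p\le\sum|\cdot|^p$ disposes of the tails directly. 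The endpoint $q=\infty$ needs no Fatou, only pointwise convergence plus a sup, as you note.
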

This lemma allows us to derive the $\lesssim$ part of Theorem \ref{T1.2} from
the $\lesssim$ part of Theorem \ref{T1.1}.
The proof of the theorem is complete.
 \end{proof}
 Note that
 the $\lesssim$ part of the theorem for $p\in(0,1]$ is proved in Corollary \ref{C3.5a}
 by a different method.

\begin{proof}[Proof of Theorem \ref{T1.4}]
    For a polynomial $T_n(\cdot)=c_0+2\sum_{k=1}^n c_k \cos{k\cdot}\in\TT_{n,\mbox{c}}$,
    we show that
        \beq\label{E5.1}
        \sum_{k=0}^n k^s c_k
         \lesssim
         \left(n^s+\frac{n^{s+1}}{s+1}\right) \left(\frac1{\log{(s+2)}}+ \frac1{\log (n+2)}\right)
          \sum_{k=0}^n \frac{c_k}{n-k+1}.
         \eeq
        Then the $\lesssim$ part of \eqref{E1.5} follows from relation \eqref{E5.1} and Proposition \ref{P4.4}. It remains to verify \eqref{E5.1}.

        By  Lemma \ref{L4.2}, it suffices to establish \eqref{E5.1} for all $c_k=v_l(k),\,0\le k\le n$,
        with a fixed $l,\,0\le l\le n$.
    Recall that by relation \eqref{E4.4a} of Lemma \ref{L4.2a},
         \beq\label{E5.2}
        \left\vert V_l\right\vert_1:=\sum_{k=0}^n \frac{v_l(k)}{n-k+1} \asymp \left(1- \frac{l-1}{n}\right) \left(\left|\log\left(1-\frac{l-1}{n}\right)\right|+1\right).
        \eeq
        Then it follows from \eqref{E5.1} and \eqref{E5.2} that it suffices to prove the following relation with a fixed $l,\,0\le l\le n$:
        \beq\label{E5.3}
        \sum_{k=0}^n k^s v_l(k)
        \lesssim
        \left(n^s+\frac{n^{s+1}}{s+1}\right) \left(\frac1{\log{(s+2)}}+ \frac1{\log (n+2)}\right)
        \left\vert V_l\right\vert_1.
        \eeq
To prove \eqref{E5.3}, we use the definition of $v_l(k)$ (see \eqref{E4.3}) and split the left-hand side of \eqref{E5.3} into two sums as follows:
        \beq\label{E5.4}
        \sum_{k=0}^n k^s v_l(k) = \left(1-\frac{l-1}{n}\right)\sum_{k=0}^{l-1} k^s
        + \sum_{k=l}^n k^s \left(1-\frac{k-1}{n}\right)=:I_1(l)+I_2(l).
        \eeq
Next, $I_1(0)=0$ and using \eqref{E5.2}
and  Lemma \ref{L4.2b} for  $n, s \in\N$
and $0\leq l\leq n$, we obtain
\bna\label{E5.5}
I_1(l)
&\leq& \left(1-\frac{l-1}{n}\right)\frac{l^{s+1}}{s+1}
\lesssim
\frac{\left\vert V_l\right\vert_1}{s+1}
\left(\frac{l^{s+1}}{1-\log\left(1-\frac{l-1}{n}\right)}\right)\\
&=& \frac{n^{s+1}\left\vert V_l\right\vert_1}{s+1}H_{n,s}(l/n)
\lesssim
\frac{n^{s+1}}{s+1} \left(\frac1{\log (s+2)}+ \frac1{\log (n+2)}\right)
\left\vert V_l\right\vert_1,
\nonumber
\ena
where $H_{n,s}$ is defined by \eqref{E4.5a}.
To estimate $I_2(l)$, note that
\beq\label{E5.6}
I_2(l)=n^{s-1}+\frac{2}{n}\sum_{k=l}^{n-1} k^s
+\sum_{k=l}^{n-1} k^s \left(1-\frac{k+1}{n}\right)
 \leq n^{s-1}+\frac{2n^s}{s+1}+\sum_{k=l}^{n-1} k^s \left(1-\frac{k+1}{n}\right).
\eeq
Furthermore, for $0\leq l\leq n-1$,
\bna\label{E5.7}
           &&\sum_{k=l}^{n-1} k^s \left(1-\frac{k+1}{n}\right)
           <n^{s+1}\int\limits_{l/n}^1t^s(1-t)dt
           \\
            &&=n^{s+1}\left(\frac{1}{s+1}-\frac{\left({l}/{n}\right)^{s+1}}{s+1}
            -\frac{1}{s+2}+\frac{\left({l}/{n}\right)^{s+2}}{s+2}\right)
            <n^{s+1}\frac{1-\left( ({l-1})/{n}\right)^{s+1}}{s(s+1)}.
            \nonumber
        \ena
        In addition, we need the following elementary inequality:
        \beq\label{E5.8}
            \frac{1-x^{s+1}}{s+1}\leq \frac{(1-x)\left(\log\frac{1}{1-x}+1\right)}{\log s},\qquad x\in [0,1) .
        \eeq
        To prove it, we first note that
        $(1-x^{s+1})/(s+1)\leq 1-x$ and, in addition, inequalities
        $x>1-1/s$ and $\log (1/(1-x))> \log s$ are equivalent.
         Hence \eqref{E5.8} is valid for  $x\in (1-1/s,1)$.
         Next, since the function $y(\log(1/y)+1)$ is increasing on $[0,1)$, we have
        \ba
            \frac{(1-x)\left(\log\frac{1}{1-x}+1\right)}{\log s}
            > \frac{1}{s}
            >\frac{1-x^{s+1}}{s+1},\qquad x\in [0,1-1/s].
        \ea
        Hence, inequality \eqref{E5.8} is established.
        Then combining \eqref{E5.7}
        and \eqref{E5.2} with \eqref{E5.8} for $x=(l-1)/n$, we obtain
        \beq\label{E5.9}
        \sum_{k=l}^{n-1} k^s \left(1-\frac{k+1}{n}\right)
        \le \frac{n^{s+1}}{s\log s} \left(1-\frac{l-1}{n}\right)\left(\left\vert\log
      \left(1-\frac{l-1}{n}\right)\right\vert+1\right)
      \lesssim \frac{n^{s+1}}{s\log s}
      \left\vert V_l\right\vert_1.
        \eeq
        The following inequalities are valid as well by relations \eqref{E5.2} and \eqref{E4.4b}:
        \beq\label{E5.10}
        {n^{s-1}}
        \lesssim
        \frac{n^{s}}{\log (n+2)}
        \left\vert V_l\right\vert_1,\qquad
        \frac{n^s}{s}
        \lesssim
        \frac{n^{s+1}}{s\log (n+2)}
        \left\vert V_l\right\vert_1.
      \eeq
      Therefore, it follows from \eqref{E5.6}, \eqref{E5.9}, and \eqref{E5.10} that
      \beq\label{E5.11}
      I_2(l)\lesssim
      \left(n^s+\frac{n^{s+1}}{s+1}\right) \left(\frac1{\log{(s+2)}}+ \frac1{\log (n+2)}\right)
        \left\vert V_l\right\vert_1.
      \eeq
      Thus, collecting together \eqref{E5.4}, \eqref{E5.5}, and \eqref{E5.11}, we arrive at \eqref{E5.3},
      eventually proving  \eqref{E5.1} and, therefore, the $\lesssim$ part of Theorem \ref{T1.4}.

      To establish the $\gtrsim$ part of the theorem, we choose
      $T_{n,l}(\cdot):=  v_l(0)+2\sum_{k=1}^nv_l(k)\cos k\cdot\in\TT_{n,\mbox{c}}$
      with a certain $l,\,0\leq l\leq n$. Recall that the concave sequence
      $V_l=(v_l(0),\ldots,v_l(n))$ is defined by \eqref{E4.3}.

      We first assume that $s\in\N$ is an even number.
       In the case of $1\leq n\leq s$, let us set $l=n$. Then by relations
        \eqref{E4.3} and \eqref{E5.2} and, in addition, by Proposition \ref{P4.4}, we obtain
   \bna\label{E5.12}
   \norm{T_{n,n}^{(s)}}_{\iy}^*
   &=&\sum_{k=1}^n k^s  v_n(k)
   > n^sv_n(n)=n^{s-1}
\gtrsim
\frac{n^{s}}{\log (n+2)}\left\vert V_n\right\vert_1 \\
 &\gtrsim&
 \left(n^s+n^{s+1}s^{-1}\right) \left( \frac{1}{\log (s+2)}+ \frac{1}{\log (n+2)}\right)
 \norm{T_{n,n}}_{1}^*.
 \nonumber
\ena

In the case of $n> s\geq 1$, let us set $l=1+\lfloor n(1-1/s)\rfloor$.
Then
\beq\label{E5.12a}
1\leq l< n,\qquad l^{s+1}\gtrsim n^{s+1},\qquad 1-(l-1)/n\geq 1/s.
\eeq
Therefore, using relations  \eqref{E4.3}, \eqref{E5.2}, and \eqref{E5.12a} and, in addition, using
Proposition \ref{P4.4}, we obtain
\bna\label{E5.13}
\norm{T_{n,l}^{(s)}}_{\iy}^*
   &=&\sum_{k=1}^n k^s  v_l(k)
   \geq \sum_{k=1}^l k^s  v_l(k)
  \\ &\gtrsim&
   \frac{\left(1-\frac{l-1}{n}\right) \frac{{l}^{s+1}}{s}}
 {
 \left(1- \frac{{l}-1}{n}\right) \left(\left(\left|\log(1-\frac{{l}-1}{n}\right)\right|+1\right)
 }
 \left\vert V_l\right\vert_1
 \gtrsim
 \frac{n^{s+1}}{s\log (s+1)}\left\vert V_l\right\vert_1
  \nonumber
 \\
 &\gtrsim&
 \left(n^{s+1}s^{-1}+n^s\right) \left( \frac{1}{\log (s+2)}+ \frac{1}{\log (n+2)}\right)
 \norm{T_{n,l}}_{1}^*.
 \nonumber
 \ena
 Thus, for an even $s$, the $\gtrsim$ part of Theorem \ref{T1.4} follows from \eqref{E5.12} and \eqref{E5.13}.

Finally, for an odd $s$, using Bernstein's inequality $\norm{T_n^{(s+1)}}_\infty^* \leq n \norm{T_n^{(s)}}_\infty^*$, we conclude that  the following relations hold:
\ba
\sup_{\substack{T_n\in\mathcal{T}_{n,\mbox{c}},\\ \norm{T_n}_1^*=1}}\norm{ T_n^{(s)}}_\iy^*
&\geq& n^{-1} \sup_{\substack{T_n\in\mathcal{T}_{n,\mbox{c}},\\ \norm{T_n}_1^*=1}}\norm{ T_n^{(s+1)}}_\iy^*\\
&\gtrsim&  \left(n^{s}+\frac{n^{s+1}}{s+1} \right)\left(\frac1{\log{(s+2)}}+ \frac1{\log (n+2)}\right).
\ea
The proof is now complete.
\end{proof}

\vspace{.2cm}
\section{Remarks on Extremal Polynomials}\label{S6}

Below we discuss several facts about the extremal trigonometric polynomials $P_n\in\TT_n$ for
  Bernstein--Nikolskii inequality \eqref{E1.1}, with   $p=1,\, q=\infty$, and  $s=0,\,1,\ldots$.
In the special case of $s=0$, the similar results are known either for trigonometric polynomials or  entire functions
of exponential type (cf. \cite{HB1993, T1965, AD2022}).

The polynomials $P_n$ are defined by the relations
\beq\label{E6.1}
P_n^{(s)}(0)=1,\qquad
\inf_{\substack{T_n\in\mathcal{T}_{n},\\ T_n^{(s)}(0)=1}}\norm{ T_n}_1^*
=\left\|P_n\right\|_1^*,
\eeq
and their existence is well known.
In addition, it is clear that by considering
$\left(\overline{P_n(x)}+ P_n(x)\right)/2$ and
$\left(P_n(x)+ (-1)^s P_n(-x)\right)/2$,
we may assume that first, $P_n$ is real-valued, and second,
it is even for even $s$ and odd for odd $s$.

% The starting point is the following known formula
    \begin{lemma}\label{L6.1}
        Let $P_n$ satisfy \eqref{E6.1}. Then for any $Q_n\in \mathcal{T}_{n}$, we have
        \beq\label{E6.2}
        \int_{-\pi}^{\pi}\sign\left(P_n(x)\right)\, Q_n(x)dx = \norm{ P_n}_1^*\,Q_n^{(s)}(0).
        \eeq
    \end{lemma}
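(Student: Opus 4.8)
The plan is to use the classical $L_1$-variational (duality) argument. \textbf{Step 1: reduction to the homogeneous case.} Given an arbitrary $Q_n\in\mathcal{T}_n$, write $Q_n=Q_n^{(s)}(0)\,P_n+R_n$ with $R_n:=Q_n-Q_n^{(s)}(0)\,P_n\in\mathcal{T}_n$. By \eqref{E6.1} we have $P_n^{(s)}(0)=1$, hence $R_n^{(s)}(0)=0$. Since $P_n\not\equiv0$ (otherwise $P_n^{(s)}(0)=0\ne1$), the trigonometric polynomial $P_n$ has only finitely many zeros in $[-\pi,\pi)$, so $\sign(P_n(x))$ is defined for a.e.\ $x$ and $\sign(P_n(x))P_n(x)=|P_n(x)|$ a.e.; in particular $\int_{-\pi}^{\pi}\sign(P_n(x))P_n(x)\,dx=\norm{P_n}_1^*$. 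Therefore \eqref{E6.2} for $Q_n$ is equivalent to the identity
\[
\int_{-\pi}^{\pi}\sign\bigl(P_n(x)\bigr)\,R_n(x)\,dx=0\qquad\text{for every }R_n\in\mathcal{T}_n\text{ with }R_n^{(s)}(0)=0.
\]

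\textbf{Step 2: a variational identity.} Fix such an $R_n$. For every $t\in\R$ the polynomial $P_n+tR_n$ lies in $\mathcal{T}_n$ and satisfies $(P_n+tR_n)^{(s)}(0)=1$, so by the extremal property \eqref{E6.1} the convex function
\[
g(t):=\norm{P_n+tR_n}_1^*=\int_{-\pi}^{\pi}\bigl|P_n(x)+tR_n(x)\bigr|\,dx
\]
attains its minimum at $t=0$. I would then show that $g$ is differentiable at $0$ with $g'(0)=\int_{-\pi}^{\pi}\sign(P_n(x))R_n(x)\,dx$: at every point $x$ with $P_n(x)\ne0$ the integrand is differentiable in $t$ near $0$, and by the mean value theorem the difference quotient $\bigl(|P_n(x)+tR_n(x)|-|P_n(x)|\bigr)/t$ is bounded in absolute value by $|R_n(x)|$ and tends to $\sign(P_n(x))R_n(x)$ as $t\to0$; since $\{P_n=0\}$ is a null set and $|R_n|\in L_1[-\pi,\pi]$, the dominated convergence theorem gives the claimed formula for $g'(0)$. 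As $g$ has a minimum at $0$, we conclude $g'(0)=0$, which is exactly the displayed identity of Step~1. Combining Steps 1 and 2 yields \eqref{E6.2}.

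The argument is standard, and I expect no serious obstacle; the single point that needs care is the differentiation of $g$ under the integral sign. This is handled by the two observations isolated above --- that a nonzero trigonometric polynomial vanishes only on a set of measure zero (so $\sign(P_n)$ causes no ambiguity), and that the difference quotients are dominated by the integrable function $|R_n|$ (so dominated convergence applies); alternatively, one may invoke convexity of $g$ together with the null-set property to see directly that the one-sided derivatives of $g$ at $0$ coincide with $\int_{-\pi}^{\pi}\sign(P_n)R_n\,dx$.
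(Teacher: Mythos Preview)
Your proof is correct and follows essentially the same approach as the paper: both reduce to the variational function $\varphi(\lambda)=\norm{P_n+\lambda(Q_n-Q_n^{(s)}(0)P_n)}_1^*$, observe it is minimized at $\lambda=0$, and differentiate to obtain \eqref{E6.2}. You supply more detail than the paper does on justifying the differentiation under the integral sign (finitely many zeros, dominated convergence), which is a welcome addition but not a different idea.
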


    \begin{proof}
 For any $Q_n\in\TT_n$, the function
 \ba
\vphi(\la):=\norm{R_{n,\la}}_1^*:=
\norm{P_n + \lambda\left(Q_n -Q_n^{(s)}(0)P_n\right)}_1^*,\qquad \la\in\R,
\ea
has a minimum at $\lambda=0$ since $R_{n,\la}\in\TT_n$ and
 $R_{n,\la}^{(s)}(0)=1$. Hence, calculating $\vphi^\prime(\la)$, we arrive at \eqref{E6.2}.
    \end{proof}
    For $s=0$ and entire functions of exponential type,
    the similar fact was proved in \cite[Eqn. (2.10)]{HB1993}.
    As a simple corollary of Lemma \ref{L6.1}, we obtain the following result (cf. \cite[p. 211]{T1965} for $s=0$):
    \begin{corollary}\label{C6.2}
    Let $P_n$ satisfy \eqref{E6.1}.
        Then the best constant $\left(\norm{P_n}^{*}_1\right)^{-1}$
        in Bernstein--Nikolskii inequality \eqref{E1.1}, with   $p=1,\, q=\infty$, and $s=0,\,1,\ldots$,
        is the distance in the uniform norm from the function $h_{n,s}:=(1/(2\pi))D_n^{(s)}$ to the space $F_n$ of all functions with Fourier coefficients $c_k=0$ for $\left\vert k\right\vert\leq n$.
    \end{corollary}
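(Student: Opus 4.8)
The statement to prove is Corollary \ref{C6.2}: that $(\norm{P_n}_1^*)^{-1}$ equals $\dist(h_{n,s}, F_n)$ in the uniform norm, where $h_{n,s} = (1/(2\pi))D_n^{(s)}$ and $F_n$ is the space of functions whose Fourier coefficients vanish for $|k| \le n$. The natural route is classical duality: the best constant $(\norm{P_n}_1^*)^{-1}$ is, by definition \eqref{E6.1}, the reciprocal of $\inf\{\norm{T_n}_1^* : T_n^{(s)}(0)=1,\ T_n\in\TT_n\}$. Rewriting, $(\norm{P_n}_1^*)^{-1} = \sup\{|T_n^{(s)}(0)| : \norm{T_n}_1^* \le 1,\ T_n\in\TT_n\}$. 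The first step is thus to express the linear functional $T_n \mapsto T_n^{(s)}(0)$ on $\TT_n$ as integration against a kernel: writing $T_n(x)=\sum_{|k|\le n}c_k e^{ikx}$, we have $T_n^{(s)}(0)=\sum_{|k|\le n}(ik)^s c_k$, and since the Fourier coefficients of $(1/(2\pi))D_n^{(s)}$ are exactly $(ik)^s$ for $|k|\le n$ (and $0$ otherwise), we get $T_n^{(s)}(0) = \int_{-\pi}^{\pi} T_n(x)\, \overline{h_{n,s}(x)}\,dx$ — up to taking care of complex conjugation, which is harmless because, as noted after \eqref{E6.1}, $P_n$ may be taken real-valued and $D_n^{(s)}$ is real up to a unimodular constant depending on the parity of $s$.

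The second step is to observe that for any $g \in F_n$ one has $\int_{-\pi}^\pi T_n(x)\,\overline{g(x)}\,dx = 0$, since $T_n$ has frequencies only in $[-n,n]$ while $g$ has none there; hence $T_n^{(s)}(0) = \int_{-\pi}^\pi T_n(x)\,\overline{(h_{n,s}(x)-g(x))}\,dx$ for every such $g$. Taking absolute values and applying Hölder's inequality with the pair $(1,\infty)$ gives $|T_n^{(s)}(0)| \le \norm{T_n}_1^* \cdot \norm{h_{n,s}-g}_\infty^*$ for all $g\in F_n$, so $(\norm{P_n}_1^*)^{-1} \le \dist(h_{n,s},F_n)$. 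This is the easy half.

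The third step — the one requiring care — is the reverse inequality, i.e. showing the distance is actually attained by the extremal polynomial and that equality holds in Hölder's inequality. Here I would invoke Lemma \ref{L6.1}: for any $Q_n\in\TT_n$, $\int_{-\pi}^\pi \sign(P_n(x))\,Q_n(x)\,dx = \norm{P_n}_1^*\,Q_n^{(s)}(0)$. Set $\Psi(x) := \frac{1}{\norm{P_n}_1^*}\,\sign(P_n(x))$ (taken real; again parity lets us align signs with $h_{n,s}$). Then $\norm{\Psi}_\infty^* = (\norm{P_n}_1^*)^{-1}$, and Lemma \ref{L6.1} says $\int_{-\pi}^\pi \Psi(x)\,Q_n(x)\,dx = Q_n^{(s)}(0) = \int_{-\pi}^\pi \overline{h_{n,s}(x)}\,Q_n(x)\,dx$ for all $Q_n\in\TT_n$. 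Therefore $h_{n,s}-\Psi$ (after matching the conjugation/sign conventions) is orthogonal to all $Q_n\in\TT_n$, which precisely means $h_{n,s}-\Psi \in F_n$. Consequently $\dist(h_{n,s},F_n) \le \norm{h_{n,s}-(h_{n,s}-\Psi)}_\infty^* = \norm{\Psi}_\infty^* = (\norm{P_n}_1^*)^{-1}$. Combining with the easy half yields equality, and $h_{n,s}-\Psi$ is the nearest point.

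**Main obstacle.** The genuine content is the reverse inequality of Step 3, and concretely the fact that $h_{n,s}-\Psi\in F_n$ — this is where Lemma \ref{L6.1} does all the work, so the proof is short once that lemma is in hand. The one thing demanding attention is bookkeeping with complex conjugation and the parity of $s$: $(ik)^s$ is real for even $s$ and purely imaginary for odd $s$, so $h_{n,s}$ is a real function for even $s$ and a purely imaginary one for odd $s$; after the normalizations recorded below \eqref{E6.1} ($P_n$ real-valued, even for even $s$ and odd for odd $s$), one checks that $\sign(P_n)$ pairs correctly against $h_{n,s}$ in all cases, possibly after multiplying by the constant $i^{s}$ or $(-i)^s$, which does not change the uniform norm. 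I would handle this by treating even and odd $s$ uniformly via the observation that multiplying $h_{n,s}$ by a fixed unimodular constant changes neither $\dist(h_{n,s},F_n)$ nor $\norm{h_{n,s}}_\infty^*$ nor the subspace $F_n$, so one may reduce at the outset to the case where the relevant functional is represented by a real kernel. Everything else is the standard $L_1$–$L_\infty$ duality argument.
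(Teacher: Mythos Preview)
Your proposal is correct and follows essentially the same route as the paper's proof: both directions are the standard $L_1$--$L_\infty$ duality, with Lemma~\ref{L6.1} supplying the reverse inequality by exhibiting an explicit element of $F_n$ at the right distance from $h_{n,s}$.

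One correction to your bookkeeping: $h_{n,s}=(1/(2\pi))D_n^{(s)}$ is \emph{always} real, being the derivative of the real function $D_n$, so your claim that it is purely imaginary for odd $s$ is mistaken. The genuine parity issue is that $D_n^{(s)}$ is an \emph{odd} function for odd $s$, and the correct kernel identity is $T_n^{(s)}(0)=\int_{-\pi}^\pi T_n(x)\,h_{n,s}(-x)\,dx$, not $\int T_n\,\overline{h_{n,s}}$. Consequently, what Lemma~\ref{L6.1} gives you directly is that $h_{n,s}(\cdot)-\psi_n(-\,\cdot)\in F_n$ (equivalently, $h_{n,s}+\Psi\in F_n$ for odd $s$ and $h_{n,s}-\Psi\in F_n$ for even $s$); the paper simply writes $\psi_n(-x)$ throughout, which absorbs the reflection and avoids any case split or unimodular-constant fix.
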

    \begin{proof}
       Let us set
       $\psi_n:= \sign(P_n)  \left(\norm{P_n}^{*}_1\right)^{-1}$.
       It follows from Lemma \ref{L6.1} that the function
       $h_{n,s}(x)-\psi_n(-x)$ belongs to $F_n$. Hence,
       \ba
        \dist_\infty(h_{n,s},F_n)
       \leq \norm{\psi_n}_\infty^* = \left(\norm{P_n}^{*}_1\right)^{-1}.
        \ea
        Conversely,
       for any $f\in F_n$,
       \ba
       \norm{h_{n,s} - f}_ \infty^* \geq \left(\norm{P_n}_1^*\right)^{-1} \int_{-\pi}^{\pi} \left(h_{n,s}(-x)- f(-x)\right)\,P_n(x)dx=P_n^{(s)}(0) \left(\norm{P_n}_1^*\right)^{-1},
       \ea
        and the result follows.
    \end{proof}
    Finally, we discuss certain properties of zeros of extremal polynomials.

    \begin{corollary}\label{C6.3}
Let $P_n$ be a polynomial, satisfying \eqref{E6.1} for $s=0$. Then
the following properties are valid:
\begin{itemize}
\item[(a)] $P_n$ has $2n$ simple zeros
$-\pi<\alpha_1<\alpha_2<\cdots< \alpha_{2n}<\pi$;\\
 %$\{-\alpha_k,\alpha_k\}_{k=1}^n$
   \item[(b)] for any polynomial $Q_n\in\TT_n$, the following equality holds:
   \beq\label{E6.3}
 2 \sign(P_n(\pi)) \sum_{k=1}^{2n}(-1)^{k+1} Q_n(\alpha_k)  = \norm{P_n}_1^*  Q_n^{'}(0).
   \eeq
   \end{itemize}
    \end{corollary}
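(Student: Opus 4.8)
The plan is to exploit the structural information that Lemma~\ref{L6.1} (with $s=0$) provides about $P_n$, namely the orthogonality relation
$\int_{-\pi}^\pi \sign(P_n(x))\,Q_n(x)\,dx = \norm{P_n}_1^*\,Q_n'(0)$
valid for all $Q_n\in\TT_n$. Recall that we may assume $P_n$ is real-valued and, since $s=0$, we cannot assume any parity; but we can still read off a sign-change count. First I would establish part~(a). The key point is that $\sign(P_n(x))$ must change sign often on $(-\pi,\pi)$: if $P_n$ had strictly fewer than $2n$ sign changes on the circle, one could interpolate a trigonometric polynomial $R_m\in\TT_m$ with $m\le n$ having exactly the sign pattern of $P_n$ (a standard fact: a sign pattern with $2m$ changes on $\T$ is realized by some element of $\TT_m$), and then $\int_{-\pi}^\pi \sign(P_n)\,R_m\,dx > 0$ while $R_m'(0)$ can be prescribed to vanish (adjusting by a lower-degree polynomial, or noting $R_m$ has a free constant), contradicting Lemma~\ref{L6.1}. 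Hence $P_n$ has at least $2n$ sign changes; since $P_n\in\TT_n$ has at most $2n$ zeros on $\T$ counted with multiplicity, it has exactly $2n$ zeros, all simple, none at $\pm\pi$ after a rotation-free normalization — label them $-\pi<\alpha_1<\dots<\alpha_{2n}<\pi$. Consequently $\sign(P_n(x))$ is, up to the global sign $\sign(P_n(\pi))$ on the arc containing $\pi$, equal to $(-1)^{k}$ alternating on the arcs between consecutive $\alpha_k$'s.

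For part~(b), I would simply substitute this explicit description of $\sign(P_n)$ back into the identity of Lemma~\ref{L6.1}. Writing $\varepsilon:=\sign(P_n(\pi))$, on the interval $(\alpha_k,\alpha_{k+1})$ the sign of $P_n$ equals $\varepsilon(-1)^{?}$; tracking the parity carefully (the arc $(\alpha_{2n},\pi]\cup[-\pi,\alpha_1)$ carries sign $\varepsilon$, and each crossing flips it), the integral $\int_{-\pi}^\pi \sign(P_n)\,Q_n$ becomes a signed sum of the form $-\varepsilon\sum_k (-1)^{k}\big(\Phi(\alpha_{k+1})-\Phi(\alpha_k)\big)$ where $\Phi$ is an antiderivative of $Q_n$; telescoping this alternating sum over a full period collapses the boundary terms at $\pm\pi$ (they cancel by periodicity) and leaves $2\varepsilon\sum_{k=1}^{2n}(-1)^{k+1}Q_n(\alpha_k)$ after reindexing. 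Setting this equal to $\norm{P_n}_1^*\,Q_n'(0)$ from Lemma~\ref{L6.1} gives exactly \eqref{E6.3}. The one subtlety is getting the sign/parity bookkeeping right — whether the leading factor is $(-1)^{k+1}$ or $(-1)^k$ and whether one needs $\sign(P_n(\pi))$ or $\sign(P_n(\alpha_1^+))$ — which is settled by checking the placement of the "reference" arc through $\pi$.

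The main obstacle is part~(a), specifically the lower bound of $2n$ on the number of sign changes; part~(b) is then a direct computation. The delicate piece is the interpolation/realizability argument: one must produce a nonzero $R\in\TT_n$ whose sign agrees with $\sign(P_n)$ everywhere (so that $\int \sign(P_n)R > 0$) but with $R'(0)=0$, to contradict Lemma~\ref{L6.1}. If $P_n$ has $2m<2n$ sign changes, take $R_0\in\TT_m$ realizing that sign pattern (classical: prescribe $R_0$ to vanish exactly at the $2m$ sign-change points and check the sign), then replace $R_0$ by $R_0 - c\,D_m$ or add a suitable multiple of $\cos((m{+}1)\cdot)$ — i.e. bump up to degree $n$ — to kill the derivative at $0$ without destroying positivity of the pairing, using that the added perturbation can be taken small in $L^1$ while $\int\sign(P_n)R_0$ is a fixed positive quantity. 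Care is also needed to rule out zeros at $\pm\pi$ and multiple zeros: a multiple zero would reduce the sign-change count below the zero count, again contradicting the minimality via the same interpolation idea, so simplicity of all $2n$ zeros follows from the sharp count. Once (a) is secured, (b) is immediate as described.
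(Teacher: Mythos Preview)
There is a genuine gap at the very first step: for $s=0$ Lemma~\ref{L6.1} reads
\[
\int_{-\pi}^{\pi}\sign(P_n)\,Q_n \;=\; \norm{P_n}_1^*\,Q_n(0),
\]
not $\norm{P_n}_1^*\,Q_n'(0)$. The derivative in \eqref{E6.3} appears only because one applies Lemma~\ref{L6.1} to the \emph{test polynomial} $Q_n'\in\TT_n$, obtaining $\int\sign(P_n)\,Q_n' = \norm{P_n}_1^*\,Q_n'(0)$; integrating $Q_n'$ over each sign-constancy interval then yields the values $Q_n(\alpha_k)$ directly, with no auxiliary antiderivative $\Phi$ needed. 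Your computation for (b) instead integrates $Q_n$ and produces $\Phi(\alpha_k)$, which is not the quantity in \eqref{E6.3}.

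This misreading also undermines your plan for (a). With the correct $s=0$ identity, a polynomial $R$ sharing the sign of $P_n$ satisfies $\int\sign(P_n)\,R = \norm{R}_1^* > 0$, which forces $R(0)\neq 0$, not $R'(0)\neq 0$; so ``killing $R'(0)$ by a small perturbation'' gives no contradiction, and killing $R(0)$ is incompatible with $R$ keeping the sign of $P_n$ near $0$ (recall $P_n(0)=1$ by \eqref{E6.1}). The paper's route is different and cleaner: it first derives, for the $m$ actual sign-change points $\beta_1<\cdots<\beta_m$, the identity $\norm{P_n}_1^*\,Q_n'(0)=2\sign(P_n(\pi))\sum_{k=1}^m(-1)^{k+1}Q_n(\beta_k)$ (this is exactly part~(b) before one knows $m=2n$), and then exhibits a $Q_{n,0}\in\TT_n$ vanishing at every $\beta_k$ yet with $Q_{n,0}'(0)\neq 0$, which is impossible unless $m=2n$. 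Here the evenness of $P_n$ --- which you said cannot be assumed, but which the paper explicitly does assume for even $s$, hence for $s=0$ --- is what makes the concrete choice $Q_{n,0}(x)=\sin x\prod_{k=1}^{m/2}(\cos x-\cos\beta_k)$ work when $m/2<n$; and $P_n(0)=1$ guarantees $0\notin\{\beta_k\}$, so $Q_{n,0}'(0)=\prod_{k}(1-\cos\beta_k)\neq 0$.
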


    \begin{proof}
    First we recall that $P_n$ is an even polynomial, so $P_n$ does not change its sign at $0$ and $\pi$. Then the number $m$ of zeros from $(-\pi,\pi]$, where $P_n$ changes its sign, is even. Denote the corresponding zeros by
    $\be_k,\,1\le k\le m$, where
    \ba
    -\pi=\be_0<\be_1<\ldots<\be_{m/2}<0<\be_{m/2+1}<\ldots<\be_m<\be_{m+1}=\pi.
    \ea
    Next, by Lemma \ref{E6.1}, the following equalities are valid for any $Q_n\in\TT_n$:
    \bna\label{E6.4}
    \norm{P_n}_1^* Q_n'(0)
    &=&\int_{-\pi}^{\pi} \sign(P_n(x))\, Q'_n(x)dx \\
    &=& \sign(P_n(\pi))\sum_{k=0}^{m} (-1)^k \int_{\beta_k}^{\beta_{k+1}}Q_n'(x)dx\nonumber\\
    &=& \sign(P_n(\pi))\sum_{k=0}^{m} (-1)^k \left(Q_n(\beta_{k+1})-Q_n(\beta_k)\right)\nonumber\\
    &=&2\sign(P_n(\pi))\sum_{k=1}^{m} (-1)^{k+1} Q_n(\beta_{k}).
    \nonumber
     \ena
     Furthermore, we show that $m=2n$. Indeed, let us assume that $m/2<n$. Then the polynomial
     \ba
        Q_{n,0}(x):=\sin x\prod_{k=1}^{m/2}(\cos x-\cos \be_k)
    \ea
    belongs to $\TT_n$ and $Q_{n,0}^\prime(0)=\prod_{k=1}^{m/2}(1-\cos \be_k) \neq 0$. On the other hand,
    $Q_{n,0}(\beta_k)=0$ for $1\leq k\leq m$; therefore,  $Q'_{n,0}(0)=0$ by \eqref{E6.4}. This contradiction shows that
    $m=2n$, i.e., all zeros $\alpha_k=\be_k,\,1\le k\le 2n$, of $P_n$ are real and simple.
    Finally, \eqref{E6.3} follows from \eqref{E6.4}.
    \end{proof}
    Note that for $s=0$ and a special class of even trigonometric polynomials, property (a) was proved in
    \cite[Lemma 6]{AD2022}.

    \vspace{.2cm}
    \section{Appendix: Discrete Hardy Spaces}\label{S7}

First, we recall certain facts from \cite{E1995}  about discrete Hardy spaces.

\begin{definition} (Eoff \cite[p. 509]{E1995})\label{D7.1}
    For a complex-valued sequence $a=(a_k)_{k\in\mathbb{Z}}\in\ell_p(\Z),\,p\in(0,\iy)$, let us define
    \ba
    H(a)_m:=\sum_{k\in \mathbb{Z},k\ne m} \frac{a_k}{m-k},\quad m\in\Z;\qquad
    H(a):=(H(a)_m)_{m\in\mathbb{Z}}.
    \ea

    For $p\in (0,\iy)$, the discrete Hardy space $H_p(\mathbb{Z})$ is the space of all $a\in \ell _p(\Z)$ for which $H(a)\in \ell_p(\Z)$ with the quasi-norm
    \ba
    \norm{a}_{H_p(\Z)} := \norm{a}_{\ell_p(\Z)}+ \norm{H(a)}_{\ell_p(\Z)}.
    \ea
\end{definition}
In addition, for  a complex-valued sequence $a=(a_k)_{k\in \mathbb{Z}}$, let us  define
    \beq\label{E7.1a}
    f_a(x)=\sum_{k\in \mathbb{Z}} (-1)^k a_k \sinc(x-\pi k).
    \eeq
Then the following result holds:
\begin{theorem}\label{T7.2}
The following statements are valid:
    \begin{itemize}
        \item[(a)] for $p\in(1,\iy)$, the map $a\mapsto f_a$ is an isomorphism between $\ell_p(\mathbb{Z})$ and $E_p$;
        \item[(b)] for $p\in(0,1]$, the map $a\mapsto f_a$ is an isomorphism between $H_p(\mathbb{Z})$ and $E_p$.
    \end{itemize}

\end{theorem}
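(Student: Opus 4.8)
The plan is to reduce the statement to three facts about the map $a\mapsto f_a$ and then assemble them. It is convenient throughout to use $\sin(x-\pi k)=(-1)^k\sin x$, which recasts the defining series as $f_a(x)=\sin x\sum_{k\in\Z}\frac{a_k}{x-\pi k}$; together with $\widehat{\sinc}=\pi\mathbf 1_{[-1,1]}$ this gives $\widehat{f_a}(\xi)=\pi\mathbf 1_{[-1,1]}(\xi)A(\pi(\xi+1))$, where $A(\theta):=\sum_k a_k e^{-ik\theta}$, so $f_a$ is entire of exponential type $\le 1$ and, sampling at $\pi\Z$, $(-1)^m f_a(\pi m)=a_m$. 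This last identity already makes $a\mapsto f_a$ injective, with the only candidate inverse $f\mapsto\big((-1)^k f(\pi k)\big)_k$, and by Plancherel it gives $\|f_a\|_2=\sqrt\pi\,\|a\|_{\ell_2}$, the anchor case. It then remains to prove: (i) the direct bound $\|f_a\|_p\lesssim_p\|a\|$ (with $\|\cdot\|=\|\cdot\|_{\ell_p}$ for $1<p<\infty$ and $\|\cdot\|=\|\cdot\|_{H_p(\Z)}$ for $0<p\le1$); (ii) the reverse bound $\big\|\big((-1)^kf(\pi k)\big)_k\big\|\lesssim_p\|f\|_p$ for $f\in E_p$; and (iii) that every $f\in E_p$ equals $f_a$ with $a=\big((-1)^kf(\pi k)\big)_k$ — which, with (ii), yields surjectivity onto $E_p$.

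For (i) I would split $\R=\bigsqcup_{m\in\Z}\big[\pi m-\tfrac\pi2,\pi m+\tfrac\pi2\big]$ and, on the $m$-th interval, write
\[
f_a(\pi m+t)=(-1)^m\sin t\Big(\tfrac{a_m}{t}+\tfrac1\pi H(a)_m+R_m(t)\Big),
\]
where $H$ is exactly the discrete Hilbert transform of Definition \ref{D7.1}, and, Taylor‑expanding $(\pi j+t)^{-1}$, the remainder obeys $|R_m(t)|\lesssim\big(|a|*e\big)_m$ for $|t|\le\tfrac\pi2$ with $e_j=|j|^{-2}$. Raising to the $p$‑th power, integrating in $t$ and summing in $m$ gives $\|f_a\|_p^p\lesssim_p\|a\|_{\ell_p}^p+\|H(a)\|_{\ell_p}^p+\big\||a|*e\big\|_{\ell_p}^p$. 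When $1<p<\infty$ the middle term is $\lesssim_p\|a\|_{\ell_p}^p$ because the discrete Hilbert transform is bounded on $\ell_p$, and the last term by Young's inequality, which proves (i); for (ii) one uses the Plancherel–Pólya inequality $\sum_m|f(\pi m)|^p\lesssim_p\|f\|_p^p$ (valid for $f\in E_1$ and any $p>0$, since the intervals $[\pi m-1,\pi m+1]$ are disjoint and $|f(\pi m)|^p\lesssim\int_{|x-\pi m|\le1}|f|^p$ by subharmonicity of $|f|^p$); and (iii) is the Whittaker–Shannon reconstruction $f=\sum_k f(\pi k)\sinc(\cdot-\pi k)$, which holds for $f\in E_p$, $1\le p<\infty$ (for $p\le2$ it follows from $E_p\subset E_2$). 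This settles part (a).

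For $0<p\le1$ the same cell decomposition yields $\|f_a\|_p^p\lesssim_p\|a\|_{\ell_p}^p+\|H(a)\|_{\ell_p}^p+\big\||a|*e\big\|_{\ell_p}^p$, and now the first two terms are $\le\|a\|_{H_p(\Z)}^p$ by definition; but the remainder term is no longer controlled by $\|a\|_{\ell_p}$ once $p\le\tfrac12$ (as $e\notin\ell_p$). Here I would pass to the atomic decomposition of Theorem \ref{T7.3}: by $p$‑subadditivity of $\|\cdot\|_p^p$ it suffices to show $\|f_a\|_p\lesssim_p1$ for a single $H_p$‑atom $a$ — supported in an interval of length $2N$, with $\|a\|_\infty\lesssim N^{-1/p}$ and vanishing moments up to order $\lfloor1/p-1\rfloor$. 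On the region within distance $\lesssim N$ of the support I would use the cell estimate together with $\|a\|_{\ell_p},\|H(a)\|_{\ell_p}\lesssim1$ (and an atomic bound for $|a|*e$), while far from the support I would subtract from $(x-\pi k)^{-1}$ its Taylor polynomial of degree $\lfloor1/p-1\rfloor$ in $k$ and use the vanishing moments to obtain $|f_a(x)|\lesssim N^{\lfloor1/p-1\rfloor+2-1/p}\,|x|^{-\lfloor1/p-1\rfloor-2}$, which is $p$‑integrable. This is the same bookkeeping already carried out in the proof of Lemma \ref{L3.6} for the companion integral $\int_{-1}^1|\xi|^s|\widehat{f_a}(\xi)|\,d\xi$, so I would carry it over with the obvious changes.

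The genuine obstacle is (ii) for $p\le1$: that the sample sequence of a band‑limited $L_p$ function lies in $H_p(\Z)$, i.e. $\|H(a)\|_{\ell_p}\lesssim_p\|f\|_p$ with $a_k=(-1)^kf(\pi k)$ — exactly the point where the Hardy‑space structure cannot be avoided, since the discrete Hilbert transform is unbounded on $\ell_p$ for $p\le1$. I would route it through the real Hardy space $\mathcal H_p(\R)$: first, a band‑limited $f\in L_p(\R)$ with $p\le1$ automatically lies in $\mathcal H_p(\R)$ with $\|f\|_{\mathcal H_p}\asymp_p\|f\|_{L_p}$, because band‑limitedness forces the grand maximal function of $f$ to be dominated by a local $L_q$‑average ($q<p$, via a Nikolskii‑type local estimate) and that averaging operator is bounded on $L_p$; second, the sampling map is bounded from the band‑limited subspace of $\mathcal H_p(\R)$ into $H_p(\Z)$ — the substantive part of Eoff's result — which one checks on atoms, a band‑limited $\mathcal H_p$‑atom sampling to an $H_p(\Z)$‑molecule whose quasinorm is governed by the same moment/decay estimates as above. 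Finally, combining (i), (ii) and the reconstruction (iii) (valid for $E_p$, $p\le1$, again via $E_p\subset E_2$) with injectivity shows that $a\mapsto f_a$ is a bounded bijection of $H_p(\Z)$ onto $E_p$ with bounded inverse $f\mapsto\big((-1)^kf(\pi k)\big)_k$; together with part (a) this proves the theorem. (One could instead, once (i) and surjectivity are available, invoke the open mapping theorem for $F$‑spaces to get the bounded inverse for free, but surjectivity itself still needs the sample sequence to lie in $H_p(\Z)$, so the obstacle is the same.)
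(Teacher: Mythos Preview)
The paper does not prove Theorem \ref{T7.2}; immediately after the statement it records that part (a) is due to Plancherel and P\'olya \cite{PP1937} and part (b) to Eoff \cite[Theorem 6]{E1995}, and uses the result as a black box. So there is no ``paper's own proof'' to compare your proposal against.

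That said, your sketch is a fair outline of how those cited proofs go. For (a), the cell decomposition plus $\ell_p$-boundedness of the discrete Hilbert transform, the Plancherel--P\'olya sampling inequality, and Shannon reconstruction are exactly the ingredients in \cite{PP1937,E1995}. For (b), your direct bound via the atomic decomposition of $H_p(\Z)$ and the far-field decay from vanishing moments is in line with the argument the paper itself carries out for the companion estimate in Lemma \ref{L3.6} and Proposition \ref{P7.4}. Your identification of step (ii) for $p\le 1$ as the real obstacle is correct, and routing it through $\mathcal H_p(\R)$ is indeed Eoff's strategy; note, however, that your justification that a band-limited $L_p$ function lies in $\mathcal H_p(\R)$ via a ``Nikolskii-type local estimate'' controlling the grand maximal function is the nontrivial content here and would need to be made precise (in Eoff's paper this is handled by identifying $E_p$ with a retract of $\mathcal H_p(\R)$ via a Fourier-side projection). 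Also, checking the sampling map ``on atoms'' cannot be done directly on $\mathcal H_p(\R)$-atoms, since those are compactly supported and hence not band-limited; one has to work with band-limited molecules or with the maximal-function characterization instead. These are exactly the points where your proposal defers to ``the substantive part of Eoff's result,'' so the sketch is accurate about where the work lies even if it does not supply it.
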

Statement (a) of Theorem \ref{T7.2} was proved by Plancherel and P\'{o}lya \cite{PP1937}
(see also \cite[Theorem 5]{E1995}), while statement (b)
was established by Eoff \cite[Theorem 6]{E1995}.

Second, we state a result about the structure of the discrete Hardy spaces.

\begin{definition} (Boza and Carro \cite[Definition 3.9]{BC1998})\label{D7.2a}
Let $p\in(0,1]$. We say that a complex-valued sequence $a=(a_k)_{k\in \mathbb{Z}}$ is an $H_p$-atom if
                   it satisfies the following properties:
                   \begin{enumerate}
              \item[1)] $\supp a$ is contained in an interval $I$ with the length $\vert I\vert \geq 1$;
              \item[2)] $\norm{a}_{\ell_\infty(\Z)} \leq  |I|^{-\frac{1}{p}}$;
              \item[3)] $\sum_{k \in \mathbb{Z}} a_k k^j=0$ for any integer $j$ such that $0\leq j \leq p^{-1} -1$.
              \end{enumerate}
\end{definition}

In particular, it immediately follows from properties 1) and 2) that for an $H_p$-atom $a$,
\beq\label{E7.1a1}
\norm{a}_{\ell_p(\Z)} \lesssim_p 1, \qquad p\in(0,1].
\eeq

Then the following atomic description for $H_p(\Z)$ holds:

\begin{theorem} (Boza and Carro \cite[Theorems 3.10 and 3.14]{BC1998})\label{T7.3}
           Let $p\in(0,1]$.         Then for any sequence $a\in H_p(\Z)$ we have
          $\norm{a}_{H_p(\Z)}\asymp_p \inf \norm{\lambda}_{\ell_p(\Z_+)}$,
          where $\lambda=\left(\lambda_d\right)_{d\in\Z_+}\in \ell_p(\Z_+)$ and
          the infimum is taken over all possible representations $a=\sum_{d=0}^\iy \lambda_d a(d)$ as a linear combination
of $H_p$-atoms $\{a(d)\}_{d=0}^\iy$.
\end{theorem}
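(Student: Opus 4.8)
The statement is a two-sided equivalence, so the plan is to prove the two estimates separately. For the bound $\|a\|_{H_p(\Z)}\lesssim_p\inf\|\la\|_{\ell_p(\Z_+)}$, the plan is to show first that every $H_p$-atom $a(d)$ satisfies $\|a(d)\|_{H_p(\Z)}\lesssim_p 1$ with a constant depending only on $p$, and then, using that $\|\cdot\|_{\ell_p(\Z)}$ is $p$-subadditive for $p\le 1$ and that $H$ is linear, to conclude that for $a=\sum_d\la_d a(d)$ one has $\|a\|_{H_p(\Z)}\le 2\bigl(\sum_d\|\la_d a(d)\|_{H_p(\Z)}^p\bigr)^{1/p}\lesssim_p\|\la\|_{\ell_p(\Z_+)}$. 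The bound $\|a(d)\|_{\ell_p(\Z)}\lesssim_p 1$ is already recorded in \eqref{E7.1a1}, so the remaining point is $\|H(a(d))\|_{\ell_p(\Z)}\lesssim_p 1$. Here I would split $\Z$ into a fixed dilate $2I$ of the interval $I$ carrying $a(d)$ and its complement: on $2I$ one uses the $\ell_2$-boundedness of $H$ together with H\"older's inequality for sequences and $|I|\ge 1$; off $2I$ one replaces $\tfrac1{m-k}$ by its Taylor polynomial in $k$ about the centre of $I$ and invokes the cancellation $\sum_k a(d)_k k^j=0$, $0\le j\le p^{-1}-1$, to see that $|H(a(d))_m|$ decays like $\dist(m,I)^{-1-\lfloor p^{-1}-1\rfloor-1}$ times an appropriate power of $|I|$, the exponent being large enough (because the number of vanishing moments exceeds $p^{-1}-1$) that the sum over $m\notin 2I$ is $\lesssim_p 1$.

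For the reverse estimate I would follow the classical route to atomic decompositions of Hardy spaces, transcribed to $\Z$. First I would establish a maximal-function description of $H_p(\Z)$: there is a sequence-valued grand maximal function $a\mapsto a^\ast$, built from convolutions of $a$ with dilates of a fixed smooth, compactly supported kernel, such that $\|a^\ast\|_{\ell_p(\Z)}\asymp_p\|a\|_{H_p(\Z)}$. Granting this, fix $a\in H_p(\Z)$, set $\Omega_k:=\{m\in\Z:\ a^\ast(m)>2^k\}$, and decompose each $\Omega_k$ into its maximal blocks of consecutive integers, further subdivided into Whitney-type intervals $I$ with $\dist(I,\Z\setminus\Omega_k)\asymp|I|$. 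Writing $a$ as the telescoping sum, over $k$, of its good/bad truncations at the heights $2^k$ and regrouping by the intervals $I$, one obtains a representation $a=\sum_{k,I}(\text{pieces})$ in which, after subtracting from each piece the unique polynomial of degree $\le\lfloor p^{-1}-1\rfloor$ (in the index variable) matching its moments on $I$, the correction is, up to a normalising factor $\asymp_p 2^k|I|^{1/p}$, an $H_p$-atom in the sense of Definition \ref{D7.2a}; the leftover polynomial terms recombine across scales into further atoms with the same type of control. Taking the normalising factors as the coefficients $\la_d$ and using the Whitney structure with $\sum_{I\subset\Omega_k}|I|\asymp|\Omega_k|$ gives $\|\la\|_{\ell_p(\Z_+)}^p\lesssim_p\sum_k 2^{kp}|\Omega_k|\asymp\|a^\ast\|_{\ell_p(\Z)}^p\asymp_p\|a\|_{H_p(\Z)}^p$, as required. (An alternative is to transfer the problem to $\R$ through the isomorphism $H_p(\Z)\cong E_p$ of Theorem \ref{T7.2}(b) and appeal to the atomic theory of real Hardy spaces; one then has to be careful that band-limited $L_p$-functions carry the right cancellation, so a notion of $H_p$-atom adapted to $E_p$ must be isolated first, after which the discrete atoms are recovered by sampling at the integers.)

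The main obstacle is precisely the maximal-function characterisation of $H_p(\Z)$ invoked at the start of the second part: identifying the quasi-norm defined through the discrete Hilbert transform $H$ with one defined through a grand maximal function is the genuinely analytic input, a discrete counterpart of the Fefferman--Stein and Coifman--Weiss theory of real Hardy spaces, and it is where the hypothesis $p\le 1$ and the full force of Definition \ref{D7.1} are used. Once that equivalence is available, the Calder\'on--Zygmund/Whitney decomposition and the accounting of the coefficients $\la_d$ are routine, as is the uniform boundedness of atoms needed in the first part.
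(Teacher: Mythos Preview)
The paper does not supply its own proof of Theorem~\ref{T7.3}: the result is quoted from Boza and Carro \cite[Theorems~3.10 and~3.14]{BC1998} and used as a black box (in the proofs of Lemma~\ref{L3.5} and Proposition~\ref{P7.4}). There is therefore nothing in the paper to compare your proposal against directly.

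That said, your outline is the standard route to atomic decompositions and is consistent with how such results are proved in the literature. Your argument for the uniform bound $\|H(a(d))\|_{\ell_p(\Z)}\lesssim_p 1$ on atoms---splitting into a near part controlled via $\ell_2$-boundedness plus H\"older, and a far part controlled via Taylor expansion and the moment conditions---is exactly the technique the present paper carries out for the shifted operator $H_c$ in the proof of Proposition~\ref{P7.4} (see \eqref{E7.2c}--\eqref{E7.3a}); so that half of your sketch is on firm ground and matches the paper's own toolkit. For the converse, you correctly identify the substantive step as the maximal-function characterisation of $H_p(\Z)$; this is indeed the core of the Boza--Carro argument, and the subsequent Calder\'on--Zygmund/Whitney bookkeeping is routine once it is in hand. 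Your alternative via the isomorphism with $E_p$ is also viable in principle, but circular in the context of this paper, since the paper's proof of that isomorphism (via Proposition~\ref{P7.4}) already invokes Theorem~\ref{T7.3}.
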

Note that Definitions \ref{D7.1}, \ref{D7.2a} and Theorems \ref{T7.2}, \ref{T7.3} are used in Lemma \ref{L3.6} and in the proof of Lemma \ref{L3.5}.

Third, we define
\ba
    &&H_c(a)_m:=\sum_{k\in \mathbb{Z}} \frac{a_k}{m-k+1/2},\quad m\in\Z;\qquad
    H_c(a):=(H_c(a)_m)_{m\in\mathbb{Z}};\\
    &&\norm{a}_{H_p(\Z)}^* := \norm{a}_{\ell_p(\Z)}+ \norm{H_c(a)}_{\ell_p(\Z)},
    \ea
and show that
     the operator $H_c(a)$ is equivalent to $H(a)$ in the definition of the Hardy space for $p\in(0,1]$.
     This statement is not trivial but
     Eoff \cite[p. 509]{E1995} states this fact without a proof, while using $H_c(a)$ in the proof of statement (b)
      of Theorem \ref{T7.2} as well as in the equivalent definition of $H_p(\mathbb{Z})$.
     That is why we need to prove the following result.
     \begin{proposition}\label{P7.4}
     For any $a\in H_p(\Z),\,p\in(0,1]$, the following relation holds:
     \beq\label{E7.1}
     \norm{a}_{H_p(\Z)}^*\asymp_p \norm{a}_{H_p(\Z)}.
     \eeq
     \end{proposition}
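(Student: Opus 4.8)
The goal is to show that the two discrete Hardy-space quasinorms built from the operators $H$ and $H_c$ are equivalent on $H_p(\Z)$ for $p\in(0,1]$. Since $\norm{a}_{\ell_p(\Z)}$ appears in both quasinorms, it suffices to prove $\norm{H_c(a)}_{\ell_p(\Z)}\lesssim_p \norm{a}_{H_p(\Z)}$ and $\norm{H(a)}_{\ell_p(\Z)}\lesssim_p \norm{a}_{H_p(\Z)}^*$. I would try to reduce the difference $H_c(a)-H(a)$ to a bounded operator on $\ell_p$, or rather to an operator whose $\ell_p\to\ell_p$ norm is controlled. Writing out the $m$-th entry,
\[
H_c(a)_m-H(a)_m
=\frac{a_m}{m-m+1/2}+\sum_{k\ne m}a_k\left(\frac{1}{m-k+1/2}-\frac{1}{m-k}\right)
=2a_m-\sum_{k\ne m}\frac{a_k}{(m-k+1/2)(m-k)}.
\]
The kernel $K(m,k):=\bigl[(m-k+1/2)(m-k)\bigr]^{-1}$ for $k\ne m$ decays like $|m-k|^{-2}$, so the associated convolution operator has an absolutely summable kernel; hence it maps $\ell_p(\Z)\to\ell_p(\Z)$ with norm $\lesssim_p\|K\|_{\ell_p}<\infty$ (using the quasi-triangle inequality $\|\sum b_j\|_{\ell_p}^p\le\sum\|b_j\|_{\ell_p}^p$ for $p\le1$ applied to the decomposition by the value of $m-k$). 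Combined with the trivial bound $\|2a\|_{\ell_p}=2\|a\|_{\ell_p}$, this gives
\[
\norm{H_c(a)-H(a)}_{\ell_p(\Z)}\lesssim_p \norm{a}_{\ell_p(\Z)},
\]
and the equivalence \eqref{E7.1} follows immediately by the quasi-triangle inequality in both directions, since $\norm{a}_{\ell_p(\Z)}$ is already a summand of each quasinorm.

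**Execution order.** First I would record the pointwise identity for $H_c(a)_m-H(a)_m$ displayed above, being careful that the $k=m$ term of $H_c$ contributes exactly $2a_m$ and the sums over $k\ne m$ combine into the single $|m-k|^{-2}$-type kernel. Second, I would estimate $\bigl\|(K(\cdot,0))_{j\ne0}\bigr\|_{\ell_p(\Z)}$: since $|K(j+1/2\text{-shift})|\asymp j^{-2}$ for large $|j|$ and there are only finitely many small-$|j|$ terms (none of which blow up, as the denominators $j+1/2$ and $j$ never vanish simultaneously for integer $j\ne0$), this $\ell_p$-norm is a finite constant $C_p$ because $2p>1$ is not even needed — $jp$-summability of $j^{-2p}$ holds for all $p>0$ with $2p>1$, and for $p\le1/2$ one still has $\sum_j j^{-2p}<\infty$ precisely when $2p>1$; I should double-check this, and if $p\le1/2$ forces $2p\le1$ I would instead use that $K$ is in $\ell_1$ (which it always is, $\sum j^{-2}<\infty$) together with the embedding $\ell_1\hookrightarrow\ell_p$-type convolution bound, i.e.\ Young's inequality in the form $\|b*a\|_{\ell_p}\le\|b\|_{\ell_1}^{?}\cdots$ — more safely, decompose $a=\sum_k a_k e_k$, note $K*a=\sum_k a_k (K*e_k)$, and use $\|\sum_k a_k(K*e_k)\|_{\ell_p}^p\le\sum_k|a_k|^p\|K\|_{\ell_p}^p$, which requires only $\|K\|_{\ell_p}<\infty$, equivalently $\sum_{j\ne0}|j|^{-2p}<\infty$, i.e.\ $p>1/2$. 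Third, for $p\in(0,1/2]$ I would refine this: the kernel actually decays like $|j|^{-2}$, and one can split $K=K_{\text{near}}+K_{\text{far}}$ and handle $K_{\text{far}}$ by summing in dyadic blocks, or simply observe that for $p\le 1$ the correct statement is Young's convolution inequality $\ell_1*\ell_p\subset\ell_p$ which does hold for $0<p\le\infty$ (it follows from $|(b*a)_m|\le\sum_k|b_{m-k}||a_k|$ and Minkowski/quasi-Minkowski). Fourth, assemble: $\norm{H_c(a)}_{\ell_p}^p\le\norm{H(a)}_{\ell_p}^p+\norm{H_c(a)-H(a)}_{\ell_p}^p\lesssim_p\norm{a}_{H_p(\Z)}^p$, and symmetrically with the roles of $H$ and $H_c$ reversed; then add $\norm{a}_{\ell_p}$ to both sides.

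**Main obstacle.** The only real subtlety is the convolution estimate for the difference kernel when $p$ is small. For $p>1/2$ the kernel $j\mapsto |j|^{-2}$ lies in $\ell_p(\Z)$ and the bound $\|K*a\|_{\ell_p}\lesssim_p\|K\|_{\ell_p}\|a\|_{\ell_p}$ is the clean quasi-Banach analogue of Young's inequality that follows from decomposing $a$ over its support. For $p\le1/2$ one cannot use $\|K\|_{\ell_p}$ and must instead invoke Young's inequality $\ell^1*\ell^p\subset\ell^p$ (valid for all $p\in(0,\infty]$), which applies because $K\in\ell^1(\Z)$ always. So the proof bifurcates mildly on whether $2p>1$, but in both branches the estimate is elementary; I would present the $\ell^1$-convolution argument uniformly, since $K\in\ell^1$ unconditionally, thereby avoiding the case split entirely. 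No deep input is needed — the proposition is exactly the routine verification that Eoff omitted, and the $|j|^{-2}$ decay of $\tfrac{1}{j+1/2}-\tfrac1j$ is what makes it work.
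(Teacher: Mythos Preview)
Your kernel-difference approach is correct and elementary for $p\in(1/2,1]$: the identity
\[
H_c(a)_m-H(a)_m=2a_m-\tfrac12\sum_{k\ne m}\frac{a_k}{(m-k)(m-k+1/2)}
\]
together with the quasi-Banach convolution bound $\|K*a\|_{\ell_p}\le\|K\|_{\ell_p}\|a\|_{\ell_p}$ (valid for $p\le1$ by the $p$-triangle inequality applied to $\sum_j K(j)\tau_j a$) finishes the job, since $K(j)\asymp j^{-2}\in\ell_p$ when $2p>1$.

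The gap is in the range $p\in(0,1/2]$. Your fallback, the assertion that Young's inequality $\ell^1*\ell^p\subset\ell^p$ holds for all $p>0$, is false for $p<1$: take $a=e_0$ and any $b\in\ell^1\setminus\ell^p$; then $b*a=b\notin\ell^p$. In fact, for $p\le1$ a convolution operator $a\mapsto K*a$ is bounded on $\ell^p$ \emph{iff} $K\in\ell^p$, and your kernel $K(j)\asymp j^{-2}$ fails this for $p\le1/2$. Concretely, with $a=e_0$ one has $H_c(a)_m-H(a)_m=-\tfrac12\bigl(m(m+1/2)\bigr)^{-1}\notin\ell^p$ for $p\le1/2$, so the estimate $\|H_c(a)-H(a)\|_{\ell_p}\lesssim_p\|a\|_{\ell_p}$ that you rely on is simply not true in this range. (This is not a counterexample to the proposition, since $e_0\notin H_p(\Z)$; but it shows your \emph{argument} cannot work without using more than $\|a\|_{\ell_p}$.)

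The paper handles the small-$p$ case by bringing in exactly the extra structure of $H_p(\Z)$ that your approach discards. For $\|H_c(a)\|_{\ell_p}\lesssim_p\|a\|_{H_p(\Z)}$ it uses the atomic decomposition (Theorem~\ref{T7.3}): for an $H_p$-atom supported in $[-N,N]$, the moment conditions $\sum a_kk^j=0$ for $0\le j\le\lfloor 1/p-1\rfloor$ force the extra decay $|H_c(a)_m|\lesssim_p N^{\lfloor1/p\rfloor+1-1/p}\,m^{-\lfloor1/p\rfloor-1}$ for $|m|>2N$, which \emph{is} $p$-summable; near the support one uses the $\ell_2$-boundedness of $H_c$ and H\"older. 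For the reverse inequality the paper passes through the Paley--Wiener function $f_a$, the Plancherel--P\'olya sampling inequalities, and Bernstein's inequality. So for $p\le1/2$ the cancellation encoded in the atom's vanishing moments is essential, and an argument that sees only $\|a\|_{\ell_p}$ cannot succeed.
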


          \begin{proof}
          Note first that the proof of the $\gtrsim$ part of the proposition is based on
          the following discretization inequalities of Plancherel and P\'{o}lya
          \cite[p. 110]{PP1937} (see also
          \cite[Theorems 2 (ii) and 4 (i)]{E1995}):
          \bna
          &&\left(\sum_{n\in\Z} |f(\pi n)|^p \right)^{{1}/{p}}
          \lesssim_p \norm{f}_p,\qquad  f\in E_{p},\quad p>0;\label{E7.1b}\\
          &&\left(\sum_{n\in\Z} |f(\pi n)|^p \right)^{{1}/{p}}
          \gtrsim_{p,\tau} \norm{f}_p,\qquad
          \left(f(\pi n)\right)_{n\in \Z}\in \ell_p(\Z),
          \quad p>0;\label{E7.1c}
          \ena
          where $f$ in \eqref{E7.1c}
           is an entire function of exponential type $\tau\in (0,1)$.

          Assume that $\norm{a}_{H_p(\Z)}^*<\iy$.
          Then $f_a$, defined by \eqref{E7.1a}, belongs to $E_p$ and
          \beq\label{E7.1d}
          \left\|f_a(\cdot/2)\right\|_p\lesssim_p \norm{a}_{H_p(\Z)}^*.
          \eeq
          Indeed, since $a\in \ell_2(\Z)$, then $\left\|f_a\right\|_2<\iy$ and $\mbox{supp}\, \widehat{f_a}\in[-1,1]$. Therefore, $f_a(\cdot/2)\in E_{1/2,2}$.
          In addition,
          \ba
          f_a(\pi n/2)=\left\{\begin{array}{ll}
          (-1)^m a_m,&n=2m,\\
          (-1)^m\pi^{-1}H_c(a)_m,& n=2m+1,
          \end{array}\right.,\qquad m\in\Z.
          \ea
          Hence
          $\left(\sum_{n\in\Z}\left\vert f_a(\pi n/2)\right\vert^p\right)^{1/p}\lesssim_p
          \norm{a}_{H_p(\Z)}^*$.
          Thus \eqref{E7.1d} follows from \eqref{E7.1c}
          for $\tau=1/2$ and $f=f_a(\cdot/2)$ and, in addition, $f_a\in E_p$
          (cf. \cite[Proof of Theorem 6]{E1995}).

Next, using Bernstein's inequality (see \eqref{E1.2} for $s=1$ and $p=q$) and, in addition,
applying inequalities \eqref{E7.1b} and \eqref{E7.1d} to $f_a^\prime$ and $f_a$, respectively, we have

              \beq\label{E7.2}
              \norm{H(a)}_{\ell_p(\Z)} = \pi\left(\sum_{m\in\Z} |f^\prime_a(\pi m)|^p \right)^{{1}/{p}}
              \lesssim_p \norm{f_a^\prime}_p
              \lesssim_p \norm{f_a}_p
              \lesssim_p \norm{a}^*_{H_p(\Z)}.
              \eeq

              To prove the $\lesssim$ part of the proposition, we first note that the following estimate for the norm of $H_c(a)$ established in \cite[Lemme 8, p. 136]{PP1937}:
               \beq\label{E7.2a}
               \norm{H_c(a)}_{\ell_p(\Z)}\lesssim_p \norm{a}_{\ell_p(\Z)},\qquad p>1,
               \eeq
               is not valid for $p\in(0,1]$. That is why
              we first estimate $\norm{H(a)}_{\ell_p(\Z)},\,p\in(0,1],$ for  an $H_p$-atom $a$ (see Definition \ref{D7.2a}) and then discuss any sequence from $H_p(\Z)$.
              Without loss of generality we can assume that $\mbox{supp}\, a\subseteq [-N,N]\cap\Z,\,N\in\N$,
              with
              \bna
              &&\norm{a}_{\ell_\iy(\Z)}\lesssim_p N^{-1/p},\label{E7.2b}\\
              &&\sum_{k=-N}^N a_k k^j=0,\qquad 0\leq j \leq j_0,\quad j_0:=\lfloor 1/p-1\rfloor.\label{E7.3c}
              \ena
              Then using H\"{o}lder's inequality, estimate \eqref{E7.2a} for $p=2$,
              and \eqref{E7.2b}, we obtain
              \beq\label{E7.2c}
               \sum_{m=-2N}^{2N} |H_c(a)_m|^p
               \lesssim_p N^{1-  {p}/{2}}  \norm{H_c(a)}_{\ell_2(\Z)}^p
               \lesssim N^{1-  {p}/{2}}  \norm{a}_{\ell_2(\Z)}^p
               \lesssim_p 1.
              \eeq
              Next, we show that
               \beq\label{E7.2d}
            \sum_{\vert m\vert=2N+1}^\infty |H_c(a)_m|^p \lesssim_p 1.
            \eeq
  Indeed, using \eqref{E7.3c}, we obtain for $m\geq 2N+1$,
\ba
      H_c(a)_m&=&\sum_{k=-N}^N\frac{a_k}{m -(k- 1/2)}
              = m^{-1} \sum_{k=-N}^N  a_k  \sum_{j=0}^\infty \left(\frac{k-1/2}{m}\right)^j\\
              &=&m^{-1} \sum_{j=0}^\infty \sum_{k=-N}^N  a_k   \left(\frac{k-1/2}{m}\right)^j
               =m^{-1} \sum_{j=j_0+1}^\infty \sum_{k=-N}^N  a_k   \left(\frac{k-1/2}{m}\right)^j\\
              &=&  m^{-1}  \sum_{k=-N}^N  a_k
                 \sum_{j=j_0+1}^\infty\left(\frac{k-1/2}{m}\right)^j
               =  m^{-j_0-1}\sum_{k=-N}^N  \frac{a_k}{m -(k- 1/2)} (k-1/2)^{j_0+1},
\ea
where the series above are convergent.
            Hence, taking account of \eqref{E7.2b}, we have
            \beq\label{E7.2e}
            \left\vert H_c(a)_m\right\vert \lesssim_p m^{-j_0-2} N^{j_0+2-1/p},\qquad m\geq 2N+1,\quad N\in\N.
            \eeq
            It follows from \eqref{E7.2e} that
            $\sum_{m=2N+1}^\infty |H_c(a)_m|^p \lesssim_p 1$
            since $p(j_0+2)=p(\lfloor 1/p\rfloor+1) > 1$. The estimate
             $\sum_{m=-2N-1}^{-\infty} |H_c(a)_m|^p \lesssim_p 1$
             can be proved similarly.
             Thus, \eqref{E7.2d} is established.
            Combining together \eqref{E7.2c} with \eqref{E7.2d}, we see that for an $H_p$-atom $a$,
            \beq\label{E7.3a}
            \norm{H_c(a)}_{\ell_p(\Z)}\lesssim_p 1.
            \eeq

             Furthermore, by Theorem \ref{T7.3},  any sequence $a\in H_p(\Z)$ has an atomic decomposition $a=\sum_{d=0}^\infty \lambda _d a(d)$, where $a(0),\,a(1),\ldots,$ are $H_p$-atoms,
 with $\norm{a}_{H_p(\Z)} \asymp_p \norm{\lambda}_{\ell_p(\Z_+)}$.
 In addition, we have by \eqref{E7.1a1} that
            $\sum_{d=0}^\iy \sum_{k\in\Z}|\lambda _d| |a_k(d)|\lesssim_p \norm{\lambda}_{\ell_1(\Z_+)}\leq \norm{\lambda}_{\ell_p(\Z_+)}<\infty$.
            Therefore,
            \beq\label{E7.3b}
            H_c\left(\sum_{d=0}^\infty \lambda_d a(d)\right)_m
            =\sum_{d=0}^\infty\lambda_d  H_c\left(a(d)\right)_m.
            \eeq
            Thus, using \eqref{E7.3a}, \eqref{E7.3b}, and Theorem \ref{T7.3} we arrive at the relations
            \beq\label{E7.4}
            \norm{H_c(a)}^p_{\ell_p(\Z)}
             \leq \inf_{\lambda}\sum_{d=0}^\iy \lambda_d ^p \norm{H_c(a(d))}_{\ell_p(\Z)}^p
             \lesssim_p \inf_{\lambda}\sum_{d=0}^\infty \lambda_d ^p
             \lesssim_p \norm{a}_{H_p(\Z)}^p ,
            \eeq
            where the infima in \eqref{E7.4} are taken over all possible representations
            $a=\sum_{d=0}^\iy \lambda_d a(d)$.
            Finally, \eqref{E7.1} follows from \eqref{E7.2} and \eqref{E7.4}.
          \end{proof}
          \noindent
\textbf{Acknowledgements.} We are grateful to Roald M. Trigub  for providing reference
\cite{BH2020} and to Kristina Oganesyan for her valuable remarks.

\end{document}